\newtheorem{thm}{Theorem}[section]
\newtheorem{prop}[thm]{Proposition}
\newtheorem{cor}[thm]{Corollary}
\newtheorem{lem}[thm]{Lemma}
\theoremstyle{remark}
\newtheorem{remark}[thm]{Remark}
\theoremstyle{definition}
\newtheorem{definition}[thm]{Definition}
\newcommand*\isom{\xrightarrow{\sim}}
\renewcommand*\env@matrix[1][*\c@MaxMatrixCols c]{%
  \hskip -\arraycolsep
  \let\@ifnextchar\new@ifnextchar
  \array{#1}}
\newcommand{\pair}[1]{\langle#1\rangle}
\newcommand{\divisor}{\operatorname{div}}
\newcommand{\bdivisor}{\operatorname{b-div}}
\newcommand{\ord}{\operatorname{ord}}
\newcommand{\Pic}{\operatorname{Pic}}
\newcommand{\Div}{\operatorname{Div}}
\newcommand{\cDiv}{\operatorname{c-Div}}
\newcommand{\cPic}{\operatorname{c-Pic}}
\newcommand{\bDiv}{\operatorname{b-Div}}
\newcommand{\bPic}{\operatorname{b-Pic}}
\newcommand{\SL}{\operatorname{SL}}
\newcommand{\Princ}{\operatorname{Prin}}
\newcommand{\Map}{\operatorname{Map}}
\newcommand{\Dir}{\operatorname{Vec}}
\newcommand{\Zh}{\operatorname{Zh}}
\def\qq{\mathbb{Q}}
\def\rr{\mathbb{R}}
\def\zz{\mathbb{Z}}
\def\cc{\mathbb{C}}
\def\mm{\mathcal{M}}
\def\HH{\mathbb{H}}
\def\ll{\mathcal{L}}
\def\bb{\mathcal{B}}
\def\I{\mathcal{I}}
\def\oo{\mathcal{O}}
\def\ee{\mathcal{E}}
\def\xx{\mathcal{X}}
\def\deldelbar{\partial \bar{\partial}}
\def\Ar{\mathrm{Ar}}
\def\omar{\omega_{\Ar}}
\def\opar{\oo(P)_{\Ar}}
\def\oqar{\oo(Q)_{\Ar}}
\def\d{\mathrm{d}}
\def\Lbar{\bar{L}}
\def\Ebar{\bar{E}}
\def\llbar{\bar{\ll}}
\def\mmbar{\bar{\mm}}
\def\sing{s}
\def\a{a}
\def\can{\mathrm{can}}
\numberwithin{equation}{section}
\newcounter{nootje}
\begin{document}

\title[Chern-Weil theory for the family Arakelov metric]{Chern-Weil theory for line bundles with the family Arakelov metric}

\author{Michiel Jespers and Robin de Jong}

\begin{abstract} We prove a result of Chern-Weil type for canonically metrized line bundles on one-parameter families of smooth complex curves. Our result generalizes a result due to J.I.~Burgos Gil, J.~Kramer and U.~K\"uhn that deals with a  line bundle of Jacobi forms on the universal elliptic curve over the modular curve with full level structure,  equipped with the Petersson metric. Our main tool, as in the work by Burgos Gil, Kramer and K\"uhn, is the notion of a b-divisor. 
\end{abstract}

\maketitle


\thispagestyle{empty}

\section{Introduction}

To motivate the results in this paper, we take as a starting point D.~Mumford's famous result \cite{hi} from the 1970s, that vector bundles on algebraic varieties equipped with so-called \emph{good} hermitian metrics satisfy Chern-Weil theory, and that automorphic vector bundles on pure, non-compact Shimura varieties, equipped with their canonical invariant metric, are good. 

More precisely, let $S$ denote a smooth connected complex algebraic variety, and let $S \hookrightarrow \xx$ denote a compactification of $S$ where $\xx$ is smooth, and where the boundary $\xx \setminus S$ is a normal crossings divisor on $\xx$. Let $\bar{E}=(E,\|\cdot\|)$ denote a smooth hermitian vector bundle on $S$, and assume that the metric $\|\cdot\|$ on $E$ is good in the sense of Mumford on $\xx$. The metric  on $E$ then determines a canonical extension $[\bar{E},\xx] $ of $E$ as a vector bundle over $\xx$, by asking that local sections have at most logarithmic growth across the boundary of $S$ in $\xx$. The formation of Mumford's canonical extension is easily seen to be compatible with birational morphisms $\xx' \to \xx$ of compactifications of $S$ that restrict to the identity on $S$.

The first main result of \cite{hi} is that for all non-negative integers $k$ the Chern form $c_k(\bar{E})$, viewed as a current (singular $(k,k)$-form)  on the compactification $\xx$, is integrable. More precisely, the cohomology class $[c_k(\bar{E})]$ of the current $c_k(\bar{E})$ on $\xx$ coincides with the Chern class $c_k([\Ebar,\xx])$ of the canonical extension of $\Ebar$ over $\xx$. As a special case, then, we note that when $\Lbar_1,\ldots,\Lbar_n$ are smooth hermitian  line bundles on $S$ which are good on $\xx$, with $n=\dim(S)$, one has the identity
\begin{equation} \label{cwtheory} \left[\Lbar_1,\xx\right] \cdots  \left[\Lbar_n,\xx\right]
= \int_S c_1(\Lbar_1) \wedge \ldots \wedge c_1(\Lbar_n) 
\end{equation}
in $\zz$. We refer to (\ref{cwtheory}) as a result of Chern-Weil type for  $\Lbar_1,\ldots,\Lbar_n$ on $S$. 

The second main result of \cite{hi} is that if $S$ is taken to be a pure, non-compact Shimura variety, with $S \hookrightarrow \xx$ a toroidal compactification of $\xx$, and with $\bar{E}$ an automorphic vector bundle on $S$ equipped with its canonical invariant metric, the metric on $\bar{E}$ is good on $\xx$. In particular, combining with the first main result  we see that the smooth hermitian vector bundle $\bar{E}$ satisfies Chern-Weil theory on $\xx$. 

It seems natural  to try to extend the equality in (\ref{cwtheory}) to a broader range of geometrically relevant situations, including cases where the metric on the open variety $S$ is not necessarily good on some compactification $\xx$. For example, in the recent paper \cite{bkk} by J.I.~Burgos Gil, J.~Kramer and U.~K\"uhn the following situation is considered. Let $N \geq 3$ be an integer, and take $S$ to be the complex algebraic surface $E=E(N)$, the universal elliptic curve over the open modular curve $Y(N)$ with full level-$N$ structure. Let $O$ denote the image in $E$ of the zero-section of $E$ over $Y(N)$. Let $\xx$ be the minimal regular model $\ee=\ee(N)$ of $E$ over the smooth completion $X(N)$ of $Y(N)$, and take $\Lbar$ to be the line bundle $L=\oo_E(8O)$ on $E$ equipped with the smooth hermitian metric that gives the global section $\vartheta_{1,1}^8$ of $L$ its standard Petersson norm $\|\vartheta_{1,1}\|^8$. Here $\vartheta_{1,1}$ denotes Riemann's elliptic theta function with characteristics $[1/2,1/2]$. 

In \cite{bkk} an analysis is carried out of the degeneration behavior of the metric on $\Lbar$  near the boundary divisor $B$ of $E$ in $\ee$. The divisor $B$ is a disjoint union of $N$-gons, and in particular is a normal crossings divisor on $\ee$. The analysis in \cite{bkk} shows that the smooth hermitian line bundle $\Lbar^{\otimes N}$ has logarithmic growth on $\ee$ only away from the set $\Sigma \subset \ee$ of singular points of $B$. 

By Hartogs's theorem, as $\Sigma$ has codimension two in $\ee$, the canonical Mumford extension of $\Lbar^{\otimes N}$ over $\ee \setminus \Sigma$ has a unique extension as a line bundle over $\ee$, so that we continue to have a canonical extension $[\Lbar^{\otimes N},\ee]$ of $L^{\otimes N}$ over $\ee$ dictated by the metric on $\Lbar$. This extension is baptized the \emph{Mumford-Lear extension} of $\Lbar^{\otimes N}$ over $\ee$ in \cite{bkk}, thus referring to both \cite{hi} and D.~Lear's PhD thesis \cite{lear}, where canonical extensions of canonically metrized line bundles over compactifications are studied in the context of asymptotic Hodge theory. 

A naive thought might now be, since $\Sigma$ is only of codimension two, and since we know Mumford's result for pure Shimura varieties, that Chern-Weil theory  (\ref{cwtheory}) might continue to hold in this case. Indeed, note that the surface $E$ is an example of a mixed Shimura variety, that the surface $\ee$ is a toroidal compactification of it, and that $\Lbar$ is an automorphic line bundle with canonical invariant metric on $E$. 

The crucial observation of \cite{bkk} is however that, perhaps surprisingly at first sight, the equality in (\ref{cwtheory}) fails to hold in this setting. To be precise, let $p_N$ denote the number of cusps of $X(N)$. Then in \cite[Proposition~4.10]{bkk} it is computed that 
\begin{equation} \label{selfint} \left[ \Lbar^{\otimes N}, \ee \right]\cdot
\left[ \Lbar^{\otimes N}, \ee \right] = \frac{16}{3} N(N^2+1)p_N  \, ,
\end{equation}
whereas the proof of \cite[Theorem~5.2]{bkk} shows that
\begin{equation} \label{integral}
\int_E c_1(\Lbar) \wedge c_1(\Lbar) = \frac{16}{3} Np_N  \, . 
\end{equation}
As for an explanation of this discrepancy, a closer study of the singularities of the metric on $\Lbar$ near the points in $\Sigma$ shows that the formation of the Mumford-Lear extension $[\Lbar^{\otimes N},\ee]$ is not functorial with respect to simple blow-ups of the surface $\ee$ if  the center of the blow-up is taken in $\Sigma$. It is observed in \cite{bkk} that this non-functoriality is closely related to the phenomenon of ``height jumping'' that hermitian metrics may display near subsets of codimension two, as first noted in examples related to moduli spaces of curves by R.~Hain \cite{hain_normal}. Once one sees that geometrically relevant metrics may display a non-trivial height jump, it is already less of a surprise that Chern-Weil theory can fail, also in cases of geometric interest. 

The next question is then, whether Chern-Weil theory (\ref{cwtheory}) can be restored in any natural sense. There seems to be nothing we can change about the integral over the open part occurring on the right hand side of (\ref{cwtheory}). However, the previous remarks indicate that it may not be natural, on the left hand side of (\ref{cwtheory}) where one computes an intersection product, to work with a single chosen model of $E$. It seems that one can only fully capture the information stored in the singularities of the metric near the singular points of the boundary divisor by actually bringing the category $\I$ of \emph{all} regular models $\xx$ of $E$  into the picture. On each single model $\xx$  one might find a Mumford-Lear extension $[\Lbar^{\otimes N},\xx]$ as above, however, the self-intersection $\left[ \Lbar^{\otimes N}, \xx \right]\cdot \left[ \Lbar^{\otimes N}, \xx \right]$ may change as one varies $\xx$ over the category $\I$ (and in fact, it does change). A new idea might then be to think that perhaps the integral in (\ref{integral}) may be computed as the limit, in a suitable sense, of all self-intersections $\left[ \Lbar^{\otimes N}, \xx \right]\cdot \left[ \Lbar^{\otimes N}, \xx \right]$ as one varies $\xx$ over $\I$.

The beautiful main point of \cite{bkk} is that this new idea indeed turns out to work. The technical issue is then, to develop the necessary birational intersection theory to make sense of the limit. A central concept in \cite{bkk} is that of a \emph{b-divisor}, as originally introduced by V.~Shokurov \cite{sh-prel} \cite{sh-3fold}.  For our purposes, the following variant of this concept will be useful. Let again $S$ be a connected smooth complex algebraic variety. A regular \emph{nc}-model of $S$ is any smooth completion $S \hookrightarrow \xx$ of $S$ such that the boundary $\xx \setminus S$ is a normal crossings divisor on $\xx$. 

Let $\I$ denote any cofiltered category of regular \emph{nc}-models of $S$. Here it is understood that a morphism in $\I$ is a birational morphism that restricts to the identity on $S$. Each hom-set in $\I$ is thus either empty or consists of one element. For each $\xx \in \I$ we let $\Div(\xx)$ denote the group of (Weil) divisors on $\xx$, and we write $\Div_\qq(\xx) = \Div(\xx) \otimes_\zz \qq$.  The groups $\Div_\qq(\xx)$ form an inverse system with respect to proper pushforward of divisors. In \cite{bkk}, and also for us in this paper, a \emph{b-divisor on $\I$} is by definition an element of the projective limit
\[ \bDiv_\qq(\I) = \varprojlim_{\xx \in \I} \Div_\qq(\xx) \]
in the category of abelian groups.  Thus, an element $\mathbb{D}$ of $\bDiv_\qq(\I)$ can be viewed as a tuple $(D_\xx)_{\xx \in \I}$ of elements of $\Div_\qq(\xx)$ for $\xx \in \I$, satisfying the compatibility condition that $\varphi_* D_{\xx'} = D_{\xx}$ whenever $\varphi \colon \xx' \to \xx$ is a morphism in the category~$\I$.

We have a natural (partial) multilinear intersection form on the abelian group $\bDiv_\qq(\I)$. Namely, let $n=\dim(S)$ and let $\mathbb{D}_1,\ldots,\mathbb{D}_n$ be b-divisors on $\I$.  Write $\mathbb{D}_1=(D_{1,\xx})_{\xx \in \I}, \ldots, \mathbb{D}_n=(D_{n,\xx})_{\xx \in \I}$, and note that on each model $\xx$ we have an intersection product $D_{1,\xx}  \cdots D_{n,\xx}$ in $\qq$. We then put
\begin{equation} \label{defproduct}  \mathbb{D}_1 \cdots \mathbb{D}_n = \lim D_{1,\xx}  \cdots D_{n,\xx} \, , 
\end{equation}
where the limit is taken in the sense of nets. As we will see, finiteness or even existence of the limit is not to be expected for general tuples of b-divisors; rather strong conditions on the growth of the multiplicities in $D_{i,\xx}$ of the boundary components of $S$ in $\xx$ are needed, for example. 

We say that a tuple $(\mathbb{D}_1,\ldots,\mathbb{D}_n)$ is \emph{integrable} if the limit (\ref{defproduct}) exists in $\rr$. A b-divisor $\mathbb{D}$ is called integrable if the tuple $(\mathbb{D},\ldots,\mathbb{D})$ is integrable. A main guiding principle is that tuples of ``geometrically relevant'' b-divisors should be integrable. For example, in the main text below we will see natural sufficient conditions for integrability in the case $n=2$.

Now let $\llbar=(\ll,\|\cdot\|)$ be a smooth hermitian line bundle on the smooth connected complex algebraic variety $S$. Our aim is to associate to $\llbar$ a b-divisor on $\I$. Assume that on each model $\xx$ in the category $\I$ some sufficiently large tensor power of $\llbar$ has logarithmic growth away from codimension two (we refer to the main text below for precise definitions). We will briefly say in this case that $\llbar$ ``admits all Mumford-Lear extensions'' in $\I$. For each $\xx \in \I$, the canonical extension $\left[ \llbar,\xx \right]$ determined by the metric is then a $\qq$-line bundle on $\xx$. Let $s$ be a non-zero rational section of $\ll$, and for each $\xx \in \I$ let $\divisor_\xx(s)$ denote its $\qq$-divisor on $\xx$ as a rational section of the Mumford-Lear extension $\left[ \llbar,\xx \right]$. It is shown in \cite[Proposition~3.15]{bkk} that the resulting tuple $\bdivisor(s)=(\divisor_\xx (s))_{\xx \in \I}$ is a b-divisor on $\I$, i.e.\ an element of $\bDiv_\qq(\I)$. 

We can now formulate the main result of \cite{bkk}. 
\begin{thm} (Burgos Gil, Kramer and K\"uhn,  \cite[Theorem~5.2]{bkk}) \label{bkkthm} Let $N \geq 3$ be an integer. Let $E=E(N)$ denote the universal elliptic curve over the modular curve $Y(N)$ and let $\Lbar$ denote the line bundle $L=\oo_E(8O)$ on $E$ equipped with the metric that gives the global section $\vartheta_{1,1}^8$ of $L$ its Petersson norm. Then $\Lbar$ admits all Mumford-Lear extensions in the category $\I$ of all regular \emph{nc}-models of $E$ over $X(N)$. The b-divisor
\[ \bdivisor(\vartheta_{1,1}^8) \in \bDiv_\qq(\I) \]
is integrable, and the equality
\begin{equation} \label{cwtheoryell} \bdivisor(\vartheta_{1,1}^8) \cdot \bdivisor(\vartheta_{1,1}^8) = \int_E c_1(\Lbar) \wedge c_1(\Lbar)  
\end{equation}
holds in $\qq$.
\end{thm}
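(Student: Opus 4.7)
The plan is to reduce $(\ref{cwtheoryell})$ to an explicit asymptotic analysis of the Petersson metric near the singular points $\Sigma$ of the boundary $N$-gons of $\ee$, and to interpret the gap between $(\ref{selfint})$ and the expected limiting value $N^2 \int_E c_1(\Lbar)^2$ as a sum of height-jump corrections accumulated under iterated blow-ups of $\Sigma$.

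First I would verify that $\Lbar$ admits all Mumford--Lear extensions in $\I$. On the minimal regular model $\ee$ this follows from \cite{bkk}: the Petersson metric on $\Lbar^{\otimes N}$ has logarithmic growth along $B \setminus \Sigma$. Any other $\xx \in \I$ is dominated by a blow-up of $\ee$ with smooth centres on the boundary, and a local toric computation at a point above $\Sigma$ shows that logarithmic growth is preserved away from codimension two. Hartogs extension then produces a $\qq$-line bundle $[\Lbar^{\otimes N},\xx]$ on every $\xx$, and these assemble, via \cite[Proposition 3.15]{bkk}, into the b-divisor $\bdivisor(\vartheta_{1,1}^8)$.

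Next I would compare the Mumford--Lear extensions on $\xx$ and on a blow-up $\pi \colon \xx' \to \xx$ at a point $p$ above $\Sigma$. Since the two extensions agree outside the exceptional divisor $F \subset \xx'$, the failure of functoriality highlighted in the introduction concentrates along $F$, giving
\[ [\Lbar^{\otimes N},\xx'] = \pi^*[\Lbar^{\otimes N},\xx] + a_p F , \]
where $a_p \in \qq$ is a height-jump coefficient determined by the leading singular behaviour of $\log\|\vartheta_{1,1}^8\|$ at $p$. The projection formula then yields $[\Lbar^{\otimes N},\xx']^2 = [\Lbar^{\otimes N},\xx]^2 - a_p^2$, so self-intersections are monotonically non-increasing along every blow-up tower. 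Combined with the cofilteredness of $\I$, this reduces integrability of $\bdivisor(\vartheta_{1,1}^8)$ to summability of $\sum a_p^2$ over a maximal tree of blow-ups above $\Sigma$.

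The main obstacle, and the technical core of the proof, is to extract each $a_p$ explicitly and to evaluate the total correction $\sum a_p^2$. Near a point $q \in \Sigma$ the total space admits a standard Tate-curve toric description, in which $\vartheta_{1,1}^8$ has an explicit Fourier expansion; from this I would read off the Petersson-norm asymptotics on every iterated toric blow-up of $q$, and sum the resulting squared corrections over the infinite tree above $q$. Using the symmetry over the $p_N$ cusps and the $N$ singular points on each $N$-gon, I expect the total $\sum a_p^2$ to equal precisely the gap between $(\ref{selfint})$ and $N^2 \int_E c_1(\Lbar)^2$, which after normalisation for the tensor power $N$ gives $(\ref{cwtheoryell})$. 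Verifying this numerical identity is the heart of the argument and ultimately rests on classical Jacobi theta identities together with a Kronecker-type limit formula.
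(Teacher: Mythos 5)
Your proposal reconstructs, in outline, the original argument of \cite{bkk} rather than the route taken in this paper: here Theorem \ref{bkkthm} is obtained as a special case of Theorems \ref{integrability} and \ref{chernweil}, by using Faltings' formula $\log\|\vartheta_{1,1}\|_{\mathrm{Pet}}(z,\tau)=g_\tau(0,z)+\log\|\eta\|_{\mathrm{Pet}}(\tau)$ to split $\Lbar$ isometrically as $\oo_E(8O)_{\Ar}\otimes\pi^*\bar{\lambda}_{Y(N)}^{\otimes 4}$ (Proposition \ref{isom}), identifying the b-Mumford--Lear extension of the Arakelov part with Zhang's admissible bundle $\oo(8O)_a$ on the essential skeleton (Theorem \ref{characterize}), and evaluating the limit of self-intersections in closed form via the admissible pairing, never summing blow-up corrections one at a time. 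Your intermediate bookkeeping is correct as far as it goes: the Mumford--Lear extension on a blow-up $\xx'\to\xx$ differs from the pullback by a multiple $a_pF$ of the exceptional divisor, hence $[\Lbar^{\otimes N},\xx']^2=[\Lbar^{\otimes N},\xx]^2-a_p^2$, the net of self-intersections is non-increasing over the directed system of models, and integrability is therefore equivalent to boundedness below, i.e.\ to finiteness of the supremum of the partial sums $\sum a_p^2$.

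The genuine gap is that the two quantitative statements on which everything rests are asserted rather than proved. First, the claim that logarithmic growth away from codimension two persists on every iterated blow-up (needed even to define all the $[\Lbar^{\otimes N},\xx]$) is not a formal consequence of the statement on $\ee$; it requires controlling the singularity of $\log\|\vartheta_{1,1}^8\|$ on arbitrary towers above the corners of the $N$-gons, which in this paper is the content of Theorems \ref{leararakelov} and \ref{admissibility}, resting on \cite[Theorem~1.3]{djfaltings}. Second, and decisively, the identity $\sup_\xx\sum a_p^2=\tfrac{16}{3}Np_N$ (the gap between (\ref{selfint}) and $N^2\int_Ec_1(\Lbar)\wedge c_1(\Lbar)$), which is equivalent to (\ref{cwtheoryell}) itself, appears only as ``I expect''; this is the entire content of the theorem and precisely the surprising point of \cite{bkk}. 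Closing it requires the explicit value of $a_p$ at every node of the infinite tree of toroidal blow-ups --- these are governed by Zhang's piecewise quadratic Green's function $g_\mu$ on the circle of circumference $N$, not by a Kronecker-type limit formula --- together with the convergence of the resulting discrete Laplacian sums to $\int_\Gamma g\,\Delta\,g$ as in Proposition \ref{pairing_with_laplacian}, and an independent evaluation of $\int_Ec_1(\Lbar)\wedge c_1(\Lbar)$. As written, your argument establishes monotonicity of the net but neither its convergence to a finite value nor the equality of the limit with the integral.
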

Our aim in this paper is to generalize Theorem \ref{bkkthm} in the direction of families of complex smooth projective curves of arbitrary genus $g \geq 1$ fibered over a smooth complex curve. The line bundles we will consider are equipped with the fiberwise canonical (Arakelov) metric, as introduced by S. Y. Arakelov \cite{ar}, and we will assume that the minimal regular model of the family is semistable. As is the case with the family of examples considered in \cite{bkk}, we find that the singularities of the Arakelov metric develop a height jump around the singular points of the boundary divisor, and that Chern-Weil theory fails in general on the minimal regular model (and in fact on any regular model).  However, Chern-Weil theory will be satisfied  in the b-divisorial sense. 

We describe our results in more detail. Let $C^0$ be a smooth connected complex algebraic curve and let $\pi \colon S \to C^0$ be a smooth proper morphism with connected fibers of dimension one and of genus $g \geq 1$. Let $C$ be the smooth completion of $C^0$.
We consider the category $\I$ of all regular \emph{nc}-models $S \hookrightarrow \xx$ of $S$ such that the morphism $\pi \colon S \to C^0$ extends to a (then unique) morphism $\xx \to C$. The objects of $\I$ are called regular \emph{nc}-models of $S$ over $C$. The category $\I$ is cofiltered. As above, we have an associated group of b-divisors $\bDiv_\qq(\I)$ on $\I$, equipped with a partial intersection pairing given by (\ref{defproduct}) with $n=2$.

It is straightforward to check that we have a good notion of principal b-divisors inside $\bDiv_\qq(\I)$. We then write $\bPic_\qq(\I)$ for the quotient of $\bDiv_\qq(\I)$ by the subgroup of principal b-divisors. The partial intersection pairing on $\bDiv_\qq(\I)$ descends to a partial intersection pairing on $\bPic_\qq(\I)$, and in particular we have a notion of integrability for (pairs of) classes in $\bPic_\qq(\I)$. 

Let $\llbar=(\ll,\|\cdot\|)$ be a smooth hermitian line bundle on $S$ that admits all Mumford-Lear extensions in the category $\I$. Above we have seen a construction that to each non-zero rational section $s$ of $\ll$ associates a b-divisor $\bdivisor(s)$ in $\bDiv_\qq(\I)$. Not surprisingly, the class of $\bdivisor(s)$ in $\bPic_\qq(\I)$ is independent of the choice of $s$, and hence one has canonically associated to $\llbar$ a class in $\bPic_\qq(\I)$. We call this class the \emph{b-Mumford-Lear extension} of $\llbar$, and denote it by $\left[ \llbar \right]$.

In most of our discussion, the smooth hermitian line bundle $\llbar$ will be picked from a collection of canonical ones that one has on $S$ by Arakelov's work \cite{ar} \cite {fa}. When $M$ is a complete connected complex curve of positive genus, and $P \in M$ is a point, then the line bundle $\oo(P)$ has a canonical metric determined by Arakelov's Green's function on $M$. By varying $P$ through $M$ this assignment induces a smooth hermitian metric $\|\cdot\|$ on the line bundle $\oo(\Delta)$ on the self-product $M \times M$, where $\Delta \subset M \times M$ is the diagonal divisor. By Poincar\'e's residue isomorphism the metric $\|\cdot\|$  induces a canonical smooth hermitian metric on the cotangent line bundle $\omega \isom \oo(-\Delta)|_{\Delta}$ of $M$. We refer to Section \ref{sec:metric} below for precise definitions and the most important properties of canonical metrics. 

We write $\opar$ for the line bundle $\oo(P)$ on $M$ equipped with the canonical Arakelov metric, and $\omar$ for the cotangent line bundle $\omega$ of $M$ equipped with the residual metric. The canonical metrics vary smoothly in families. More precisely, for the smooth proper family $\pi \colon S \to C^0$ we have a canonical smooth hermitian metric on the relative cotangent bundle $\omega_{S/C^0}$. We denote the resulting smooth hermitian line bundle on $S$ by $\omar$. Likewise, when $P \colon C^0 \to S$ is a section of $\pi \colon S \to C^0$, we have a canonical smooth hermitian metric on the line bundle $\oo(P)$ on $S$, and we denote the resulting smooth hermitian line bundle on $S$ by $\opar$. 

Our main results are then as follows. Recall that $\I$ denotes the category of all regular \emph{nc}-models of $S$ over $C$.
\begin{thm} \label{integrability} Let $P \colon C^0 \to S$ be a section of the family of curves $\pi \colon S \to C^0$.  Let the smooth hermitian line bundle $\llbar$ be either $\opar$ or $\omar$ on $S$. Let $\mmbar$ be a smooth hermitian line bundle on $C^0$ which is good on $C$ in the sense of Mumford. Assume that $S$ has semistable reduction over $C$. Then $\llbar \otimes \pi^*\mmbar$ admits all Mumford-Lear extensions in  $\I$. In particular, the b-Mumford-Lear extension $\left[ \llbar \otimes \pi^*\mmbar \right]$ exists in $\bPic_\qq(\I)$.
\end{thm}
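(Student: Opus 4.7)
The plan is to verify, for each $\xx \in \I$, that some sufficiently divisible tensor power of $\llbar \otimes \pi^*\mmbar$ has logarithmic growth along the boundary divisor $B_\xx = \xx \setminus S$ away from a closed subset of codimension two. Once this is established, Mumford's theorem provides a canonical extension off the bad locus, Hartogs' theorem delivers a $\qq$-line bundle $[\llbar \otimes \pi^*\mmbar, \xx]$ over all of $\xx$, and by the standard argument (cf.\ \cite[Proposition~3.15]{bkk}) the $\qq$-divisors of a fixed non-zero rational section assemble into an element of $\varprojlim_\xx \Div_\qq(\xx)$, producing the sought class in $\bPic_\qq(\I)$.

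I would first dispose of the twist by $\pi^*\mmbar$: since $\mmbar$ is good on $C$ in Mumford's sense, the pulled-back metric has logarithmic growth along the entire vertical boundary $\pi^{-1}(C \setminus C^0) \supseteq B_\xx$ on each $\xx \in \I$, and log growth is preserved under tensor products. Hence it suffices to treat $\opar$ and $\omar$ individually. Next, since a Mumford-Lear extension on a model $\xx'$ pushes forward along $\xx' \to \xx$, and every $\xx \in \I$ is dominated by a model that also dominates the minimal semistable model $\xx_0$ of $S$ over $C$, one may restrict attention to such dominating $\xx$. Any such $\xx$ arises from $\xx_0$ by a finite sequence of blowups supported over $C \setminus C^0$, and may therefore be analyzed in plumbing coordinates $(x,y,t)$ with $xy = t^e$ at each node of a semistable fiber.

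For $\llbar = \omar$, the crucial input is the asymptotic behavior of the canonical metric on the relative dualizing sheaf $\omega_{S/C^0}$ at a semistable nodal degeneration. Using explicit formulas for the Arakelov Green's function in terms of the reduction graph of the special fiber (as developed, e.g., by Jorgenson--Kramer, Wentworth, and de Jong), one sees that on $\xx_0$ the metric has at worst logarithmic growth along each component of the boundary away from the finite set of nodes, which is of codimension two. On a blowup centered at a node, the same analysis in plumbing coordinates yields log growth on the new exceptional fiber components away from their mutual intersections and their intersections with the strict transforms, again a codimension-two locus. The case $\llbar = \opar$ proceeds in parallel, with the additional bad set of points where $P$ meets a node of a fiber also having codimension two.

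The principal obstacle is the quantitative local analysis. For a local frame $s$ of $\llbar$ one needs an estimate of the form $\bigl|\log\|s\|\bigr| \leq A\sum_i \bigl|\log|t_i|\bigr| + B$ uniformly on the smooth locus of $B_\xx$ for every $\xx$ that dominates $\xx_0$. This demands either an explicit plumbing-coordinate expansion of the Arakelov Green's function in a one-parameter family (generalizing the genus-one computation of \cite{bkk}, with the reduction graph of the semistable fiber playing the role of the $N$-gon) or an appeal to Faltings' theory of admissible metrics combined with a careful degeneration analysis of the polarization; one then has to check that these estimates remain stable under iterated blowups centered in the singular set of the boundary, which is where the codimension-two control must be preserved.
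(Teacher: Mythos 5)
Your overall skeleton is the right one --- reduce to showing that a tensor power has logarithmic growth away from a codimension-two subset of each $\xx$, dispose of the twist by $\pi^*\mmbar$ using goodness of $\mmbar$, invoke Hartogs and the pushforward compatibility of \cite[Proposition~3.15]{bkk} --- and your reduction to models dominating the minimal semistable model is harmless. But the proposal has a genuine gap at exactly the point you flag as ``the principal obstacle'': the uniform estimate $|\log\|s\||\leq A\sum_i|\log|t_i||+B$ on the smooth locus of the boundary, stable under iterated blow-ups in boundary singular points, is the entire analytic content of the theorem, and you do not prove it. You assert that ``one sees'' log growth on $\xx_0$ away from the nodes from explicit formulas for the Arakelov Green's function, and then offer two possible strategies for the general case without carrying either out. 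Given that the metric is \emph{not} of logarithmic growth near the nodes (this is the height-jumping phenomenon that the whole paper is organized around), the behavior in a punctured neighborhood of the excised codimension-two set is delicate, and the claim that the estimates ``remain stable under iterated blowups'' is precisely what needs a proof rather than a plan.

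The paper avoids this local analysis entirely by a structural trick you are missing. Setting $T=S\times_{C^0}S$ with projections $p_1,p_2$ and diagonal $\Delta$, one has canonical isometries $\omar\isom\pair{\oo(\Delta)_{\Ar},p_2^*\omar}$ and $\opar\isom\pair{\oo(\Delta)_{\Ar},p_2^*\oo(P)_{\Ar}}$, the Deligne pairings taken along $p_1$. Applying the second author's prior result (\cite[Theorem~1.3]{djfaltings}, quoted here as Theorem~\ref{leararakelov}) to the semistable family $\xx\times_Z\xx_0\to\xx$ shows that a positive tensor power of each of these Deligne pairings extends \emph{continuously} over $\xx\setminus B^\sing$ --- a conclusion stronger than logarithmic growth --- and the theorem follows at once. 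So the degeneration analysis is outsourced to a single black-box theorem about Deligne pairings of Arakelov-metrized bundles over semistable models, rather than redone in plumbing coordinates. If you want to complete your route instead, you would essentially have to reprove that input.
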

\begin{thm} \label{chernweil} Let  $P, Q \colon C^0 \to S$ be sections of the family of curves $\pi \colon S \to C^0$. Let $\bar{\ll}_1$, $\bar{\ll}_2$ be any of the smooth hermitian line bundles $\opar$, $\oqar$ or $\omar$ on $S$. Let $\bar{\mm}_1$, $\bar{\mm}_2$ be 
smooth hermitian line bundles on $C^0$ which are good on $C$ in the sense of Mumford. Assume that $S$ has semistable reduction over $C$. Then the pair
\[ \left( \left[ \bar{\ll}_1 \otimes \pi^*\bar{\mm}_1 \right], \left[ \bar{\ll}_2 \otimes \pi^*\bar{\mm}_2 \right] \right) \]
of b-Mumford-Lear extensions in $\bPic_\qq(\I)$ is integrable. Moreover, the equality
\begin{equation} \label{inters_is_integral} \left[ \bar{\ll}_1 \otimes \pi^*\bar{\mm}_1 \right] \cdot 
 \left[ \bar{\ll}_2 \otimes \pi^*\bar{\mm}_2 \right] = 
\int_S c_1(\bar{\ll}_1 \otimes \pi^*\bar{\mm}_1 ) \wedge c_1(\bar{\ll}_2 \otimes \pi^*\bar{\mm}_2) 
\end{equation}
holds in $\qq$.
\end{thm}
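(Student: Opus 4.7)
The plan is to reduce the theorem to an explicit computation on the minimal regular semistable model $\xx_0 \in \I$ and to control how that computation changes under birational refinement of the model.

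First, by bilinearity of the intersection pairing on $\bPic_\qq(\I)$, I would decompose
\[
\bigl[\bar{\ll}_i \otimes \pi^*\bar{\mm}_i\bigr] = \bigl[\bar{\ll}_i\bigr] + \bigl[\pi^*\bar{\mm}_i\bigr]
\]
and expand $\bigl[\bar{\ll}_1 \otimes \pi^*\bar{\mm}_1\bigr]\cdot\bigl[\bar{\ll}_2\otimes \pi^*\bar{\mm}_2\bigr]$ into four pieces. For the pieces involving $\pi^*\bar{\mm}_i$, since $\bar{\mm}_i$ is good on $C$ it has a genuine Mumford extension $[\bar{\mm}_i,C]$, whose pullback $\pi^*[\bar{\mm}_i,C]$ is compatible with birational pullback for any $\xx\to C$. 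Hence the b-Mumford-Lear class $[\pi^*\bar{\mm}_i]$ is represented on every $\xx\in\I$ by the same underlying pullback divisor, and the three pieces involving at least one such factor are immediately independent of $\xx$; they give the corresponding integrals of Chern forms over $S$ by Mumford's original Chern-Weil result applied to $C$ combined with the projection formula. This reduces the theorem to the purely relative case where $\bar{\mm}_1=\bar{\mm}_2=\oo_{C^0}$ with trivial metric.

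Second, for each $\xx\in \I$ I would write the Mumford-Lear divisor as
\[
\divisor_\xx(s_i) = H_{i,\xx} + V_{i,\xx},
\]
where $H_{i,\xx}$ is the Zariski closure in $\xx$ of the divisor of $s_i$ on $S$, and $V_{i,\xx}$ is supported on the boundary $\xx\setminus S$. The crucial input is an asymptotic expansion of the Arakelov metric on $\opar$ and $\omar$ near the singular fibers of a semistable model. Using the analytic description of the Arakelov Green's function in the nodal degeneration (Faltings, Wentworth, de Jong) one expresses $V_{i,\xx}$ on the minimal semistable model $\xx_0$ in terms of the Zhang admissible pairing on the dual graphs of the singular fibers; the same formulas describe how $V_{i,\xx}$ changes under blowing up a point of $\Sigma$ lying over a node. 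The intersection number on $\xx_0$ then splits as
\[
D_{1,\xx_0}\cdot D_{2,\xx_0} = \bigl(\text{horizontal contribution}\bigr) + \bigl(\text{boundary contribution}\bigr),
\]
where the horizontal part $H_{1,\xx_0}\cdot H_{2,\xx_0}$ plus the cross terms $H_{i,\xx_0}\cdot V_{j,\xx_0}$ can be identified, by an adjunction/arithmetic Riemann-Roch style computation fiber by fiber, with $\int_S c_1(\bar{\ll}_1)\wedge c_1(\bar{\ll}_2)$ plus a remainder expressible via admissible pairings on the metric graphs of the singular fibers.

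Third, I would compare models. Given $\varphi\colon \xx'\to \xx$ in $\I$ with $\varphi$ a blowup at a point lying over a node, project formula gives $\varphi_*\bigl(D_{1,\xx'}\cdot D_{2,\xx'}\bigr)=\varphi_*D_{1,\xx'}\cdot D_{2,\xx}$, but the Mumford-Lear extension is \emph{not} functorial at such points, so $\varphi_* D_{i,\xx'} \neq D_{i,\xx}$ in general; the discrepancy is a height-jump divisor supported on the exceptional locus, whose contribution to the intersection is an explicit rational quantity depending only on the combinatorics of the dual graph of the fiber above that node. A direct calculation on semistable chains of rational curves then shows that as one refines $\xx$ by repeated blowups over the nodes, the boundary contribution converges and, summed with the horizontal part, exactly cancels the remainder identified in the previous step. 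This yields both integrability and the claimed identity in $\qq$.

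The main obstacle is the third step: producing a clean, model-independent formula for the height-jump contribution and verifying that the remainders vanish in the cofiltered limit. Concretely, one needs that for each singular fiber the sequence of Zhang-admissible data associated to its successive blowups stabilizes; this amounts to showing that the appropriate Green's functions on metrized graphs are compatible with subdivision. Once this stability is in hand, the limit (\ref{defproduct}) can be evaluated on a cofinal subcategory where the asymptotic description of the Arakelov metric is explicit, yielding the equality (\ref{inters_is_integral}) in $\qq$.
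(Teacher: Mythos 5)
Your overall architecture --- strip off the $\pi^*\bar{\mm}_i$ factors, split the Mumford--Lear divisors into horizontal and vertical parts, express the vertical parts through Zhang's admissible theory on the dual graphs of the singular fibers, and control the limit over blow-ups --- is close in spirit to the paper's proof, which identifies $\left[\opar\right]$ and $\left[\omar\right]$ with Zhang's admissible classes $\oo(P)_a$ and $\omega_a$ and evaluates the pairing as an integral against the Laplacian on the essential skeleton. But there are two genuine gaps. The first is structural: the limit (\ref{defproduct}) is a net over the category $\I$ of \emph{all} regular \emph{nc}-models, and the subcategory of models obtained by blowing up only points lying over nodes is \emph{not} cofinal in $\I$, so you cannot evaluate the limit there without a separate argument. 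Your third step tracks only blow-ups ``over a node''; you must also show that blow-ups centered away from $B^{\sing}$ leave the intersection numbers unchanged. The paper does this in Proposition \ref{can_restrict}, using that some power of $\llbar_i\otimes\pi^*\mmbar_i$ has logarithmic growth on $\xx\setminus B^{\sing}$, so its Mumford--Lear extension pulls back correctly under such blow-ups and the projection formula applies.

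The second gap is more serious: the step where the ``horizontal contribution plus cross terms'' is identified with $\int_S c_1(\llbar_1)\wedge c_1(\llbar_2)$ ``by an adjunction/arithmetic Riemann--Roch style computation fiber by fiber'' has no mechanism behind it, and this is where the real analytic content of the theorem sits. After fiber integration the archimedean side becomes $\int_{C^0}c_1(\pair{\llbar_1,\llbar_2})$, and one must prove that this equals $\deg\left[\pair{\llbar_1,\llbar_2},C\right]$. Logarithmic growth of a hermitian line bundle on $C^0$ does \emph{not} by itself guarantee this equality, nor even convergence of the integral; the paper needs the additional input that each relevant Deligne pairing is, up to a positive tensor power, a quotient of two line bundles that are \emph{semipositive} on $C^0$ and admit Mumford--Lear extensions over $C$ --- resting on positivity of the height jump divisor for $\pair{\omar,\omar}$ and $\pair{\opar,\omar}$, and on normal-function results for $\delta^*\bb\isom\pair{\oo(P-Q)_{\Ar},\oo(P-Q)_{\Ar}}^{\otimes -1}$ --- combined with the lemma that semipositivity plus logarithmic growth implies Chern--Weil on a curve. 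Your sketch never invokes any positivity, so the claimed cancellation of the boundary remainder against the archimedean integral is unsupported. Relatedly, the identification of the vertical multiplicities with values of Zhang's Green's function (your second step) is not a routine asymptotic expansion: in the paper it is extracted from the nontrivial result \cite[Theorem~1.4]{djfaltings} on Mumford--Lear extensions of Deligne pairings, applied along curves meeting the boundary transversally away from $B^{\sing}$.
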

We indicate how to obtain Theorem \ref{bkkthm} as a special case.  Take $C^0$ to be the modular curve $Y(N)$, where $N \geq 3$ is an integer, and let $\pi \colon E \to C^0$ be the universal elliptic curve over $C^0$. Let $C=X(N)$ denote the smooth completion of $C^0$. Let $\Lbar$ denote the line bundle $L=\oo(8O)$ on $E$ equipped with the metric that gives $\vartheta_{1,1}^8$ its Petersson norm. On the other hand let $\llbar=\oo(8O)_\Ar$ denote the line bundle $L$ on $E$ equipped with the canonical (Arakelov) metric. Let $\bar{\lambda}_{C^0}$ denote the Hodge bundle $\lambda_{C^0}$ on $C^0$, equipped with the Petersson metric, and put $\mmbar=\bar{\lambda}_{C^0}^{\otimes 4}$.  It can be shown that we then have an isometry
\[ \Lbar \isom \llbar \otimes \pi^*\mmbar   \]
of smooth hermitian line bundles on $E$, cf.\ Proposition \ref{isom} below. 

Now we note that $\bar{\lambda}_{C^0}$ is the standard automorphic line bundle on the Shimura variety $C^0$, equipped with its invariant metric. By the results of \cite{hi} the smooth hermitian line bundle $\bar{\lambda}_{C^0}$ is good on $C$ in the sense of Mumford. Also we note that $E$ has semistable reduction over $C$. Then, applying Theorems \ref{integrability} and \ref{chernweil} we conclude that $\Lbar$ has all Mumford-Lear extensions in $\I$, that the b-Mumford-Lear extension $\left[ \Lbar \right]$ in $\bPic_\qq(\I)$ is integrable, and that the equality 
\[  \left[ \Lbar \right] \cdot \left[ \Lbar \right] =  \int_E c_1(\Lbar) \wedge c_1(\Lbar)  \]
is satisfied in $\qq$.  Thus we recover Theorem \ref{bkkthm}. 

Actually, Theorems \ref{integrability} and \ref{chernweil} immediately allow for generalizations of the results in \cite{bkk} to the case of modular elliptic surfaces $E(\Gamma)$ where $\Gamma$ is any congruence subgroup of $\SL_2(\zz)$ that has no elliptic fixed points and no cusps of the second kind. We will come back to this in Section \ref{sec:modular} below. 

The proof in \cite{bkk} of equality (\ref{cwtheoryell})  consists in computing the left and right hand side of (\ref{cwtheoryell}) both explicitly. To some extent this is also true for our proof of  the equality (\ref{inters_is_integral}). In fact, what we will prove is that, in the notation of Theorem \ref{chernweil}, the Deligne pairing $\pair{\llbar_1 \otimes \pi^*\mmbar_1,\llbar_2 \otimes \pi^*\mmbar_2}$, which is naturally a smooth hermitian line bundle on $C^0$, admits a Mumford-Lear extension $\left[  \pair{\llbar_1\otimes \pi^*\mmbar_1,\llbar_2 \otimes \pi^*\mmbar_2}, C\right]$ over $C$, which then lives in $\Pic_\qq(C)$, and that both left and right hand side of equality (\ref{inters_is_integral}) are equal to the degree of the $\qq$-line bundle $\left[  \pair{\llbar_1 \otimes \pi^*\mmbar_1,\llbar_2 \otimes \pi^*\mmbar_2}, C\right]$. For the notion of Deligne pairing of two (smooth hermitian) line bundles on a family of curves we refer to~\cite[Section~XIII.5]{acg} and~\cite{de}.

We sketch some of the most important ideas in the proof, referring to the main text for details. For each $\xx \in \I$ we let $\Gamma(\xx)$ denote the metric space determined by the weighted dual graph of the boundary divisor $B(\xx)$ of $S$ in $\xx$. Let $V(\xx)\subset \Gamma(\xx)$ be the set of irreducible components of $B(\xx)$, i.e., the designated vertex set of $\Gamma(\xx)$. 

Let $\HH$ denote the direct limit $\varinjlim \Gamma(\xx)$, and let $\HH_\qq = \varinjlim V(\xx) \subset \HH$ denote the rational support of $\HH$. The space $\HH$ inherits from the $\Gamma(\xx)$ a canonical structure of metric space.  A first main ingredient of our arguments is the observation that there exists a natural isomorphism of abelian groups
\begin{equation} \label{natural}  \bDiv_\qq(\I) \isom \Div_\qq(S) \oplus \mathrm{Map}(\HH_\qq,\qq) \, . 
\end{equation}
Thus, a b-divisor on $\I$ can be seen as a pair consisting of a $\qq$-divisor $D$ on $S$ and a $\qq$-valued function $g$ on $\HH_\qq$. Without putting extra conditions on the functions $g$, it is clear that intersections of b-divisors have little chance to exist. The situation is already better if one asks for continuous extendability of $g$ over $\HH$. 
For example, the b-divisors whose projections onto the various $\Div_\qq(\xx)$ are compatible with pullback (below called the \emph{$\qq$-Cartier divisors on $\I$)} give rise to piecewise affine functions on~$\HH$. 

The space $\HH$ has a natural deformation retract $\rho \colon \HH \to \Gamma \subset \HH$, called the essential skeleton of $\HH$. The essential skeleton $\Gamma$ is a compact metrized graph. A special role will be played in our proof by functions $g$ on $\HH_\qq$ that are the pullback, along $\rho$, of the restriction to $\Gamma_\qq=\HH_\qq \cap \Gamma$ of a piecewise smooth function on $\Gamma$. 
We show that the b-Mumford-Lear extensions  $\left[ \bar{\ll}_1 \otimes \pi^*\bar{\mm}_1 \right] $ and $\left[ \bar{\ll}_2 \otimes \pi^*\bar{\mm}_2 \right] $ in $\bPic_\qq(\I)$ are classes of b-divisors of this type. The proof of this result is based on a direct connection that we will establish with the theory of compactified divisors on curves over non-archimedean valued fields as developed by S.~Zhang in \cite{zh}. The intersection pairing of our b-Mumford-Lear extensions can be explicitly calculated using the formalism in \cite{zh}. In particular one sees that the intersection pairing is well-defined and takes finite values.

We then use a result of the second author from \cite{djfaltings} to equate the Deligne pairing of the b-Mumford-Lear extensions $\left[ \bar{\ll}_1 \otimes \pi^*\bar{\mm}_1 \right] $ and $\left[ \bar{\ll}_2 \otimes \pi^*\bar{\mm}_2 \right] $ with the Mumford-Lear extension $\left[  \pair{\llbar_1 \otimes \pi^*\mmbar_1,\llbar_2 \otimes \pi^*\mmbar_2}, C\right]$. To finally equate the integral from (\ref{inters_is_integral}) with the degree of $\left[  \pair{\llbar_1 \otimes \pi^*\mmbar_1,\llbar_2 \otimes \pi^*\mmbar_2}, C\right]$ we use a lemma, which seems to be well-known, that says that a smooth hermitian line bundle $\bar{N}$ on $C^0$ that both has semipositive first Chern form on $C^0$, and has logarithmic growth on $C$, satisfies Chern-Weil theory on $C$. We then verify case by case that for the Deligne pairings $\pair{\llbar_1,\llbar_2}$ under our consideration enough semipositivity can be arranged in order to apply a mild generalization of this lemma. 

Our results show, generalizing the observations of \cite{bkk}, that in general the hermitian line bundles $\llbar_1, \llbar_2$ occurring in Theorems~\ref{integrability} and~\ref{chernweil} are \emph{not} of logarithmic growth (and in particular, not good) on any regular \emph{nc}-model of $S$. In particular, this holds for the family Arakelov metric $\omar$. This result is in sharp contrast with the situation for the family \emph{hyperbolic} metric $\omega_\mathrm{hyp}$ on $\omega$ if $g \geq 2$. Assuming semistable reduction, this metric is known to be good on any regular \emph{nc}-model by a well known result due to S.~Wolpert \cite[Theorem~5.8]{wo}. 

We mention that b-divisors have been studied and applied in a number of recent works other than \cite{bkk}. S.~Boucksom, C.~Favre and M.~Jonsson interpret in \cite{bfj-val} the functional part of b-divisors as functions on suitable valuation spaces, with the objective of developing a valuative study of complex plurisubharmonic functions and positive currents. This entails developing a good analogue of pluripotential theory in the context of Berkovich spaces \cite{bfj-sing}. In their work \cite{bfj-vol} b-divisors and products of b-divisor classes up to numerical equivalence are used to study the differentiability of the volume function for divisors on projective varieties. The work \cite{bfj-vol} reveals in particular a tight connection between positivity properties of b-divisors  and concavity properties of its functional part. 

A.~M.~Botero \cite{bo} has made an extensive study of b-divisors on smooth complete toric varieties with respect to toric blow-ups. Among other things, she shows that nef toric b-divisors correspond to  $\qq$-concave functions  and she relates the intersection product of a tuple of nef toric b-divisors to the volume of an appropriate bounded convex set. In particular, the intersection product of nef toric b-divisors is well-defined and takes finite values. We refer to the PhD thesis \cite{bo-thesis} and the forthcoming \cite{bo-toroidal} for generalizations of the results of \cite{bo} to the setting of toroidal embeddings and toroidal blow-ups. \\

The structure of this paper is as follows. In Section \ref{sec:bdiv} we first discuss preliminaries on b-divisors and their intersection product. In Section \ref{sec:ml} we show how a b-divisor can be naturally associated to a section of a smooth hermitian line bundle that admits all Mumford-Lear extensions. These preliminary  sections follow  \cite{bkk} to a large extent. From Section \ref{sec:model_skel} on we focus on the particular case of surfaces fibered over a curve. Using the notion of skeleton from \cite{bn} \cite{mn} we give a presentation of the group of b-divisors from a metric and functional perspective, leading to the isomorphism (\ref{natural}). We also briefly touch upon the connection with the theory of Berkovich analytic curves. 

In Section \ref{bpairing} we introduce the notion of metrized $\qq$-Cartier divisor, endow the group of metrized $\qq$-Cartier divisors with a natural pairing, and show that this pairing can be used to understand the intersection pairing of b-divisors. Inspired by \cite{zh} we then introduce in Section \ref{sec:comp} the notion of a compactified b-divisor, and show that the intersection pairing of compactified b-divisors exists, and can be computed using a Riemann integral over the essential skeleton. Using the notion of curvature form of compactified b-divisors we study in Section \ref{sec:admis} the subgroup of so-called admissible b-divisors. 

From Section \ref{sec:metric} on we return to the analytical side, starting with a discussion of the Arakelov metric in families of curves. Theorem \ref{integrability} is then proved in Section~\ref{sec:proof_integr}.  In Section \ref{sec:reduction} we make a reduction step that allows us to replace, in order to prove Theorem \ref{chernweil}, the category $\I$ by the smaller category $\I_0$ of so-called essential blow-ups of the minimal regular model. In Section \ref{sec:comp_ess} we then show that the b-Mumford-Lear extensions whose existence is claimed in Theorem \ref{integrability} are admissible, when computed on the category of essential blow-ups. 

The explicit intersection theory of Sections \ref{sec:comp} and \ref{sec:admis} is used in Section \ref{sec:proofcw} to give a proof of Theorem \ref{chernweil}. In Section \ref{sec:infinite} we discuss an interpretation of the identity in Theorem \ref{chernweil} as a Chern-Weil type result for hermitian line bundles with logarithmic growth on a certain canonical non-quasicompact scheme associated to the category $\I_0$. Finally in Section \ref{sec:modular} we make our results explicit for the case of modular elliptic surfaces on general congruence subgroups. \\

\noindent When $V$ is a smooth separated $\cc$-scheme (not necessarily of finite type), by a smooth hermitian line bundle on $V$ we mean a smooth hermitian line bundle on the analytification $V(\cc)$ of $V$. A complex algebraic variety is a reduced scheme which is separated and of finite type over $\cc$. 

\subsection*{Acknowledgements} We would like to thank Ana Mar\' \i a Botero, Jos\'e Burgos Gil, David Holmes and Anna-Maria von Pippich for valuable comments and discussions.

\section{b-divisors and their intersection product} \label{sec:bdiv}

We introduce the notions of b-divisors, principal b-divisors, b-line bundles, and their intersection product in a setting that will be convenient throughout the rest of the paper.  

Let $S$ be a smooth connected complex algebraic variety. A regular \emph{nc}-model of~$S$ is a smooth completion $S \hookrightarrow \xx$ of $S$ such that the boundary $\xx \setminus S$ is a normal crossings divisor on $\xx$. A map of regular \emph{nc}-models is to be a birational morphism 
$\varphi \colon \xx' \to \xx$ that restricts to the identity on $S$. We let $\I$ denote any cofiltered category of regular \emph{nc}-models of $S$ (for example, the category of \emph{all} regular \emph{nc}-models). Note that all hom-sets in $\I$ are either empty or consist of one element.

For each $\xx \in \I$ let $\Div(\xx)$ denote the group of (Cartier) divisors on $\xx$, and write $\Div_\qq(\xx)=\Div(\xx) \otimes_\zz \qq$ for the group of $\qq$-Cartier divisors on $\xx$. The group of \emph{$\qq$-Cartier divisors on $\I$} is defined to be the direct limit
\[ \cDiv_\qq(\I) = \varinjlim_{\xx \in \I} \Div_\qq(\xx)\, , \]
where all transition maps $\Div_\qq(\xx) \to \Div_\qq(\xx')$ for maps $\varphi \colon \xx' \to \xx$ in $\I$ are given by the usual pullback $\varphi^*$ of $\qq$-Cartier divisors. The notion of $\qq$-Cartier divisor on $\I$ is modelled on that of a $\qq$-Cartier divisor on the Zariski-Riemann space of $S$ as in the references \cite{bfj-val}, \cite{bfj-vol} and \cite{bkk}. 

An element of $\cDiv_\qq(\I)$ can be thought of as a $\qq$-Cartier divisor on some model $\xx\in \I$. Two such divisors on models $\xx, \xx' \in \I$  are considered to be equal in $\cDiv_\qq(\I)$ if they agree after pullback to a model $\xx'' \in \I$ that dominates both $\xx$ and $\xx'$. The group $\cDiv_\qq(\I)$ is equipped with a natural intersection product, as follows. Let $n=\dim(S)$ and suppose $\mathbb{D}_1,\ldots, \mathbb{D}_n \in \cDiv_\qq(\I)$ are $\qq$-Cartier divisors on $\I$. By our assumption that $\I$ is cofiltered, we can assume that  $\mathbb{D}_1,\ldots, \mathbb{D}_n$ are all realized on a common model $\xx \in \I$ of $S$. We then define the intersection product $\mathbb{D}_1 \cdots \mathbb{D}_n$ to be their intersection product on $\xx$. One verifies easily that this definition does not depend on the choice of common model $\xx \in \I$. 

For $\xx \in \I$ let $\Princ(\xx) \subset \Div(\xx)$ denote the set of principal divisors on $\xx$, and write $\Princ_\qq(\xx) = \Princ(\xx) \otimes_\zz \qq$. We define 
\[ \Princ_\qq(\I) = \varinjlim_{\xx \in \I} \Princ_\qq(\xx)  \subset \varinjlim_{\xx \in \I} \Div_\qq(\xx) = \cDiv_\qq(\I) \, . \]
Note that all transition maps $\varphi^* \colon \Princ_\qq(\xx) \to \Princ_\qq(\xx')$ are isomorphisms. We denote the quotient of $\cDiv_\qq(\I)$ by the subgroup $\Princ_\qq(\I) $ by $\cPic_\qq(\I)$, and the quotient map $\cDiv_\qq(\I) \to \cPic_\qq(\I)$ by $\oo$. We call $\cPic_\qq(\I)$ the group of \emph{line bundles on $\I$}. It is easily verified that the intersection product $\cdot$ on $\cDiv_\qq(\I)$ that we introduced above descends to an intersection product on $\cPic_\qq(\I)$.

The group of \emph{b-divisors on $\I$} is defined to be the projective limit
\[ \bDiv_\qq(\I) = \varprojlim_{\xx \in \I} \Div_\qq(\xx) \]
in the category of abelian groups. Here, the transition maps are given by pushforward $\varphi_* \colon \Div_\qq(\xx') \to \Div_\qq(\xx)$ of $\qq$-Weil divisors for all maps $\varphi \colon \xx' \to \xx$ in the category $\I$.  This notion of b-divisor is modelled on the notion of a Weil $\qq$-divisor on the Zariski-Riemann space of $S$ as in the references \cite{bfj-val}, \cite{bfj-vol} and \cite{bkk}. A b-divisor $\mathbb{D}$ can be seen as a tuple $\mathbb{D}=(D_\xx)_{\xx \in \I}$ of $\qq$-Weil divisors $D_\xx \in \Div_\qq(\xx)$ parametrized by the models $\xx \in \I$ with the single compatibility condition that the identity $\varphi_*D_{\xx'}=D_\xx$ should hold whenever $\varphi \colon \xx' \to \xx$ is a morphism in $\I$. 

The \emph{incarnation} or \emph{trace}  of a b-divisor $\mathbb{D}$ on any given model $\xx \in \I$ is the result of projecting $\mathbb{D}\in\bDiv_\qq(\I)=\varprojlim \Div_\qq(\xx)$ onto the factor $\Div_\qq(\xx)$. From the fact that $\varphi_* \varphi^*$ is always the identity morphism we deduce the existence of a canonical injection $\cDiv_\qq(\I) \hookrightarrow \bDiv_\qq(\I)$. In particular we have $\Princ_\qq(\I)$ naturally as a subgroup of $\bDiv_\qq(\I)$. 

We would like to extend (at least partially) the previously defined intersection product of $\qq$-Cartier divisors into an intersection product of b-divisors. Let $\mathbb{D}_1, \ldots,\mathbb{D}_n \in \bDiv_\qq(\I)$ be b-divisors on the category $\I$. We then put 
\[ \mathbb{D}_1 \cdots \mathbb{D}_n  = \lim_{\xx \in \I} D_{1,\xx} \cdots D_{n,\xx} \]
in the sense of nets, where $\mathbb{D}_i=(D_{i,\xx})_{\xx \in \I}$ for $i=1, \ldots, n$. It is easily verified that this intersection product extends the previous one of $\qq$-Cartier divisors on $\I$. An intersection product where one of the factors is a principal divisor, is clearly zero. However note that in general the limit in the definition above can not be guaranteed to exist, let alone to be finite. When the limit $\mathbb{D}_1 \cdots \mathbb{D}_n$ exists as a real number, we say that the tuple $(\mathbb{D}_1, \ldots, \mathbb{D}_n)$ of b-divisors on $\I$ is \emph{integrable}. 
 
We define the group of \emph{b-line bundles on $\I$} to be the quotient group
\[ \bPic_\qq(\I) = \bDiv_\qq(\I) / \Princ_\qq(\I) \, , \]
with quotient map denoted again by $\oo$. This quotient map clearly extends our earlier defined map $\oo \colon \cDiv_\qq(\I) \to \cPic_\qq(\I)$ between ``usual'' $\qq$-Cartier divisors and line bundles on the category $\I$. It is straightforward to verify that we have a natural identification
\[ \bPic_\qq(\I) = \varprojlim_{\xx \in \I} \Pic_\qq(\xx) \, , \]
where for each $\xx \in \I$ we write $\Pic_\qq(\xx)=\Pic(\xx) \otimes_\zz \qq$ and where on the right hand side the transition maps are given by pushforward of Weil divisor classes. Our partial intersection product of b-divisors on $\I$ descends to a partial intersection product of b-line bundles on $\I$.

\section{Mumford-Lear extensions} \label{sec:ml}

Let $S$ denote a smooth connected complex algebraic variety and let $\I$ denote any cofiltered category of regular \emph{nc}-models of $S$. Let $\bar{L}=(L,\|\cdot\|)$ be a smooth hermitian line bundle on $S$. Following \cite{bkk} we explain how a b-divisor on $\I$ naturally arises from the datum of a non-zero rational section of $L$, assuming that for each model $\xx \in \I$ the metric $\|\cdot\|$ has ``logarithmic growth away from codimension two''.  We start by explaining the latter notion. The discussion in this section is to a large extent based on  \cite[Section~3]{bkk}.

Assume that $\dim(S)=n$, and let $S \hookrightarrow \xx$ denote an open immersion, where $\xx$ is regular, and where the complement $B=\xx \setminus S$ has the structure of a normal crossings divisor. We are not assuming that $\xx$ is complete, at this point.
\begin{definition} \label{loggrowth} We say that $\bar{L}=(L,\|\cdot\|)$ \emph{has logarithmic growth on $\xx$} if the following holds: there exists a line bundle $M$ on $\xx$, together with an isomorphism $\alpha \colon M|_S \isom L$, such that for all points $p \in B $ there exists a coordinate neighborhood $(U;z_1,\ldots,z_n)$ of $p$ in $\xx$, an integer $1 \leq k \leq n$, a generating section $s \in M(U)$ and a non-negative integer $N$ such that $U \cap B$ is given by the equation $z_1 \cdots z_k =0$, and for suitable $C_1, C_2 >0$ the estimate 
\begin{equation} \label{estimate} C_1 \cdot \prod_{i=1}^k (-\log|z_i|)^{-N} \leq \| \alpha|_{U\setminus B}(s) \| \leq C_2 \cdot \prod_{i=1}^k (-\log|z_i|)^{N}  
\end{equation}
holds on $U \setminus D$.
\end{definition}
The pair $(M,\alpha)$ in Definition \ref{loggrowth} is uniquely determined up to unique isomorphism, once it exists; see for instance (the proof of) \cite[Proposition~1.3]{hi}. We denote the (isomorphism class of the) line bundle $M$ on $\xx$ by $\left[ \bar{L} , \xx \right]$, and call $\left[ \bar{L} , \xx \right]$ the \emph{Mumford extension} of $\bar{L}$. It is not difficult to show the following functoriality property. Let $S' \hookrightarrow \xx'$ be another open immersion of smooth complex varieties with $B'=\xx' \setminus S'$ a normal crossings divisor, and let $f \colon \xx' \to \xx$ be a morphism such that $f^{-1}B \subseteq B'$. Assume that $\bar{L}$ is a smooth hermitian line bundle on $S$ with logarithmic growth on $\xx$. Then $f^*\bar{L}$ is a smooth hermitian line bundle on $S'$ with logarithmic growth on $\xx'$, and we have the identity $\left[ f^*\bar{L},\xx' \right] = f^*\left[ \bar{L},\xx \right]$ in $\Pic(\xx')$. 
\begin{definition} \label{MLextension} We say that $\bar{L}$ \emph{has a Mumford-Lear extension on $\xx$} if there exists a closed subset $\Sigma \subset \xx$, which is either empty or of codimension at least two in $\xx$, and a positive integer $e$ such that the smooth hermitian line bundle $\bar{L}^{\otimes e}$ has logarithmic growth on $\xx \setminus \Sigma$. Note that by Hartogs's theorem the Mumford extension $\left[ \bar{L}^{\otimes e},\xx \setminus \Sigma \right]$ has a unique extension $M$ as a line bundle over $\xx$. We define $\left[ \bar{L},\xx \right]$ to be the element $M^{\otimes 1/e} $ of $ \Pic_\qq(\xx)$. This element is independent of the choice of $e$ and $\Sigma$, and will be called the Mumford-Lear extension of $\bar{L}$ over $\xx$.  
\end{definition}
Let $\I$ be a  cofiltered category of regular \emph{nc}-models of $S$. Recall that all maps in $\I$ are given by proper birational morphisms that restrict to the identity on $S$. Let $\bar{L}=(L,\|\cdot\|)$ be a smooth hermitian line bundle on $S$, and assume that $\bar{L}$ has a Mumford-Lear extension on \emph{all} models $\xx \in \I$. We already noted in the Introduction that the formation of the Mumford-Lear extension $\left[ \bar{L},\xx \right]$  is in general not compatible with pullback along maps $\xx' \to \xx$ in $\I$. However, the Mumford-Lear extensions do have a good functoriality behavior with respect to pushforward. More precisely, we have the following result (cf. \cite[Proposition~3.15]{bkk}).
\begin{prop} \label{create-b-line} Assume that the smooth hermitian line bundle $\bar{L}$ has a Mumford-Lear extension on all models $\xx \in \I$. The tuple of Mumford-Lear extensions $\left( \left[ \bar{L},\xx \right] \right)_{\xx \in \I}$ is then a b-line bundle on $\I$, that is, an element of $\bPic_\qq(\I)$.
\end{prop}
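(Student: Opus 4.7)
The plan is to exhibit the claimed b-line bundle as the class of a b-divisor cut out by a single rational section. Using the natural identification $\bPic_\qq(\I)=\varprojlim_{\xx\in\I}\Pic_\qq(\xx)$ with transition maps given by pushforward of Weil divisor classes, one is reduced to showing that for every morphism $\varphi\colon \xx'\to\xx$ in $\I$ the equality $\varphi_*\left[\bar{L},\xx'\right]=\left[\bar{L},\xx\right]$ holds in $\Pic_\qq(\xx)$. For this I would fix, once and for all, a non-zero rational section $s$ of $L$, and define $D_\xx\in\Div_\qq(\xx)$ to be the $\qq$-divisor of $s$ viewed as a rational section of the $\qq$-line bundle $\left[\bar{L},\xx\right]$. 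Since $D_\xx$ represents $\left[\bar{L},\xx\right]$ in $\Pic_\qq(\xx)$, it is enough to prove the sharper divisorial identity $\varphi_*D_{\xx'}=D_\xx$ in $\Div_\qq(\xx)$.

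To verify this, I would fix $\varphi\colon \xx'\to\xx$ and pick a common integer $e\geq 1$ together with closed subsets $\Sigma\subset\xx$ and $\Sigma'\subset\xx'$ of codimension at least two such that $\bar{L}^{\otimes e}$ has logarithmic growth on both $\xx\setminus\Sigma$ and $\xx'\setminus\Sigma'$. Working with the integral divisors $eD_{\xx'}$ and $\varphi^{*}(eD_\xx)$ on $\xx'$, the functoriality statement for Mumford extensions recorded after Definition \ref{loggrowth} identifies the line bundles $\left[\bar{L}^{\otimes e},\xx'\right]$ and $\varphi^{*}\left[\bar{L}^{\otimes e},\xx\right]$ on the open set $U'=\xx'\setminus(\Sigma'\cup\varphi^{-1}(\Sigma))$, and under this identification the rational section $s^{\otimes e}$ on the two sides corresponds to the same rational section. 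Consequently $eD_{\xx'}$ and $\varphi^{*}(eD_\xx)$ agree on $U'$, so their difference is supported on the codimension-one components of $\Sigma'\cup\varphi^{-1}(\Sigma)$.

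To finish, I would observe that $\Sigma'$ has codimension at least two in $\xx'$ and therefore contributes nothing, while the codimension-one components of $\varphi^{-1}(\Sigma)$ must all be $\varphi$-exceptional, since $\varphi$ is an isomorphism outside its exceptional locus and $\Sigma$ has codimension at least two in $\xx$. Thus the difference $eD_{\xx'}-\varphi^{*}(eD_\xx)$ is supported on $\varphi$-exceptional prime divisors. Applying $\varphi_*$, these exceptional components vanish; combining with the identity $\varphi_*\varphi^{*}=\mathrm{id}$ on Weil divisors, valid for proper birational morphisms of smooth varieties, one obtains $\varphi_*(eD_{\xx'})=eD_\xx$, and dividing by $e$ yields $\varphi_*D_{\xx'}=D_\xx$. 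The main obstacle is conceptual rather than technical: one has to pin down precisely where the failure of $\varphi^{*}\left[\bar{L},\xx\right]=\left[\bar{L},\xx'\right]$ lives. The argument above localizes that failure to $\varphi$-exceptional divisors, which is exactly what makes pushforward (and not pullback) the correct functoriality for the Mumford-Lear construction, thereby producing a well-defined element of $\bPic_\qq(\I)$.
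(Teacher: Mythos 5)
Your proof is correct and follows essentially the same route as the paper: both arguments fix a rational section $s$, reduce to the divisorial identity $\varphi_*\divisor_{\xx'}(s)=\divisor_\xx(s)$, and deduce it from the functoriality of the Mumford extension over the locus where $\varphi$ is an isomorphism and the metric has logarithmic growth. The only cosmetic difference is that you compare $D_{\xx'}$ with $\varphi^*D_\xx$ upstairs on $\xx'$ and invoke $\varphi_*\varphi^*=\mathrm{id}$ together with the vanishing of pushforwards of $\varphi$-exceptional divisors, whereas the paper restricts to an open subset $U\subset\xx$ with codimension-two complement and recovers $\varphi_*D_{\xx'}$ as the Zariski closure of its restriction to $U$.
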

The proof uses two lemmas.
\begin{lem}  \label{compatibleI} Let $\varphi \colon \xx' \to \xx$ be a map in $\I$, and let $U \subset \xx$ be an open subset such that $\varphi$ is invertible on $U$ and such that the set $\xx \setminus U$ is empty or has codimension two in $\xx$. Let $U' \supset \varphi^{-1}U$ be an open subset of $\xx'$ containing $\varphi^{-1}U$ and let $i \colon U \to U'$ be the map induced from $\varphi^{-1}$ on $U$. Let $D' \in \Div_\qq(\xx')$ be a $\qq$-divisor on $\xx'$. Then the equalities $(\varphi_*D')|_{U} = i^*(D'|_{U'})$ and $\varphi_*D' = \overline{i^*(D'|_{U'})}$ hold. In particular, the diagram 
\[ \xymatrix{ \Pic_\qq(\xx') \ar[d]^{\varphi_*} \ar[r] & \Pic_\qq(U') \ar[d]^{i^*} \\
\Pic_\qq(\xx) \ar[r]^\sim    & \Pic_\qq(U) } \]
 is commutative, where the horizontal arrows are the restriction maps.
\end{lem}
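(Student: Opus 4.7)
The plan is to establish the divisor-level identity $(\varphi_*D')|_U = i^*(D'|_{U'})$ first, and then derive both the closure statement and the commutativity of the $\Pic_\qq$-diagram from it. By $\qq$-linearity of pushforward and of pullback along the open immersion $i$, it suffices to prove this identity when $D' = Z'$ is a single prime Weil divisor on $\xx'$ (recall that on the smooth varieties in $\I$, Cartier and Weil divisors agree, so $\Div_\qq(\xx')$ is generated by primes).

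For such a $Z'$, I would split into two cases according to whether $Z'$ meets the open set $\varphi^{-1}(U) \subset \xx'$. In the first case $Z' \cap \varphi^{-1}(U) \neq \emptyset$, the fact that $\varphi$ restricts to an isomorphism $\varphi^{-1}(U) \isom U$ forces $\varphi|_{Z'}$ to be birational onto its image: properness of $\varphi$ (since models in $\I$ are complete) makes $\varphi(Z')$ closed, and $\varphi(Z') \cap U = \varphi(Z' \cap \varphi^{-1}(U))$ is a non-empty open prime divisor of the irreducible set $\varphi(Z')$, hence dense in it. Thus $\varphi_* Z' = \varphi(Z')$ with multiplicity one, and its restriction to $U$ is $\varphi(Z' \cap \varphi^{-1}(U))$. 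On the other hand $Z'|_{U'} = Z' \cap U'$, and the pullback along $i = \varphi^{-1}|_U$, whose image is $\varphi^{-1}(U) \subset U'$, equals $i^{-1}(Z' \cap U' \cap \varphi^{-1}(U)) = \varphi(Z' \cap \varphi^{-1}(U))$, matching the other side. In the second case $Z' \cap \varphi^{-1}(U) = \emptyset$, so $\varphi(Z') \subset \xx \setminus U$ which has codimension $\geq 2$; hence $\varphi_* Z' = 0$ and consequently $(\varphi_* Z')|_U = 0$, while $i^*(Z'|_{U'}) = 0$ as well since $i(U) \cap Z' = \varphi^{-1}(U) \cap Z' = \emptyset$.

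For the closure identity $\varphi_* D' = \overline{i^*(D'|_{U'})}$, the key observation is that because $\xx \setminus U$ has codimension at least two in $\xx$, every prime Weil divisor of $\xx$ meets $U$, so the restriction map $\Div_\qq(\xx) \to \Div_\qq(U)$ is an isomorphism with inverse given by Zariski closure in $\xx$. Applying this inverse to the identity of the first step immediately yields the desired formula. The commutativity of the diagram in $\Pic_\qq$ then descends by picking a (Cartier, hence Weil) representative of a class in $\Pic_\qq(\xx')$ and invoking the divisorial identity just proved, using that restriction $\Pic_\qq(\xx) \to \Pic_\qq(U)$ is an isomorphism by the same codimension argument.

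I do not anticipate any serious obstacle: the statement is essentially a bookkeeping exercise about birational pushforward under the thinness hypothesis on $\xx \setminus U$. The mildest delicate point is that $U'$ is allowed to be strictly larger than $\varphi^{-1}(U)$, but this is harmless because $i(U) = \varphi^{-1}(U)$, so any prime components of $D'|_{U'}$ supported on $U' \setminus \varphi^{-1}(U)$ are annihilated by $i^*$ and never enter the comparison.
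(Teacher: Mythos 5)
Your proof is correct and follows essentially the same route as the paper: the paper condenses the first identity into the chain $i^*(D'|_{U'}) = (\varphi^{-1})^*(D'|_{\varphi^{-1}U}) = \varphi_*(D'|_{\varphi^{-1}U}) = (\varphi_*D')|_U$ and then takes Zariski closures, which is exactly what your prime-by-prime case analysis (components meeting $\varphi^{-1}U$ versus components pushing forward into the codimension-two set $\xx\setminus U$) verifies in expanded form. No gaps; your extra care about $U'$ possibly being larger than $\varphi^{-1}U$ and about the restriction $\Pic_\qq(\xx)\to\Pic_\qq(U)$ being an isomorphism is implicit in the paper's argument.
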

\begin{proof} The first assertion follows from the chain of equalities
\[ \begin{split} i^*(D'|_{U'}) & = (\varphi^{-1})^*(D'|_{\varphi^{-1}U}) \\
 & = \varphi_*( D'|_{\varphi^{-1}U} ) \\
 & = (\varphi_*D')|_U \, .
\end{split} \]
The second assertion follows by taking Zariski closure.
\end{proof}
\begin{lem} \label{compatibleII} Let $s$ be a non-zero rational section of $L$ on $S$. Let $\varphi \colon \xx' \to \xx$ be a map in $\I$, and let $\divisor_\xx(s)$ (resp. $\divisor_{\xx'}(s)$) be the divisor of $s$ on $\xx$ (resp. $\xx'$), when viewed as a rational section of the Mumford-Lear extension $\left[ \bar{L},\xx \right]$ (resp. $\left[ \bar{L},\xx' \right]$). Then the equality $\varphi_* (\divisor_{\xx'}(s)) = \divisor_\xx(s)$ holds in $\Div_\qq(\xx)$. In particular, the tuple $\bdivisor(s) = (\divisor_\xx(s))_{\xx \in \I}$ is an element of $\bDiv_\qq(\I)$.
\end{lem}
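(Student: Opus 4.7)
The plan is to reduce the claimed identity $\varphi_*\divisor_{\xx'}(s) = \divisor_\xx(s)$ to a statement on a codimension-two open subset of $\xx$ where $\varphi$ is invertible, and then to invoke Lemma \ref{compatibleI}. First I would let $U \subset \xx$ denote the locus on which the rational inverse of $\varphi$ is defined as a morphism; since $\xx$ is smooth and $\varphi$ is proper birational, $\xx \setminus U$ has codimension at least two, so $U$ meets the hypothesis of Lemma \ref{compatibleI}. Setting $U' = \varphi^{-1}(U)$, the map $\varphi|_{U'} \colon U' \isom U$ is an isomorphism with inverse $i \colon U \to U'$.

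The main substantive step is then to verify that the two restrictions $[\bar{L}, \xx]|_U$ and $[\bar{L}, \xx']|_{U'}$ of the Mumford-Lear extensions correspond under this isomorphism. I would choose a common positive integer $e$ together with closed subsets $\Sigma \subset \xx$ and $\Sigma' \subset \xx'$ of codimension at least two such that $\bar{L}^{\otimes e}$ has genuine logarithmic growth on $\xx \setminus \Sigma$ and on $\xx' \setminus \Sigma'$ (a common $e$ is obtained by taking a least common multiple). On the open subset $W' := U' \setminus (\varphi^{-1}(\Sigma \cap U) \cup \Sigma')$, whose complement in $U'$ still has codimension at least two, $\varphi$ restricts to an isomorphism onto $W := U \setminus \Sigma$, and both instances of $\bar{L}^{\otimes e}$ have genuine log growth there; the functoriality of ordinary Mumford extensions stated after Definition \ref{loggrowth} therefore yields $[\bar{L}^{\otimes e}, \xx' \setminus \Sigma']|_{W'} = (\varphi|_{W'})^*[\bar{L}^{\otimes e}, \xx \setminus \Sigma]|_W$. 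Uniqueness of the Hartogs extension across the codimension-two loci $U \setminus W$ and $U' \setminus W'$ then propagates this identification to the full Mumford-Lear extensions on $U$ and $U'$.

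It follows that $\divisor_\xx(s)|_U = i^*(\divisor_{\xx'}(s)|_{U'})$ in $\Div_\qq(U)$, and the second assertion of Lemma \ref{compatibleI} applied to $D' = \divisor_{\xx'}(s)$ gives $\varphi_*\divisor_{\xx'}(s) = \overline{i^*(\divisor_{\xx'}(s)|_{U'})} = \overline{\divisor_\xx(s)|_U}$. Since $\xx \setminus U$ has codimension at least two in $\xx$, no prime component of the $\qq$-divisor $\divisor_\xx(s)$ is contained in $\xx \setminus U$, and the Zariski closure thus recovers $\divisor_\xx(s)$ itself, establishing the desired equality. The ``in particular'' clause that $\bdivisor(s) \in \bDiv_\qq(\I)$ is then immediate from this compatibility under every morphism of $\I$ together with the description of $\bDiv_\qq(\I)$ as the projective limit with transition maps given by proper pushforward. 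The main obstacle is the middle step, namely matching the two Mumford-Lear extensions after restriction to $U$: this requires combining the functoriality of the log-growth Mumford extension on the smaller open $W$ with the Hartogs-type uniqueness used to pass from the Mumford extension to the Mumford-Lear extension across $\Sigma$ and $\Sigma'$.
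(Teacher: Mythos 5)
Your argument is correct and follows essentially the same route as the paper's proof: restrict to an open $U\subset\xx$ with codimension-two complement on which $\varphi$ is invertible, identify the two Mumford--Lear extensions over $U\simeq U'$, apply Lemma \ref{compatibleI}, and recover $\divisor_\xx(s)$ by Zariski closure. The paper states the identification $i^*\left[\bar L,\xx'\right]|_{U'}=\left[\bar L, U\right]$ more tersely, whereas you justify it via the common power $e$, the log-growth loci and Hartogs uniqueness; this is exactly the content implicit in the paper's step (your $W$ should strictly also exclude $\varphi(\Sigma'\cap U')$, but that set has codimension two and does not affect the argument).
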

\begin{proof} Let $U \subset \xx$ be an open subset such that $\xx \setminus U$ is empty or has codimension two, $\varphi$ is invertible on $U$, and $\bar{L}$ has logarithmic growth on $U$. We note that such an open subset exists by Zariski's Main Theorem. Let $U' = \varphi^{-1}U$ and let $i \colon U \to U'$ be the induced map. By Lemma \ref{compatibleI} we have that $\varphi_* (\divisor_{\xx'}(s)) = \overline{ i^*(\divisor_{\xx'}(s)|_{U'}  )}$. Now $\left[ \bar{L},\xx' \right]|_{U'} = \left[ \bar{L},U'\right]$ and as $\varphi|_{U'}$ is an isomorphism we have $\left[ \bar{L},U' \right] = (\varphi|_{U'})^* \left[ \bar{L},U \right]$. We derive that $i^*\left[ \bar{L},\xx' \right]|_{U'}=\left[ \bar{L},U \right]$ and more precisely the equality $i^*(\divisor_{\xx'}(s)|_{U'}) = \divisor_\xx(s)|_U$ holds. We find that $\varphi_* (\divisor_{\xx'}(s)) = \overline{ \divisor_\xx(s)|_U} = \divisor_\xx(s)$ as was to be shown.
\end{proof}
We call $\bdivisor(s) \in \Div_\qq(\I)$ the b-divisor associated to the rational section $s$ of $L$. Its image $\oo(\bdivisor(s))=(\oo(\divisor_\xx(s)))_{\xx \in \I}$ in $\bPic_\qq(\I)$ is clearly independent of the choice of $s$, and is called the b-Mumford-Lear extension of the hermitian line bundle $\bar{L}$. We denote the b-Mumford-Lear extension of $\bar{L}$ on $\I$ by $[\bar{L},\I]$, or just by $[\bar{L}]$ if the category $\I$ is clear from the context. For each $\xx \in \I$ we clearly have $ \left[ \bar{L},\xx \right] =\oo(\divisor_\xx(s))$ and from this Proposition \ref{create-b-line} immediately follows.

\section{Models of surfaces, and their skeleta} \label{sec:model_skel}

We assume from now on that our smooth connected complex algebraic variety $S$ is a surface, which is moreover equipped with a smooth proper morphism $\pi \colon S \to C^0$ onto a connected smooth complex curve $C^0$. Denote by $C^0 \hookrightarrow C$ the (unique) smooth completion of the curve $C^0$. A regular \emph{nc}-model of $S$ \emph{over} $C$ is a regular \emph{nc}-model $\xx$ of $S$ such that the morphism $\pi \colon S \to C^0$ extends (uniquely) into a morphism $\xx \to C$. We will work with a cofiltered category $\I$ all of whose objects are regular \emph{nc}-models of $S$ over $C$ (for example $\I$ could be the category of all regular \emph{nc}-models of $S$ over $C$).  Then, using the multiplicities of irreducible components in the special fibers of our models $\xx$ over the set of cusps $C\setminus C^0$, we can, following for example \cite{bn} and \cite{mn}, attach a natural metric structure $\HH$ to our category $\I$. This metric structure will help us to better understand the group of b-divisors and the partial intersection pairing on it in this particular setting. 

For each regular \emph{nc}-model $\xx$ of $S$ over $C$ we denote by $V(\xx)$ the set of irreducible components of the reduced boundary divisor $B(\xx)=\xx \setminus S$ of $S$ in $\xx$, and by $\Sigma(\xx)$ the set of singular points of $B(\xx)$. We recall that $V(\xx)$ resp.\ $\Sigma(\xx)$ are naturally the vertex set resp.\ the edge set of a natural finite graph $G(\xx)$, called the \emph{dual graph} attached to $B(\xx)$. Here for each $e \in \Sigma(\xx)$ the end-points of $e$ are the irreducible components of $B(\xx)$ on which $e$ lies. We equip the dual graph $G(\xx)$ with a structure of weighted graph as follows. Let $e$ be an edge of $G(\xx)$, with end-points $x, y \in V(\xx)$ (which could be equal). Then $e$ is assigned weight $1/mn$, where $m, n \in \zz_{>0}$ are the multiplicities of the irreducible components $x, y$ in the special fiber of the morphism $\xx \to C$ that $x, y$ belong to. From the weighted graph $G(\xx)$ we naturally obtain a metrized graph $\Gamma(\xx)$, containing $V(\xx)$ as a distinguished subset, called the \emph{skeleton} of the model $\xx$ over $C$. We note that in contrast to the usual literature, our (metrized or weighted) graphs are in general not connected. The sets of connected components of the graph $G(\xx)$ and of the metrized graph $\Gamma(\xx)$ are in natural one-to-one correspondence with the set of cusps $C \setminus C^0$.

We note that a morphism $\xx' \to \xx$ of regular \emph{nc}-models of $S$ over $C$ is a finite composition of point blow-ups. In particular, for each birational morphism $\xx' \to \xx$ of regular \emph{nc}-models of $S$ over $C$, strict transform of irreducible boundary components gives rise to a natural map of sets $V(\xx) \to V(\xx')$, and a natural map of topological spaces $\Gamma(\xx) \to \Gamma(\xx')$. As it turns out, the latter map $\Gamma(\xx) \to \Gamma(\xx')$ is an isometric embedding. Indeed, without loss of generality we may assume that $\varphi \colon \xx' \to \xx$ is the blow-up in an intersection point $e \in \Sigma(\xx)$ of two irreducible components $x, y$ in $V(\xx)$ (which may be equal). Assuming that $x, y$ have multiplicities $m, n \in \zz_{>0}$ in their fiber, the exceptional divisor $z$ of $\varphi$ has multiplicity $m+n$ in its fiber. The identity
\[  \frac{1}{mn} = \frac{1}{m(n+m)} + \frac{1}{n(m+n)}  \]
then illustrates our claim. For more details we refer \cite[Section~2]{bn}.

Let $\varphi \colon \xx' \to \xx$ be a morphism of regular \emph{nc}-models of $S$ over $C$. We then have a natural linear pullback map $\varphi^*  \colon \qq^{V(\xx)} \to \qq^{V(\xx')}$. To describe $\varphi^*$ it suffices to consider the case that $\varphi$ is a simple point blow-up. If $\varphi$ is the blow-up in a singular point of a special fiber, then $\varphi^*$ is just given by linear interpolation with respect to the natural edge weights on  $G(\xx')$ determined above. If $\varphi$ is the blow-up in a regular point of the special fiber, and $g \in \qq^{V(\xx)}$ is a map, then $\varphi^*(g)$ is just given by extending $g$ over the exceptional divisor of $\varphi$ by the value of $g$ on the irreducible component of the boundary of $\xx$ that contains the blown-up point. 

For each regular \emph{nc}-model $\xx$ of $S$ over $C$ we have an isomorphism of groups 
\[ \gamma_\xx \colon \Div_\qq(\xx) \isom \Div_\qq(S) \oplus \qq^{V(\xx)}  \]
given as follows. The first component is given by restriction of $\qq$-Cartier divisors from $\xx$ to $S$. For $D \in \Div_\qq(\xx)$ we can write $D = \overline{D|_S} + \sum_{y \in V(\xx)} a(y) y$ for uniquely determined $a(y) \in \qq$. As the second component of $\gamma_\xx(D)$ we then take the map in $\qq^{V(\xx)}$ given by $y \mapsto a(y)/v_\xx(y)$, where $v_\xx(y) \in \zz_{>0}$ is the multiplicity of $y$ in the special fiber of $\xx$. Our choice of normalization on the second component is justified by the next lemma.
\begin{lem} \label{compatible} Let $\varphi \colon \xx' \to \xx$ be a morphism of regular \emph{nc}-models of $S$ over $C$. Then the isomorphisms $\gamma_\xx$ and $\gamma_{\xx'}$ are compatible with the pullback map $\varphi^* \colon \Div_\qq(\xx) \to \Div_\qq(\xx')$ and the linear interpolation map $\varphi^* \colon \qq^{V(\xx)} \to \qq^{V(\xx')}$.
\end{lem}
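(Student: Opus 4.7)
The plan is to reduce the claim to the case where $\varphi \colon \xx' \to \xx$ is a single point blow-up centered at a point $p \in B(\xx)$, since every morphism in $\I$ factors as a finite composition of such blow-ups, and the formations on both sides of the diagram commute with composition. The first coordinate of $\gamma_\xx$ is simply restriction of $\qq$-Cartier divisors to $S$, and since $\varphi$ restricts to the identity on $S$, one has $(\varphi^*D)|_S = D|_S$ for every $D \in \Div_\qq(\xx)$. So only the second (vertical) coordinate requires work.

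For the second coordinate, I would write $D = \overline{D|_S} + \sum_{y \in V(\xx)} a(y)\, y$, pull back to $\xx'$ componentwise, and track multiplicities. There are two geometric sub-cases. First, if $p$ is a regular point of $B(\xx)$ lying on a unique component $y_0$ of fiber-multiplicity $m = v_\xx(y_0)$, then the exceptional divisor $z$ again has fiber-multiplicity $m$, and one has $\varphi^* y_0 = \tilde y_0 + z$, $\varphi^* y = \tilde y$ for $y \neq y_0$; the coefficient of $z$ in the vertical part of $\varphi^*D$ is $a(y_0)$, and division by $v_{\xx'}(z) = m$ yields exactly $g(y_0)$, matching the ``extend by $g(y_0)$'' prescription. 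Second, if $p$ sits at the intersection of distinct components $x,y$ with fiber-multiplicities $m,n$, then $z$ has multiplicity $m+n$, one has $\varphi^* x = \tilde x + z$, $\varphi^* y = \tilde y + z$, and $\varphi^* w = \tilde w$ otherwise; the coefficient of $z$ in the vertical part of $\varphi^*D$ is $a(x) + a(y)$, and division by $m+n$ produces $(m\,g(x) + n\,g(y))/(m+n)$.

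To finish the singular-point case I would then identify this expression with the linear interpolation of $g$ at the position of $z$ on the edge $[xy]$, using the edge-weight data given just above the lemma: the edge $[xy]$ has total length $1/(mn)$ and $z$ subdivides it into subsegments of lengths $1/(m(m+n))$ and $1/(n(m+n))$, so the weight of $g(x)$ in the interpolation is the relative length $(1/(n(m+n)))/(1/(mn)) = m/(m+n)$ and similarly for $g(y)$. The main content, and also the main obstacle if one gets the bookkeeping slightly wrong, is precisely this matching between the multiplicity normalizations $v_\xx(y)$ built into $\gamma_\xx$ and the edge-length weights defining $\varphi^*$ on $\qq^{V(\xx)}$: the normalization in the definition of $\gamma_\xx$ is engineered exactly so that pullback of $\qq$-Cartier divisors corresponds, under the isomorphism, to linear interpolation along the skeleton.
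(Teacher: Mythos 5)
Your proposal is correct and follows essentially the same route as the paper: reduce to a single point blow-up and check that the coefficient $(a(x)+a(y))/(m+n)$ of the exceptional divisor, after the $v_{\xx'}(z)=m+n$ normalization, agrees with the linear interpolation of $a(x)/m$ and $a(y)/n$ along the edge of lengths $1/(m(m+n))$ and $1/(n(m+n))$. The only (harmless) additions are your explicit treatment of the first coordinate and of blow-ups at regular boundary points, which the paper leaves to the reader; note only that the two components meeting at the blown-up point are allowed to coincide, a case your wording excludes but which causes no difficulty.
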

\begin{proof} One quickly reduces to the case that $\varphi \colon \xx' \to \xx$ is a simple point blow-up, where the point $e$ that is blown up is an intersection point of two irreducible components $x, y \in V(\xx)$. Let $D \in \Div_\qq(\xx)$ and let $z \in V(\xx')$ denote the exceptional divisor of $\varphi$. If in $D$ component $x$ has multiplicity $a$ and $y$ has multiplicity $b$ then the multiplicity of $z$ in $\pi^*D$ is $a+b$. Assume $x$ has multiplicity $m$ in its fiber, and $y$ has multiplicity $n$ in its fiber. Then $z$ has multiplicity $m+n$ in its fiber. It follows that the second component  of $\gamma_\xx(D)$ has value $a/m$ at $x$ and $b/n$ at $y$, and that the second component of $\gamma_{\xx'}(\varphi^*D)$ has the value $(a+b)/(m+n)$ at $z$. Now recall from our constructions above that the edge between $z$ and $x$ has weight $1/m(n+m)$, and the edge between $z$ and $y$ has weight $1/n(m+n)$. We see that $(a+b)/(m+n)$ is precisely the value at $z$ that linearly interpolates between the value $a/m$ at $x$ and the value $b/n$ at $y$. 
\end{proof}

Let $\I$ be a cofiltered category of regular \emph{nc}-models of $S$ over $C$. We then put
\[ \HH= \varinjlim_{\xx \in \I} \Gamma(\xx) \, , \]
where the direct limit is taken in the category of topological spaces, along the natural embeddings $\Gamma(\xx) \to \Gamma(\xx')$. There is then a natural metric on $\HH$ for which for each $\xx \in \I$ the canonical map $\Gamma(\xx) \to \HH$ is an isometric embedding, by \cite[Theorem 2.3.3]{bn}.  We call the subset
\[ \HH_\qq = \varinjlim_{\xx \in \I} V(\xx) \subset \varinjlim_{\xx \in \I} \Gamma(\xx) = \HH  \]
the rational support of $\HH$. We put
\[ \mathrm{PA}_\qq(\HH) = \varinjlim_{\xx \in \I} \qq^{V(\xx)}  \]
in the category of $\qq$-vector spaces, where the transition maps are given by linear interpolation. Recall that $\cDiv_\qq(\I) = \varinjlim \Div_\qq(\xx)$, with transition maps given by pullback. Lemma \ref{compatible} then yields a natural isomorphism of $\qq$-vector spaces
\[  \gamma \colon \cDiv_\qq(\I) \isom  \Div_\qq(S) \oplus \mathrm{PA}_\qq(\HH) \]
by assembling all the various $\gamma_\xx$ together.

For the b-divisors on $\I$ we can say the following. Let $\varphi \colon \xx' \to \xx$ again be a morphism of regular \emph{nc}-models of $S$ over $C$. Then one verifies that the isomorphisms $\gamma_\xx$ and $\gamma_{\xx'}$ are compatible with the pushforward map $\varphi_* \colon \Div_\qq(\xx') \to \Div_\qq(\xx)$ and the natural restriction (or projection) map $\varphi_* \colon \qq^{V(\xx')} \to \qq^{V(\xx)}$.  As we can canonically write 
\[ \varprojlim_{\xx \in \I} \qq^{V(\xx)} =
\mathrm{Map}( \varinjlim_{\xx \in \I} V(\xx),\qq) = 
\mathrm{Map}(\HH_\qq,\qq)  \]
we obtain a natural isomorphism of $\qq$-vector spaces
\[  \delta \colon \bDiv_\qq(\I) \isom \Div_\qq(S) \oplus \varprojlim_{\xx \in \I} \qq^{V(\xx)} = \Div_\qq(S) \oplus \Map(\HH_\qq,\qq) \, ,   \]
characterizing b-divisors in a functional manner. Via the isomorphisms $\gamma, \delta$ the inclusion $\cDiv_\qq(\I) \hookrightarrow \bDiv_\qq(\I)$ corresponds to the inclusion of $\mathrm{PA}_\qq(\HH) $ into $\mathrm{Map}(\HH_\qq,\qq) $ as the subspace of  consisting of continuous piecewise affine functions on $\HH$ that are constant near the boundary of $\HH$, have finite corner locus supported on $\HH_\qq$, and are $\qq$-valued on $\HH_\qq$. 

Assume that the fibers of $\pi \colon S \to C^0$ have positive genus, and let $\xx_0$ denote the  minimal regular \emph{nc}-model of $S$ over $C$. As we will see below, it is useful to consider the cofiltered category $\I_0$ of all regular \emph{nc}-models $\xx$ of $S$ over $C$ for which the unique map $\xx \to \xx_0$ is a composition of point blow-ups $\varphi \colon \xx'' \to \xx'$ where each $\varphi$ is the blow-up of a singular point of the boundary divisor of $\xx'$. We call $\I_0$ the category of \emph{toroidal} or \emph{essential blow-ups}. Following \cite{bn} \cite{mn} we call the metric space 
\[ \Gamma = \varinjlim_{\xx \in \I_0} \Gamma(\xx) \]
the \emph{essential skeleton} of $S$. The essential skeleton $\Gamma$ is a compact  metrized graph, isometric to the skeleton $\Gamma(\xx_0)$. The connected components of $\Gamma$ are parametrized by the cusps of $C^0$ in $C$. Write 
\[ \Gamma_\qq =  \varinjlim_{\xx \in \I_0}  V(\xx) \, , \quad \mathrm{PA}_\qq(\Gamma) = \varinjlim_{\xx \in \I_0} \qq^{V(\xx)}  \, . \]
Then we obtain natural isomorphisms of $\qq$-vector spaces
\[  \gamma_0 \colon \cDiv_\qq(\I_0) \isom  \Div_\qq(S) \oplus \mathrm{PA}_\qq(\Gamma) \, , \, 
 \delta_0 \colon \bDiv_\qq(\I_0) \isom \Div_\qq(S) \oplus \Map(\Gamma_\qq,\qq)  \]
similar to the above $\gamma, \delta$.

To finish this section we put the above constructions into perspective by mentioning their close connection with the theory of Berkovich analytic curves. The Berkovich analytic viewpoint is the framework of the references \cite{bn},  \cite{bfj-sing}, \cite{bfj-val}, \cite{bfj-vol} and \cite{mn} mentioned above.  To fix ideas, assume that  the base curve $C^0$ has precisely one cusp~$\infty$ in its completion~$C$. Let $F$ denote the generic fiber of $\pi \colon S \to C^0$, viewed as a curve over the function field $\cc(C)$, and let $F_{\infty}$ be the curve obtained from $F$ by extending scalars to $\cc(C)_\infty$, the completion of $\cc(C)$ at the cusp $\infty$. Let $\mathfrak{S}$ denote the Berkovich analytification of the curve $F_\infty$. Then for each regular \emph{nc}-model $\xx$ of $S$ over $C$ there exists a canonical continuous retraction $\rho_\xx \colon \mathfrak{S} \to \Gamma(\xx)$, and if $\I$ denotes the category of all regular \emph{nc}-models of $S$ over $C$ these retractions piece together by \cite[Theorem~10]{ks} into a canonical isomorphism 
\[ \mathfrak{S} \isom \varprojlim_{\xx \in \I} \Gamma(\xx)   \]
of topological spaces. Via the canonical inclusion  $\HH=\varinjlim \Gamma(\xx) \hookrightarrow \varprojlim \Gamma(\xx) $ the space $\HH$ is then identified with a subspace of $\mathfrak{S}$. By \cite[Section~2.3]{bn} this subspace  is the subset of $\mathfrak{S}$ obtained by removing the points of type~I and~IV from $\mathfrak{S}$.

\section{Intersection pairing of b-divisors and skeleta} \label{bpairing}

Let $\I$ be a cofiltered category of regular \emph{nc}-models of the surface $S$ over the curve $C$. Our aim in this section is to write down the intersection pairing on the group of b-divisors $\bDiv_\qq(\I)$  in functional terms, using the metric space $\HH$ and its rational support $\HH_\qq$ from the previous section. The approach in this section is inspired by the theory developed in \cite[Section~2]{zh}. We start by giving a useful presentation of $\bDiv_\qq(\I)$ as classes of metrized $\qq$-Cartier divisors on $\I$.

Recall that we have natural isomorphisms of $\qq$-vector spaces
\[ \gamma \colon \cDiv_\qq(\I) \isom \Div_\qq(S) \oplus \mathrm{PA}_\qq (\HH)  \]
and
\[  \delta \colon \bDiv_\qq(\I) \isom  \Div_\qq(S)\oplus \mathrm{Map}(\HH_\qq,\qq) \, . \]
The map $\gamma$ can be viewed as the restriction of the map $\delta$ to $\cDiv_\qq(\I)$. Eliminating the role of the  group $\Div_\qq(S)$ we obtain an exact sequence 
\begin{equation} \label{exact} 0 \to \mathrm{PA}_\qq(\HH)  \to \cDiv_\qq(\I) \oplus \Map(\HH_\qq,\qq) \to \bDiv_\qq(\I) \to 0 
\end{equation}
where the first map is given by $g \mapsto (-\gamma^{-1}(0,g),g)$, and where the second map is given by $(\mathbb{D},g) \mapsto \mathbb{D} + \delta^{-1}(0,g)$. Drawing an analogy with the notions of metrized or adelic $\qq$-divisors, we call the vector space $\cDiv_\qq(\I) \oplus \Map(\HH_\qq,\qq)$ in the middle the space of \emph{metrized $\qq$-Cartier divisors on $\I$}. Thus, we can think of a b-divisor on $\I$ as a suitable class of metrized $\qq$-Cartier divisors on $\I$. 

Let $\Div_\qq(\HH_\qq)=\qq^{(\HH_\qq)}$ denote the group of $\qq$-divisors on $\HH_\qq$. There exists a natural (partial) symmetric bilinear pairing $i$ on  $\Div_\qq(\HH_\qq) \oplus \Map(\HH_\qq,\qq)$ given for all $D_1, D_2 \in \Div_\qq(\HH_\qq)$ and $g_1, g_2 \in \Map(\HH_\qq,\qq)$ by the prescription
\[ i(D_1 + g_1,D_2+g_2) = g_2(D_1) + g_1(D_2) + \lim_{\xx \in \I} \sum_{x, y \in V(\xx)} g_1(x)g_2(y)v_\xx(x)v_\xx(y)(x \cdot y) \, , \]
where as before $v_\xx(x), v_\xx(y) \in \zz_{>0}$ denote the multiplicities of the irreducible components $x, y \in V(\xx)$ in their special fiber of $\xx$ over $C$. Here $(x \cdot y)$ denotes the intersection product of the divisors $x, y \in V(\xx)$ on the surface $\xx$ and, as before, the limit is taken
in the sense of nets. As is the case with the pairing of b-divisors, the pairing $i$ is in general only partially defined. We claim that  the intersection pairing of b-divisors can be conveniently reformulated in terms of the pairing $i$.

First of all, note that we have a natural restriction homomorphism
\[
R \colon \cDiv_\qq(\I) \to \Div_\qq(\HH_\qq)
\]
given  by setting
\[ R(\mathbb{D}) = \sum_{x \in V(\xx)} (x \cdot \mathbb{D}_\xx)\,v_\xx(x) \, x \]
for any $\xx \in \I$ where the $\qq$-Cartier divisor $\mathbb{D}$ is realized. It is a simple check that the sum is indeed independent of the choice of $\xx$. The map $R$ vanishes on principal divisors, and hence induces a map
\[ \cPic_\qq(\I) \to \Div_\qq(\HH_\qq)  \]
that we somewhat abusively also denote by $R$.
\begin{lem} \label{inters_b_div} Let $\mathbb{D}_1 + g_1, \mathbb{D}_2 + g_2 \in \cDiv_\qq(\I) \oplus \Map(\HH_\qq,\qq)$ be two metrized $\qq$-Cartier divisors on $\I$. The intersection product of their classes in $\bDiv_\qq(\I)$ is given by the expression
\begin{equation} \label{product_expression} \begin{split} \mathbb{D}_1\cdot\mathbb{D}_2  +   i(R(\mathbb{D}_1)+g_1, & \,R(\mathbb{D}_2)+g_2) 
 = \mathbb{D}_1 \cdot \mathbb{D}_2 + g_2(R(\mathbb{D}_1)) + g_1(R(\mathbb{D}_2)) \\ &  +\lim_{\xx \in \I} \sum_{x,y \in V(\xx)} g_1(x)g_2(y)v_\xx(x)v_\xx(y)(x \cdot y)  \, . \end{split} 
 \end{equation}
In particular, the intersection product exists  if and only if the limit 
\[ \lim_{\xx \in \I} \sum_{x,y \in V(\xx)} g_1(x)g_2(y)v_\xx(x)v_\xx(y)(x \cdot y)
\]
exists.
\end{lem}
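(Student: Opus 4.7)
The plan is to realize both $\mathbb{D}_1$ and $\mathbb{D}_2$ on a single model $\xx_0\in\I$, pass to any $\xx\in\I$ dominating $\xx_0$ via some $\varphi\colon\xx\to\xx_0$, expand the intersection number on $\xx$ bilinearly into ``horizontal$\times$horizontal'', ``horizontal$\times$vertical'' and ``vertical$\times$vertical'' pieces, and then compare with the right-hand side of (\ref{product_expression}) piece by piece.

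By the description of $\delta$ from the previous section, the incarnation on such an $\xx$ of the b-divisor class of $\mathbb{D}_i+g_i$ is
\[
\varphi^*\mathbb{D}_{i,\xx_0}\;+\;\sum_{y\in V(\xx)} g_i(y)\,v_\xx(y)\,y\,.
\]
Expanding the intersection number on $\xx$ produces four summands. The horizontal-horizontal term $\varphi^*\mathbb{D}_{1,\xx_0}\cdot\varphi^*\mathbb{D}_{2,\xx_0}$ equals $\mathbb{D}_1\cdot\mathbb{D}_2$ by the projection formula for the birational morphism $\varphi$, and in particular is independent of $\xx$. Each mixed term has the shape $\sum_{y\in V(\xx)} g_2(y)\,v_\xx(y)\,(\varphi^*\mathbb{D}_{1,\xx_0}\cdot y)$, which by the definition of the restriction map $R$ is exactly $g_2(R(\mathbb{D}_1))$; another application of the projection formula shows that the divisor $R(\mathbb{D}_1)\in\Div_\qq(\HH_\qq)$ does not depend on the model chosen to realize $\mathbb{D}_1$, so this term is cofinally constant in $\xx$. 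By symmetry, the other mixed term stabilizes to $g_1(R(\mathbb{D}_2))$. The only piece that genuinely depends on $\xx$ is the vertical-vertical contribution $\sum_{x,y\in V(\xx)} g_1(x)g_2(y)\,v_\xx(x)v_\xx(y)\,(x\cdot y)$.

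Summing these four pieces and taking the limit over $\I$ in the sense of nets therefore reproduces the right-hand side of (\ref{product_expression}). Since three of the four summands are cofinally constant, the ``in particular'' assertion follows immediately: the intersection product exists in $\rr$ if and only if the vertical-vertical limit does.

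The main obstacle I foresee is bookkeeping around the two isomorphisms $\gamma$ and $\delta$: one must check carefully that the incarnation of $\delta^{-1}(0,g)$ on $\xx$ really is $\sum_{y\in V(\xx)} g(y)\,v_\xx(y)\,y$, with precisely this normalization by $v_\xx(y)$, and that the sum $\sum_x (x\cdot \mathbb{D}_\xx)\,v_\xx(x)\,x$ defining $R(\mathbb{D})$ is genuinely independent of the model $\xx$ realizing $\mathbb{D}$. Both facts reduce to the projection formula $\varphi_*(\varphi^*D\cdot E)=D\cdot\varphi_* E$ on surfaces together with the normalization convention built into $\gamma_\xx$, so once these are checked the proof amounts to a clean four-term expansion.
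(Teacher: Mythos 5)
Your proposal is correct and follows essentially the same route as the paper: compute the incarnation of each class on a model, expand the intersection number bilinearly into four terms, identify the Cartier--Cartier term with $\mathbb{D}_1\cdot\mathbb{D}_2$ and the mixed terms with $g_2(R(\mathbb{D}_1))$ and $g_1(R(\mathbb{D}_2))$, and observe that only the vertical--vertical sum varies with $\xx$ before passing to the limit. Your extra care in restricting to the cofinal subfamily of models dominating a common realization $\xx_0$ is a harmless (indeed slightly more explicit) version of what the paper does implicitly.
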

\begin{proof} Let $\xx \in \I$, and let $\mathbb{D}+g$ be a metrized $\qq$-Cartier divisor on $\I$. Then the incarnation on $\xx$ of its class  in $\bDiv_\qq(\I)$  is $\mathbb{D}_\xx + \sum_{x \in V(\xx)} g(x)\,v_\xx(x)\, x$. Now we have
\[ \begin{split} 
(\mathbb{D}_{1,\xx} & + \sum_{x \in V(\xx)}  g_1(x)\,v_\xx(x) \, x, 
\mathbb{D}_{2,\xx} + \sum_{x \in V(\xx)} g_2(x)\, v_\xx(x) \, x )  = (\mathbb{D}_{1,\xx},\mathbb{D}_{2,\xx}) \\ & + \sum_{x \in V(\xx)} g_2(x)\, v_\xx(x)\,(\mathbb{D}_{1,\xx} \cdot x)   + \sum_{x \in V(\xx)} g_1(x)\, v_\xx(x) \, (\mathbb{D}_{2,\xx} \cdot x) \\ & \hspace{1cm} + \sum_{x,y \in V(\xx)} g_1(x)g_2(y)v_\xx(x)v_\xx(y)(x \cdot y) \\
& = \mathbb{D}_1\cdot\mathbb{D}_2  + g_2(R(\mathbb{D}_1)) + g_1(R(\mathbb{D}_2)) + \sum_{x,y \in V(\xx)} g_1(x)g_2(y)v_\xx(x)v_\xx(y)(x \cdot y)  \, . 
\end{split} \]
Taking limits on the leftmost and rightmost sides gives the lemma.
\end{proof}
The limit in Lemma \ref{inters_b_div} can in many situations be conveniently rewritten using a discrete Laplacian.  Let $G$ be a \emph{simple} weighted graph (i.e., no loops and no multiple edges) with vertex set $V(G)$ and edge set $E(G)$ with weight function $\ell \colon E(G) \to \rr_{>0}$. For $x, y \in V(G)$ we set $Q(x,y)$ to be zero if there is no edge connecting $x, y$, we set $Q(x,y)=-1/\ell(e)$ if $x \neq y$ and $e \in E(G)$ is the (unique) edge connecting $x, y$, and we set $Q(x,y)=-\sum_{z \neq x} Q(x,z)$ if $x=y$. The matrix $Q=Q(x,y)$ is called the weighted Laplacian matrix of $G$. Now let  $\xx $ be a regular \emph{nc}-model of $S$ over $C$ such that the associated weighted graph $G(\xx)$ is simple. Let $x, y \in V(\xx)$ be  neighbours and let $e$ be the unique edge connecting $x, y$. Then we recall that $\ell(e)=1/v_\xx(x)v_\xx(y)$. We see that for such $\xx$ and $x, y \in V(\xx)$ we can simply write
\[ g_1(x)g_2(y)v_\xx(x)v_\xx(y)(x \cdot y)  = - g_1(x) g_2(y) Q_\xx(x,y) \, . \]
In particular, assume that the full subcategory $\I'$ of our chosen category $\I$ consisting of models $\xx$ where $G(\xx)$ is simple is cofinal in $\I$. Then we conclude that for all $D_1, D_2 \in \Div_\qq(\HH_\qq)$ and $g_1, g_2 \in \Map(\HH_\qq,\qq)$ the identity
\begin{equation} \label{with_laplacian} i(D_1 + g_1,D_2+g_2) = g_2(D_1) + g_1(D_2) - \lim_{\xx \in \I'} \sum_{x, y \in V(\xx)} g_1(x) g_2(y) Q_\xx(x,y)  
 \end{equation}
holds in $\rr$. 

Recall that we denoted by $\oo$ the canonical projection $\bDiv_\qq(\I) \to \bPic_\qq(\I)$. By slight abuse of notation we also denote by $\oo$ the induced projection map from the group of metrized $\qq$-Cartier divisors $\cDiv_\qq(\I) \oplus \Map(\HH_\qq,\qq)$ onto $\bPic_\qq(\I)$. The image $\oo(\mathbb{D}+g)$ of a metrized $\qq$-Cartier divisor $\mathbb{D}+g$ is often  somewhat suggestively denoted by $\oo(\mathbb{D})\otimes \oo(g)$, in order to stimulate thinking of elements of $\bPic_\qq(\I)$ as ``metrized $\qq$-line bundles'' in a certain sense. Our formalism then easily gives an expression for the intersection pairing of two b-line bundles $\oo(\mathbb{D}_1)\otimes \oo(g_1)$ and $\oo(\mathbb{D}_2) \otimes \oo(g_2)$ in $\bPic_\qq(\I)$. 

Actually we can be more precise. We observe that  the combinatorial term $i(R(\mathbb{D}_1)+g_1,R(\mathbb{D}_2)+g_2)$ can be canonically written as a finite sum of local contributions $i_{\mathfrak{c}}(R(\mathbb{D}_1)+g_1,R(\mathbb{D}_2)+g_2)$ where $\mathfrak{c}$ runs through the set $\mathfrak{C}$ of cusps of $C^0$ in $C$. Also we observe that for line bundles $\oo(\mathbb{D}_1), \oo(\mathbb{D}_2)$ on $\I$ we have a well-defined Deligne pairing $\pair{\oo(\mathbb{D}_1),\oo(\mathbb{D}_2)}$ in $\Pic_\qq(C)$, obtained by realizing $\mathbb{D}_1, \mathbb{D}_2$ on a common model $\xx$ of $S$ over $C$ and by applying the Deligne pairing along the unique map $\xx \to C$ extending our given map $S \to C^0$. Using these two observations we obtain a natural (partial) Deligne pairing
\[ \begin{split} \pair{\oo(\mathbb{D}_1) & \otimes \oo(g_1),\oo(\mathbb{D}_2) \otimes \oo(g_2)} \\ 
\hspace{2cm} & = \pair{\oo(\mathbb{D}_1),\oo(\mathbb{D}_2)} \otimes \oo\left(\sum_{\mathfrak{c} \in \mathfrak{C}} i_\mathfrak{c}(R(\mathbb{D}_1)+g_1,R(\mathbb{D}_2)+g_2) \cdot \mathfrak{c} \right) \end{split} \] 
in $\Pic_\rr(C)$ associated to the b-line bundles $\oo(\mathbb{D}_1) \otimes \oo( g_1)$ and $\oo(\mathbb{D}_2) \otimes \oo(g_2)$. The degree of this Deligne pairing is given by (\ref{product_expression}).

\section{Compactified b-divisors} \label{sec:comp}

In this section we consider in detail the situation where $\I$ is chosen to be the cofiltered category $\I_0$ of essential blow-ups. For our proof of Theorem \ref{chernweil} we will reduce to computing intersection pairings on this category. We recall that the associated metric space $\HH=\Gamma$ is now a (compact but not necessarily connected) metrized graph, endowed with a distinguished dense subset $\Gamma_\qq$, the rational support of $\Gamma$. We refer to \cite[Section~2]{bf}, \cite[Section~2]{cr} and \cite[Appendix]{zh} for the basic theory of compact metrized graphs that we will use here. We warn the reader that each of the references \cite{bf} \cite{cr} \cite{zh}  takes metrized graphs to be connected, but note at the same time that most definitions and statements from these references are readily generalized to the non-connected case. 

We write $\Div_\qq (\Gamma_\qq)=\qq^{(\Gamma_\qq)}$ for the group of $\qq$-divisors on $\Gamma_\qq$. For each $p \in \Gamma_\qq$ we denote by $\Dir(p)$ the finite set of directions emanating from $p$ in $\Gamma$. We denote by $\Zh(\Gamma)$ the set of continuous functions $f \colon \Gamma \to \rr$ for which there exists a finite vertex set $V_f \subset \Gamma_\qq$ containing all points of $\Gamma$ of degree $\neq 2$ such that (a) for all $p \in V_f$, and all directions $\Vec{v} \in \Dir(p)$, the directional derivative $D_{\Vec{v}}f(p)$ of $f$ in the direction of $\Vec{v}$ exists; (b) $f$ is twice differentiable on the complement of $V_f$; (c) the second order derivative $f''$ is bounded on the complement of $V_f$. By a slight abuse of terminology, we call $\Zh(\Gamma)$ the set of \emph{piecewise smooth functions} on $\Gamma$. We denote by $\Zh_\qq(\Gamma)$ the subset of $\Zh(\Gamma)$ of those piecewise smooth functions that are $\qq$-valued on $\Gamma_\qq$. For example, we have $\mathrm{PA}_\qq(\Gamma) \subset \Zh_\qq(\Gamma)$ as the set of continuous piecewise affine functions on $\Gamma$ with corner locus supported on $\Gamma_\qq$ and $\qq$-valued on $\Gamma_\qq$. The function space $\Zh(\Gamma)$ is equipped with a hybrid Laplacian operator $\Delta \colon \Zh(\Gamma) \to \Zh(\Gamma)^*$, consisting of both a continuous and a discrete part, given by 
\[  \Delta \, f = -f''(x) \, \d x -\sum_{p \in \Gamma_\qq} \sum_{\Vec{v} \in \Dir(p)} D_{\Vec{v}}f(p) \,\delta_p \]
for all $f \in \Zh(\Gamma)$. Here $\delta_p$ denotes the Dirac measure at $p$. The sum is finite, as $\sum_{\Vec{v} \in \Dir(p)} D_{\Vec{v}}f(p)$ vanishes for all $p \notin V_f$. We call a \emph{compactified $\qq$-divisor on $\Gamma$} any element of the direct sum 
\[ \overline{\Div}(\Gamma)=\Div_\qq(\Gamma_\qq)\oplus \Zh_\qq(\Gamma) \, . \]
By density of $\Gamma_\qq$ in $\Gamma$ we have a natural inclusion $\overline{\Div}(\Gamma) \subset \Div_\qq(\Gamma_\qq)\oplus \Map(\Gamma_\qq,\qq)$. In the previous section we have equipped the latter space with a partially defined symmetric bilinear pairing $i$ that served to make the intersection pairing of b-divisors explicit. We claim that the restriction of $i$ to the space $\overline{\Div}(\Gamma)$ is everywhere defined.
\begin{prop} \label{pairing_with_laplacian}
The restriction of the pairing $i$ to $\overline{\Div}(\Gamma)$ is everywhere defined. More precisely, for $D_1, D_2 \in \Div_\qq(\Gamma_\qq)$ and $g_1, g_2 \in \Zh_\qq(\Gamma)$ the identity
\[ i(D_1 + g_1,D_2+g_2) = g_2(D_1) + g_1(D_2) - \int_\Gamma g_1 \,\Delta \, g_2     \]
holds in $\rr$. 
\end{prop}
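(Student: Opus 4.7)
The plan is to use the Laplacian reformulation in equation (\ref{with_laplacian}), which converts the question into a convergence result for discrete graph Laplacians toward the continuous Laplacian $\Delta$ on $\Gamma$. Since the terms $g_2(D_1) + g_1(D_2)$ appear on both sides of the claimed identity, the assertion reduces to proving
\[
\lim_{\xx \in \I_0'} \sum_{x,y \in V(\xx)} g_1(x) g_2(y) Q_\xx(x,y) \;=\; \int_\Gamma g_1 \, \Delta g_2,
\]
where $\I_0' \subset \I_0$ denotes the full subcategory of essential blow-ups with simple weighted dual graph. I would first argue that $\I_0'$ is cofinal in $\I_0$: any loop or multiple edge of $G(\xx)$ can be split by additional blow-ups at singular points of $B(\xx)$. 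So formula (\ref{with_laplacian}) applies and one may harmlessly restrict the net to $\I_0'$.

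Fix $\varepsilon > 0$, and refine $\xx \in \I_0'$ so that $V(\xx)$ contains the finite set $V_{g_1} \cup V_{g_2}$ (which in particular contains every point of $\Gamma$ of degree $\neq 2$), and so that every edge of $G(\xx)$ has length less than $\varepsilon$. Using that the rows of $Q_\xx$ sum to zero, I would rewrite
\[
\sum_{x,y \in V(\xx)} g_1(x) g_2(y) Q_\xx(x,y) \;=\; \sum_{x \in V(\xx)} g_1(x) \sum_{e \ni x} \frac{g_2(x) - g_2(y_e)}{\ell(e)},
\]
with $y_e$ the other endpoint of $e$. On each edge $e$ whose interior lies in the complement of $V_{g_2}$, Taylor's theorem combined with the boundedness of $g_2''$ on smooth arcs gives
\[
\frac{g_2(x) - g_2(y_e)}{\ell(e)} \;=\; -D_{\vec{v}_e} g_2(x) \;-\; \tfrac{1}{2}\ell(e)\, g_2''(\xi_e)
\]
for some $\xi_e \in e$, where $\vec{v}_e$ is the initial direction from $x$ along $e$.

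Next I would split the outer sum into contributions from degree-$2$ vertices $x \notin V_{g_2}$ and from the fixed finite set $V_{g_2}$. At a degree-$2$ vertex $x \notin V_{g_2}$ the two directional derivatives cancel by smoothness of $g_2$ at $x$, leaving a second-order remainder; summing over such $x$ and reorganizing by edges yields a trapezoidal approximation of $-\int_\Gamma g_1 g_2''\,dx$, with error controlled uniformly by $\|g_1\|_\infty \|g_2''\|_\infty \,\mathrm{length}(\Gamma)\,\varepsilon$. At a point $p \in V_{g_2}$ the first-order terms do not cancel; once $V(\xx) \supset V_{g_2}$ and the edges at $p$ in $G(\xx)$ exhaust the directions $\Dir(p)$, they collect to $-g_1(p) \sum_{\vec{v} \in \Dir(p)} D_{\vec{v}} g_2(p)$, while the second-order correction at these finitely many points is $O(\varepsilon)$. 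Adding both contributions and comparing with $\Delta g_2 = -g_2''\,dx - \sum_p \sum_{\vec{v}\in\Dir(p)} D_{\vec{v}} g_2(p)\,\delta_p$ produces exactly $\int_\Gamma g_1 \, \Delta g_2$.

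The main obstacle is checking that the trapezoidal-type sum over degree-$2$ subdivision vertices, once reassembled by edges, really converges to $\int_\Gamma g_1 g_2''\,dx$. This is a Riemann-sum argument performed on each maximal smooth arc of $\Gamma \setminus V_{g_2}$, where $g_2''$ is bounded and $g_1$ is continuous; the finitely many arcs patch together under the uniform mesh control. Given this, the total error tends to zero with $\varepsilon$, which both establishes existence of the limit defining $i(D_1+g_1, D_2+g_2)$ on $\overline{\Div}(\Gamma)$ and delivers the claimed identity.
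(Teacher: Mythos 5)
Your reduction is exactly the paper's: you observe that the subcategory $\I_0'$ of models with simple dual graph is cofinal in $\I_0$, invoke (\ref{with_laplacian}), and thereby reduce everything to the convergence of $\sum_{x,y} g_1(x)g_2(y)Q_\xx(x,y)$ to $\int_\Gamma g_1\,\Delta\,g_2$. The difference is what happens next: the paper simply cites \cite[Theorem~5]{bf} or \cite[Corollary~3]{fab} for this spectral convergence of the discrete Laplacians, whereas you prove it from scratch by summation by parts (using that the rows of $Q_\xx$ sum to zero), Taylor expansion along each edge, cancellation of first-order terms at subdivision vertices of degree two, and a trapezoidal Riemann-sum limit on each smooth arc. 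This is essentially the content of the cited results, so your argument makes the proposition self-contained at the cost of redoing known work; the paper's version is shorter but leans on the literature. Your proof is correct in all its main steps, with one technical wrinkle worth flagging: in the definition of $\Zh(\Gamma)$ used here, $g_2''$ is only required to exist and be \emph{bounded} off $V_{g_2}$, and a bounded derivative need not be Riemann integrable (Volterra-type examples), so a Riemann sum with the uncontrolled Lagrange sample points $\xi_e$ is not automatically guaranteed to converge to the Lebesgue integral $\int_\Gamma g_1 g_2''\,\d x$. The clean fix is to use the integral form of the Taylor remainder, $g_2(y_e)-g_2(x)-\ell(e)D_{\vec{v}_e}g_2(x)=\int_0^{\ell(e)}(\ell(e)-t)\,g_2''\,\d t$, after which reassembling by edges expresses the second-order contribution as $\int_\Gamma w_\xx\, g_2''\,\d x$ with weights $w_\xx$ converging uniformly to $g_1$, and dominated convergence finishes the argument; for the piecewise quadratic Green's functions actually used later in the paper the issue is vacuous anyway.
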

\begin{proof} 
We remark that the full subcategory $\I_0'$ of $\I_0$ of essential blow-ups $\xx$ such that $G(\xx)$ is simple is cofinal in $\I_0$. 
For each weighted graph $G(\xx)$ with $\xx \in \I_0'$ we let as before $Q_\xx$ denote the weighted Laplacian matrix of $G(\xx)$. By (\ref{with_laplacian}) we have
\[ i(D_1 + g_1,D_2+g_2) = g_2(D_1) + g_1(D_2) - \lim_{\xx \in \I_0'} \sum_{x, y \in V(\xx)} g_1(x) g_2(y) Q_\xx(x,y)  \, , \]
where the limit is taken in the sense of nets.  Now it follows from \cite[Theorem~5]{bf} or \cite[Corollary~3]{fab} that for all $g_1, g_2 \in \Zh(\Gamma)$ the equality
\[  \lim_{\xx \in \I_0'} \sum_{x, y \in V(\xx)} g_1(x) g_2(y) \, Q_\xx (x,y) = \int_\Gamma g_1\, \Delta \, g_2    \]
holds in $\rr$. The proposition follows.
\end{proof}
We define a \emph{compactified b-divisor} on $\I_0$ to be a b-divisor on $\I_0$ represented by a metrized $\qq$-Cartier divisor $\mathbb{D}+g$ on $\I_0$ where the metric part $g$ is chosen from $\Zh_\qq(\Gamma)$. The group of compactified b-divisors on $\I_0$ is denoted by $\overline{\Div}(\I_0)$. We thus have following (\ref{exact}) an exact sequence
\begin{equation} \label{exact-bis} 0 \to \mathrm{PA}_\qq(\Gamma)  \to \cDiv_\qq(\I_0) \oplus \Zh_\qq(\Gamma) \to \overline{\Div}(\I_0) \to 0 
\end{equation}
where the first map is given by $g \mapsto (-\gamma^{-1}(0,g),g)$, and where the second map is given by $(\mathbb{D},g) \mapsto \mathbb{D} + \delta^{-1}(0,g)$. The image of $\overline{\Div}(\I_0)$ in $\bPic_\qq(\I_0)$ under $\oo$ is denoted by $\overline{\Pic}(\I_0)$ and is called the group of  \emph{compactified b-line bundles}. Note that we have natural inclusions $\cDiv_\qq(\I_0) \subset \overline{\Div}(\I_0)$ and $\cPic_\qq(\I_0) \subset \overline{\Pic}(\I_0)$.

The next result follows immediately from Lemma \ref{inters_b_div} and Proposition \ref{pairing_with_laplacian}.
\begin{cor} \label{productexists} Let $\mathbb{D}_1+g_1$ and $\mathbb{D}_2+g_2$ be metrized $\qq$-Cartier divisors on $\I_0$ with $g_1, g_2 \in \Zh_\qq(\Gamma)$. Then the intersection product of the associated compactified b-divisors in $\overline{\Div}(\I_0)$ exists, and is given by 
\begin{equation} \label{product}  \mathbb{D}_1\cdot \mathbb{D}_2  + g_1(R(\mathbb{D}_2)) + g_2(R(\mathbb{D}_1))  - \int_\Gamma g_1 \, \Delta \, g_2 \, . 
\end{equation}
More precisely, the Deligne pairing $\pair{ \oo( \mathbb{D}_1)\otimes \oo(g_1), \oo(\mathbb{D}_2)\otimes\oo(g_2)}$ exists in $\Pic_\rr(C)$, and its degree is given by (\ref{product}).
\end{cor}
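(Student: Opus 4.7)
The plan is to deduce Corollary \ref{productexists} by directly combining Lemma \ref{inters_b_div} with Proposition \ref{pairing_with_laplacian}, which together already reduce the claim to a bookkeeping exercise.

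First, I would invoke Lemma \ref{inters_b_div} applied to the metrized $\qq$-Cartier divisors $\mathbb{D}_1+g_1$ and $\mathbb{D}_2+g_2$ on $\I_0$. This gives the intersection product of the associated b-divisors as
\[
\mathbb{D}_1\cdot\mathbb{D}_2 + g_2(R(\mathbb{D}_1)) + g_1(R(\mathbb{D}_2)) + \lim_{\xx \in \I_0} \sum_{x,y \in V(\xx)} g_1(x)g_2(y)v_\xx(x)v_\xx(y)(x\cdot y),
\]
subject to the existence of the displayed limit in $\rr$. Hence everything comes down to controlling that limit.

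Next, to handle the limit I would observe that, by assumption, $g_1, g_2 \in \Zh_\qq(\Gamma)$, so that $R(\mathbb{D}_1)+g_1$ and $R(\mathbb{D}_2)+g_2$ lie in $\overline{\Div}(\Gamma)$. Proposition \ref{pairing_with_laplacian} then asserts that the pairing $i$ is defined on such inputs and equals
\[
i(R(\mathbb{D}_1)+g_1, R(\mathbb{D}_2)+g_2) = g_2(R(\mathbb{D}_1)) + g_1(R(\mathbb{D}_2)) - \int_\Gamma g_1\,\Delta\, g_2.
\]
Comparing this identity with the definition of $i$ from Section \ref{bpairing}, the combinatorial limit in Lemma \ref{inters_b_div} does exist and equals $-\int_\Gamma g_1\,\Delta\, g_2$. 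Substituting back recovers the formula (\ref{product}) for the intersection product. This is the entire content of the first assertion.

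For the Deligne pairing refinement, I would appeal to the construction given at the end of Section \ref{bpairing}: for b-line bundles of the form $\oo(\mathbb{D}_i)\otimes \oo(g_i)$, the partial Deligne pairing decomposes as
\[
\pair{\oo(\mathbb{D}_1),\oo(\mathbb{D}_2)} \otimes \oo\!\left(\sum_{\mathfrak{c}\in\mathfrak{C}} i_\mathfrak{c}(R(\mathbb{D}_1)+g_1, R(\mathbb{D}_2)+g_2)\cdot \mathfrak{c}\right),
\]
provided the local combinatorial contributions $i_\mathfrak{c}$ exist. Since the essential skeleton $\Gamma$ decomposes into connected components indexed by the cusps, the Laplacian $\Delta$ and the integral $\int_\Gamma$ localize accordingly, so Proposition \ref{pairing_with_laplacian} applies cusp by cusp and each $i_\mathfrak{c}$ is finite. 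Therefore the Deligne pairing is well defined in $\Pic_\rr(C)$, and its degree equals $\mathbb{D}_1\cdot\mathbb{D}_2$ plus the total sum $\sum_\mathfrak{c} i_\mathfrak{c}(\cdots)$, which is exactly (\ref{product}).

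There is no real obstacle: the only non-routine step is confirming that, in the non-connected setting of $\Gamma$, Proposition \ref{pairing_with_laplacian} applies componentwise so that the coefficient of each cusp in the Deligne pairing is individually finite. This is immediate from the definitions but deserves a brief remark, since the references \cite{bf} and \cite{fab} used in the proof of Proposition \ref{pairing_with_laplacian} are stated for connected metrized graphs.
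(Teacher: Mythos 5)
Your proposal is correct and follows exactly the paper's own route: the paper derives Corollary \ref{productexists} immediately by combining Lemma \ref{inters_b_div} (to express the intersection product via the combinatorial limit) with Proposition \ref{pairing_with_laplacian} (to identify that limit with $-\int_\Gamma g_1\,\Delta\,g_2$), and the Deligne-pairing refinement via the cusp-by-cusp decomposition from the end of Section \ref{bpairing}. Your closing remark on the componentwise application to the non-connected $\Gamma$ is a sensible caution that the paper itself anticipates at the start of Section \ref{sec:comp}.
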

We observe that the natural intersection and Deligne pairing of compactified b-divisors and b-line bundles precisely match the intersection and Deligne pairing of compactified divisors and line bundles as proposed in the context of curves over complete non-archimedean fields by Zhang in \cite[Sections~2.1 and~2.2]{zh}.

Following \cite{zh} we  introduce the notion of \emph{curvature form} of a compactified b-divisor or b-line bundle, and use this notion to rewrite (\ref{product}), and to discuss positivity properties of b-divisors and b-line bundles. First of all, the curvature form on compactified divisors on $\Gamma_\qq$ is to be the linear map $c_1 \colon \overline{\Div}(\Gamma) \to \Zh(\Gamma)^*$  given by the prescription
\[c_1(D+g) = \delta_D - \Delta \,g \, . \] 
We call a compactified divisor $D+g \in \overline{\Div}(\Gamma)$ \emph{semipositive} if its curvature form $c_1(D+g) \in \Zh(\Gamma)^*$ is a positive distribution, i.e., takes non-negative values on non-negative test functions in $\Zh(\Gamma)$. 

We define the curvature form $c_1(\mathbb{D}+g)$ of a metrized $\qq$-Cartier divisor $\mathbb{D}+g$ with $g \in \Zh_\qq(\Gamma)$ to be the curvature form of the associated compactified divisor $R(\mathbb{D})+g$ on $\Gamma$, explicitly  
\[ c_1(\mathbb{D}+g)=\delta_{R(\mathbb{D})} -\Delta \, g \, . \] 
Let $\Gamma_0$ denote a connected component of $\Gamma$, and let $\deg \mathbb{D}$ denote the relative degree of the $\qq$-Cartier divisor $\mathbb{D}$ over $C$. Then  $c_1(\mathbb{D}+g)$ has total mass equal to $\deg \mathbb{D}$ on $\Gamma_0$, as $\Delta\, g$ has zero total mass over $\Gamma_0$. The intersection product in (\ref{product}) translates into  
\begin{equation} \label{with_curv_form}   \mathbb{D}_1 \cdot \mathbb{D}_2 + g_2(R(\mathbb{D}_1))  + \int_\Gamma g_1 \, c_1(\mathbb{D}_2+g_2) \, .
\end{equation}

We claim that the curvature form of metrized $\qq$-Cartier divisors $\mathbb{D}+g$ with $g \in \Zh_\qq(\Gamma)$ descends to give a curvature form of compactified b-divisors on $\I_0$. Recall from (\ref{exact-bis}) the map $\mathrm{PA}_\qq(\Gamma) \hookrightarrow \cDiv_\qq(\I_0) \oplus \Zh_\qq(\Gamma)$ given by sending $g \in \mathrm{PA}_\qq(\Gamma)$ to $-\gamma^{-1}(0,g)+g$. The following lemma then proves our claim, by (\ref{exact-bis}).
\begin{lem} \label{fund} For each element of $ \cDiv_\qq(\I_0) \oplus \Zh_\qq(\Gamma)$ lying in the image of $\mathrm{PA}_\qq(\Gamma) $, the curvature form vanishes.
\end{lem}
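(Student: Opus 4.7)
The strategy is to unpack definitions and verify the identity locally at each vertex of $\Gamma$, working over the cofinal subcategory $\I_0'$ of essential blow-ups $\xx$ whose weighted graph $G(\xx)$ is simple.

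First I would unwind the embedding $\mathrm{PA}_\qq(\Gamma) \hookrightarrow \cDiv_\qq(\I_0) \oplus \Zh_\qq(\Gamma)$. Given $g \in \mathrm{PA}_\qq(\Gamma)$, choose $\xx \in \I_0'$ so that $g$ is affine on every edge of $\Gamma(\xx)$ and corner-supported on $V(\xx)$. Then the element $\gamma^{-1}(0,g)$ is realized on $\xx$ as the vertical divisor
\[
\gamma_\xx^{-1}(0,g)=\sum_{y \in V(\xx)} g(y)\,v_\xx(y)\, y \in \Div_\qq(\xx),
\]
and thus its image under $R$ reads $R(\gamma^{-1}(0,g))=\sum_{x \in V(\xx)}\bigl(\sum_{y \in V(\xx)} g(y)\,v_\xx(x)v_\xx(y)(x\cdot y)\bigr)\,x$. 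By the definition of the curvature form, what we must show is that
\[
\delta_{R(-\gamma^{-1}(0,g))}=\Delta\, g
\]
as elements of $\Zh(\Gamma)^*$.

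Next, since $g$ is affine on the interior of each edge of $\Gamma(\xx)$, the continuous part $-g''(x)\,\mathrm{d}x$ of $\Delta g$ vanishes, and the singular part is supported on $V(\xx)$. It therefore suffices to verify, for each $x \in V(\xx)$, the identity
\[
\sum_{y \in V(\xx)} g(y)\,v_\xx(x)\,v_\xx(y)\,(x\cdot y)=\sum_{\vec v \in \Dir(x)} D_{\vec v}\,g(x).
\]
On the right-hand side, each direction $\vec v$ at $x$ corresponds (using the simplicity of $G(\xx)$) to a unique adjacent vertex $y$ joined to $x$ by a single edge of length $1/(v_\xx(x)v_\xx(y))$. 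Since $g$ is affine on that edge,
\[
D_{\vec v}\,g(x)=v_\xx(x)v_\xx(y)\bigl(g(y)-g(x)\bigr).
\]

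The decisive step is to handle the contribution of $g(x)$. Summing the previous expression over $\vec v$ gives
\[
\sum_{\vec v \in \Dir(x)} D_{\vec v}\,g(x)= v_\xx(x)\sum_{y\neq x} v_\xx(y)(x\cdot y)\,g(y)-v_\xx(x)\,g(x)\sum_{y\neq x} v_\xx(y)(x\cdot y).
\]
Let $F$ denote the fiber of $\xx \to C$ containing $x$, so $F=\sum_{z} v_\xx(z)\,z$. Because $F$ is a fiber, $x\cdot F=0$, which yields $\sum_{y \neq x} v_\xx(y)(x\cdot y)=-v_\xx(x)(x\cdot x)$. Substituting this back, the right-hand side becomes
\[
v_\xx(x)\sum_{y\neq x} v_\xx(y)(x\cdot y)\,g(y)+v_\xx(x)^2\,g(x)\,(x\cdot x),
\]
which coincides exactly with the left-hand side after separating the $y=x$ term. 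The identity is proved, and hence $c_1\bigl(-\gamma^{-1}(0,g)+g\bigr)=0$.

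The main obstacle is conceptual rather than computational: one must reconcile the normalization by multiplicities used in the definition of $\gamma_\xx$ with the edge lengths $1/(v_\xx(x)v_\xx(y))$ of the weighted dual graph, and it is precisely the fiber-orthogonality relation $x\cdot F=0$ that makes the self-intersection term in the discrete Laplacian match the coefficient coming from $R$.
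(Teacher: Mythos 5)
Your proof is correct and follows essentially the same route as the paper: both reduce the statement to the identity $\Delta g = \delta_{R(-\gamma^{-1}(0,g))}$ and compute $R(\gamma^{-1}(0,g))$ as $\sum_{x,y} g(y)v_\xx(x)v_\xx(y)(x\cdot y)\,x$. The only difference is that you explicitly verify the formula $\Delta g = -\sum_{x,y} g(y)v_\xx(x)v_\xx(y)(x\cdot y)\,\delta_x$ vertex by vertex via the relation $x\cdot F=0$, a step the paper's proof simply asserts; this is a correct and welcome filling-in of detail rather than a different argument.
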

\begin{proof} Each element $g \in \mathrm{PA}_\qq(\Gamma) $ can be represented as a map $g \in  \qq^{V(\xx)}$ for some model $\xx \in \I_0$. 
We have $ \gamma^{-1}(0,g) = \sum_{y \in V(\xx)} g(y)\, v_\xx(y) \, y \in \Div_\qq(\xx) $ and hence
\[ R(\gamma^{-1}(0,g)) = \sum_{x, y \in V(\xx)} g(y)\, v_\xx(x)\,v_\xx(y)\, (x \cdot y) \, x \in \Div_\qq(\Gamma) \, .  \]
On the other hand we have
\[ \Delta \, g = - \sum_{x, y \in V(\xx)} g(y)\,v_\xx(x)\, v_\xx(y)\,(x\cdot y)\, \delta_x  \in \Zh(\Gamma)^* \, . \]
We conclude that $  \Delta\, g = \delta_{R(-\gamma^{-1}(0,g))} $ in $\Zh(\Gamma)^*$, and the lemma follows.
\end{proof}
For example, for a $\qq$-Cartier divisor $\mathbb{D} \in \cDiv_\qq(\I_0)$ the curvature form is just $\delta_{R(\mathbb{D})}$. In particular, for a principal divisor $\mathbb{D}=\divisor(f)$ the curvature form vanishes. Compare with \cite[Section~2.5]{zh}. It follows that the curvature form on $\overline{\Div}(\I_0)$ descends further to give a natural map
\[ c_1 \colon \overline{\Pic}(\I_0) \to \Zh(\Gamma)^* \, . \]
We call a compactified b-divisor or b-line bundle \emph{semipositive} if its curvature form is a positive distribution. For $\mathbb{D} \in \cDiv_\qq(\I_0)$ one has that $\mathbb{D}$ is semipositive if and only if $\mathbb{D}$ is relatively nef, which shows that our notion of semipositivity for compactified b-divisors is reasonable. It would be interesting to compare our notions of curvature form and semipositivity with the notions of curvature form and semipositivity as developed in \cite[Sections~4 and~5]{bfj-sing}  in the context of pluripotential theory on Berkovich analytic spaces.

\section{Admissible b-divisors} \label{sec:admis}

We continue to work with the category $\I_0$ of essential blow-ups and its associated metrized graph $\Gamma$. An important role in the sequel is played by so-called admissible b-divisors and b-line  bundles. These are special compactified b-divisors and b-line bundles whose curvature form has a given shape. Following \cite{zh} we introduce a class of special distributions in $\Zh(\Gamma)^*$ called admissible measures. We can work component by component in $\Gamma$ and we will assume from now on, to ease the notation, that  $\Gamma$ is connected. Also we assume in this section that the minimal regular model $\xx_0$ of $S$ over $C$ is semistable.

Let $K_\can \in \Div_\qq(\Gamma_\qq)$ denote the canonical divisor of $\Gamma$, i.e.\ the divisor given by $K_\can(x) = v(x)-2$ for each $x \in \Gamma_\qq$ where $v(x)$ denotes the degree of $x$. Note that $K_\can$ is well-defined since $v(x) \neq 2$ for only finitely many points $x \in \Gamma_\qq$. Assume that a vertex set $V \subset \Gamma_\qq$ is chosen such that $V$ contains all points of $\Gamma_\qq$ of degree $\neq 2$. The complement of $V$ in $\Gamma$ can now be written as a finite union of open line segments, that we call the edge set $E$ associated to $V$. Each $e \in E$ has a length or  weight $\ell(e) \in \qq_{>0}$. The canonical measure on $\Gamma$ is by definition the distribution
\[   \mu_\can = -\frac{1}{2}\delta_{K_{\can}} + \sum_{e \in E} \frac{ \d\, x}{\ell(e) + r(e)}  \]
in $\Zh(\Gamma)^*$, where $r(e)$ denotes the effective resistance between the endpoints of $e$ in the graph obtained by removing the edge $e$ from $\Gamma$. Here $r(e)$ is set to be $\infty$ if upon removal of $e$ the graph $\Gamma$ becomes disconnected. The measure $\mu_\can$ is independent of the choice of $V$, and has total mass equal to one, by \cite[Theorem~2.11]{cr}. The canonical measure arises naturally in the study of the variational properties of the effective resistance on $\Gamma$, by \cite{cr}. 

In order to define the admissible measures we fix a rational number $h \neq 0$ and pick a $K \in \Div_\qq(\Gamma_\qq)$ of degree $2h-2$. Zhang's admissible measure with respect to $K$ is then the element
\[ \mu = \mu_K= \frac{1}{2h}\left( \delta_K + 2 \,\mu_\can \right) \] 
of $\Zh(\Gamma)^*$. Note that $\mu$ again has total mass equal to one. Let $x \in \Gamma_\qq$ be a point. Then Zhang's Arakelov-Green's function relative to $K$ is the element of $\Zh(\Gamma)$ uniquely determined by the two conditions
\begin{equation} \label{def_green} \Delta_y \, g_\mu(x,y) =\delta_x(y) - \mu(y) \, , \quad \int_\Gamma g_\mu(x,y) \, \mu(y) = 0 \, . 
\end{equation}

We refer to \cite[Section~3 and Appendix]{zh} for a proof of existence in $\Zh(\Gamma)$ of the function $g_\mu(x,y)$ for all $x \in \Gamma$. The uniqueness is clear as the kernel of $\Delta$ consists of only the constant functions on $\Gamma$. The proof in \cite{zh} actually shows that each $g_\mu(x,y)$ is a piecewise quadratic function on $\Gamma$, and it is not difficult to see that $g_\mu$ is $\qq$-valued on $\Gamma_\qq \times \Gamma_\qq$ as on each segment $e \in E$ the function $g_\mu(x,y)$ for $x \in \Gamma_\qq$ fixed can be expressed as a quadratic polynomial with rational coefficients. In particular we have  for $x \in \Gamma_\qq$ fixed that $g_\mu(x,y) \in \Zh_\qq(\Gamma)$. By \cite[Theorem~3.2]{zh} there exists $c=c(\Gamma,K) \in \qq$ such that for all $y \in \Gamma$ the identity
\begin{equation} \label{constant}
c + g_\mu(y,y) + g_\mu(K,y) = 0  
\end{equation}
holds. We refer to \cite{ci} \cite{zh} for more details.

We say that a compactified b-divisor determined by a metrized $\qq$-Cartier divisor $\mathbb{D}+g \in \cDiv_\qq(\I_0) \oplus \Zh_\qq(\Gamma)$ is \emph{admissible} if its curvature form $c_1(\mathbb{D}+g)$ is a multiple of $\mu$. Comparing volumes we see that this multiple is then necessarily equal to $\deg \mathbb{D}$, the relative degree of $\mathbb{D}$ over $C$. Likewise we say that a compactified b-line bundle $\oo(\mathbb{D})\otimes \oo(g)$ is \emph{admissible} if its curvature form is a multiple of $\mu$. We denote the group of admissible b-divisors by $\overline{\Div}_a(\I_0)$, and the group of admissible b-line bundles by $\overline{\Pic}_a(\I_0)$.

Let $\omega \in \cPic_\qq(\I_0)$ denote the line bundle on $\I_0$ determined by the relative dualizing sheaf of the minimal regular model $\xx_0$ of $S$ over $C$. We take $K=R(\omega) \in \Div_\qq(\Gamma_\qq)$, and we will fix this choice from now on. The rational number $h=\frac{1}{2}(\deg K +2)$ now equals the genus of the fibers of $\pi \colon S \to C^0$, and is in particular non-zero. Moreover it is not difficult to check that the associated admissible measure  $\mu$ is now a positive distribution in $\Zh(\Gamma)^*$. 

Following again \cite{zh} we have the following special admissible b-line bundles on $\I_0$. First of all we put
\[  \omega_a = \omega \otimes \oo( c + g_\mu(K,y)) = \omega \otimes \oo(-g_\mu(y,y)) \, , \]
where $g_\mu$ is Zhang's Arakelov-Green's function and where $c$ is the constant defined in (\ref{constant}).  Next, let $P \colon C^0 \to S$ be a section of the projection $\pi \colon S \to C^0$. Let $\oo(\bar{P}) \in \Pic_\qq(\I_0)$  denote the line bundle on $\I_0$ determined by taking the Zariski closure in any model of the section $P$, and let $x=R(\oo(\bar{P}))$ be the associated one-point divisor on $\Gamma_\qq$. Thus for any model $\xx$ the vertex $x \in V(\xx) \subset \Gamma_\qq$ is the unique irreducible component to which $\bar{P}$ specializes. Recall that we are still assuming, for ease of exposition, that the graph $\Gamma$ is connected, i.e.\ that $C^0$ has precisely one cusp in $C$. We put 
\[  \oo(P)_a = \oo(\bar{P}) \otimes \oo( g_\mu(x,y) ) \, . \]
One verifies easily that $c_1( \oo(P)_a) = \mu$ and $c_1(\omega_a) = \deg(\omega)\,\mu$, so that both $\oo(P)_a$ and $\omega_a$ are semipositive admissible b-line bundles. When $P, Q \colon C^0 \to S$ are sections of the projection $\pi \colon S \to C^0$ we obtain from (\ref{with_curv_form}) the equality
\begin{equation} \label{inters_adm}  \oo(P)_a \cdot \oo(Q)_a = \oo(\bar{P})  \cdot \oo(\bar{Q}) + g_\mu(x, y) 
\end{equation}
in $\qq$ if $\bar{P}$ specializes to $x \in \Gamma_\qq$ and $\bar{Q}$ specializes to $y \in \Gamma_\qq$.

\section{The Arakelov metric} \label{sec:metric}

The purpose of this section is to introduce the canonical Arakelov metric in the complex analytic setting, and to discuss its main properties. For more details and background we refer to \cite{ar}, \cite{fa} and \cite{djfaltings}. We start by fixing a complete connected complex algebraic curve $M$ of positive genus $g \geq 1$. Let $\omega$ denote the line bundle of holomorphic differentials on $M$. Then the space $\omega(M)$ of global sections of $\omega$ has a natural hermitian inner product given by the assignment
\[ (\alpha,\beta) \mapsto \frac{i}{2} \int_M \alpha \wedge \bar{\beta} \, . \]
Let $(\alpha_1,\ldots,\alpha_g)$ denote an orthonormal basis of $\omega(M)$. The canonical volume form of $M$ is then defined to be the $(1,1)$-form
\[ \mu = \frac{i}{2g} \sum_{j=1}^g \alpha_j \wedge \bar{\alpha}_j \]
on $M$. One easily verifies that $\mu$ has total mass equal to one, is independent of the choice of orthonormal basis, and is indeed nowhere vanishing.  

The Arakelov-Green's function on $M$ is the generalized function $g_\Ar$ on $M \times M$ determined by the conditions
\[  \deldelbar_z g_\Ar(P,z) = \pi i\left( \mu(z) - \delta_P(z) \right) \, ,\quad \int_M g_\Ar(P,z) \mu(z) = 0  \]
for all $P \in M$. Note the similarity with (\ref{def_green}). For a proof of existence of $g_\Ar$ we refer to \cite{ar}; uniqueness is clear by compactness of $M \times M$. An application of Stokes's theorem yields the symmetry relation $g_\Ar(P,Q)=g_\Ar(Q,P)$ for all $P, Q \in M$, and \cite{ar} shows that for all $P \in M$ the generalized function  $g_\Ar(P,z)$ is smooth on $M \setminus \{P\}$, and develops a logarithmic singularity at $P$. 

The Arakelov-Green's function $g_\Ar$ induces a smooth hermitian metric $\|\cdot\|$ on the line bundle $L=\oo(\Delta)$ on $M \times M$, where $\Delta$ denotes the diagonal, by setting $\log \|1\|(P,Q)=g_\Ar(P,Q)$. Here $1$ denotes the canonical global section of $L$. By restriction to horizontal or vertical slices of $M \times M$ we obtain natural smooth hermitian metrics on the line bundles $\oo(P)$ for all $P \in M$. We denote the line bundle $\oo(P)$ equipped with the canonical metric induced by $g_\Ar$ by $\opar$. We note that $c_1(\opar)=\mu$.

The Poincar\'e residue induces a canonical adjunction isomorphism of line bundles
\begin{equation} \label{adjunction}   \oo_{M \times M}(-\Delta)|_\Delta \isom \omega  
\end{equation}
on $M$. By this isomorphism one obtains a canonical smooth hermitian residual metric $\|\cdot\|_\Ar$ on $\omega$. We use the symbol $\omar$ to refer to the line bundle $\omega$ equipped with the metric $\|\cdot\|_\Ar$. It is proved in \cite[Section~4]{ar} that $c_1(\omar)=\deg(\omega)\mu$.

The Arakelov metrics vary smoothly in families. More precisely, let $Y$ denote a smooth connected algebraic variety, and let $\pi \colon S \to Y$ be a proper smooth morphism with connected fibers of dimension one and of genus $g \geq 1$. Let $\omega=\omega_{S/Y}$ denote the relative cotangent line bundle of $S$ over $Y$. Equipping $\omega$ fiberwise with the residual metric $\|\cdot \|_\Ar$ we obtain the structure of a smooth hermitian line bundle on $\omega$ over $S$, that we will denote again by $\omar$. Let $P \colon Y \to S$ be a section of the projection $\pi \colon S \to Y$. Then equipping $\oo(P)$ fiberwise with the Arakelov metric derived from $g_\Ar(P,\cdot)$ we obtain the structure of a smooth hermitian line bundle on $\oo(P)$ over $S$, that we will denote by $\opar$.

We refer to \cite[Sections 5--7]{djfaltings} for more background on these ``family'' Arakelov metrics. We will need the following facts in our proof of Theorem \ref{chernweil}. 
Let $J \to Y$ with zero section $e \colon Y \to J$ be the family of jacobians associated to the family of curves $S \to Y$. Let $\bb$ denote the Poincar\'e bundle on $J$, equipped with its canonical translation-invariant smooth hermitian metric $\|\cdot\|$, cf.\ for example \cite[Section~2.2]{bghdj_sing} or \cite[Section~3]{hain_biext} for constructions and properties. Let $P, Q \colon Y \to S$ denote two sections of $\pi \colon S \to Y$, and let  $\delta \colon Y \to J$ denote the induced Abel-Jacobi map sending a point $z \in Y$ to the class of the degree-zero divisor $P_z-Q_z$ in $J_z$. Then as a special case of (7.6) in \cite[Section~7]{djfaltings} we have a canonical isometry
\begin{equation}  \label{deligne_poincare} \delta^*\bb \isom \pair{ \oo(P-Q)_\mathrm{Ar}, \oo(P-Q)_\mathrm{Ar} }^{\otimes -1} 
\end{equation}
of smooth hermitian line bundles on $Y$. 

Let $Y \hookrightarrow X$ denote an open immersion of smooth algebraic varieties, such that the boundary $X \setminus Y$ of $Y$  in $X$ is a normal crossings divisor on $X$.
\begin{thm} \label{lear_exists} The smooth hermitian line bundle $\delta^*\bb $ is semipositive on $Y$, and has a Mumford-Lear extension over $X$. 
\end{thm}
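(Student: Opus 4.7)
The plan is to translate both assertions via the canonical isometry (\ref{deligne_poincare}), which identifies $\delta^*\bb$ on $Y$ with the inverse of the Deligne pairing $\pair{\oo(P-Q)_\Ar, \oo(P-Q)_\Ar}$. Semipositivity of $\delta^*\bb$ then becomes seminegativity of the first Chern form of this Deligne pairing on $Y$, and the existence of a Mumford-Lear extension of $\delta^*\bb$ over $X$ reduces to the analogous statement for the Deligne pairing.

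For semipositivity I would combine (\ref{deligne_poincare}) with the standard compatibility $c_1(\pair{\llbar,\llbar}) = \pi_*(c_1(\llbar) \wedge c_1(\llbar))$ of the Deligne pairing with Chern forms. The key observation is that the real $(1,1)$-form $\eta = c_1(\oo(P-Q)_\Ar)$ on $S$ restricts to zero on every fiber $M_y$ of $\pi$: by construction of the family Arakelov metric both $c_1(\opar)|_{M_y}$ and $c_1(\oqar)|_{M_y}$ equal the canonical Arakelov measure $\mu$ on $M_y$. In local coordinates $(t,z)$ on $S$ with $z$ a fiber coordinate, this fiberwise vanishing amounts to the vanishing of the $dz \wedge d\bar z$-component of $\eta$, and a direct calculation then yields
\[
\eta \wedge \eta \;=\; 2\,|h_{tz}|^{2}\, dt \wedge d\bar t \wedge dz \wedge d\bar z,
\]
where $h_{tz}$ is the mixed coefficient of $\eta$. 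Integration along the fiber of $\pi$ produces a seminegative $(1,1)$-form on $Y$, and therefore $c_1(\delta^*\bb) = -\pi_*(\eta \wedge \eta)$ is semipositive on $Y$.

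For the existence of a Mumford-Lear extension I would work locally near a point $p \in X \setminus Y$. After possibly performing a semistable reduction over a neighborhood of $p$, one obtains a semistable model $\bar S \to U$ over a polydisc $U \subset X$ centered at $p$ to which both sections extend, with $\bar S \setminus S$ a normal crossings divisor. Using (\ref{deligne_poincare}) together with the explicit formula for the Deligne pairing norm, the asymptotic behaviour of the metric on $\delta^*\bb$ near $p$ is reduced to the asymptotic expansion of the family Arakelov-Green's function $g_\Ar(P(y),Q(y))$ and of the residual Arakelov metric on $\omega_{S/Y}$ near a semistable degeneration. The expansion established in \cite{djfaltings} and its predecessors then shows that the resulting metric on $\delta^*\bb$ has logarithmic growth on $U$ away from the codimension-two closed set where the closures $\bar P$ or $\bar Q$ meet the singular locus of the boundary fibers of $\bar S \to U$. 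By Definitions \ref{loggrowth} and \ref{MLextension} this furnishes the Mumford-Lear extension $[\delta^*\bb, X]$ in $\Pic_\qq(X)$.

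The principal obstacle is this last step. Off the codimension-two locus of section-node incidences the desired logarithmic growth follows from the standard asymptotic expansion of the Arakelov-Green's function at a node; along that locus, however, the metric may develop exactly the ``height jump'' discussed in the Introduction and fail to be of logarithmic type. It is precisely the flexibility of Definition \ref{MLextension}, which tolerates a codimension-two exceptional locus and then invokes Hartogs's theorem to extend the resulting Mumford extension across it, that allows one to circumvent this genuine obstruction and still obtain a Mumford-Lear extension of $\delta^*\bb$ over all of $X$.
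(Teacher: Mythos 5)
Your argument is essentially correct, but it takes a genuinely different route from the paper. The paper's proof is a two-line appeal to asymptotic Hodge theory: $\delta \colon Y \to J$ is an admissible normal function of the family of Jacobians, so semipositivity of $\delta^*\bb$ follows from \cite[Theorem~13.1]{hain_normal} or \cite[Theorem~8.2]{pp}, and the existence of the Mumford--Lear extension from \cite[Theorem~6.6]{lear}, \cite[Theorem~6.1]{hain_normal} or \cite[Theorem~5.19]{pearldiff}. You instead stay inside the Arakelov-theoretic framework of the paper: for semipositivity you combine (\ref{deligne_poincare}) with $c_1(\pair{\llbar,\llbar})=\pi_*\bigl(c_1(\llbar)\wedge c_1(\llbar)\bigr)$ and the observation that $\eta=c_1(\oo(P-Q)_\Ar)$ is fiberwise null, since both $c_1(\opar)$ and $c_1(\oqar)$ restrict to the canonical measure $\mu$ on each fiber; your pointwise identity $\eta\wedge\eta=2|h_{t\bar z}|^2\,dt\wedge d\bar t\wedge dz\wedge d\bar z$ is correct, and since $dt\wedge d\bar t\wedge dz\wedge d\bar z$ is a negative multiple of the volume form this $(2,2)$-form is seminegative, giving $c_1(\delta^*\bb)=-\pi_*(\eta\wedge\eta)\geq 0$. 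For the extension statement you reduce, again via (\ref{deligne_poincare}) and bilinearity of the Deligne pairing, to the Mumford--Lear extensions of $\pair{\opar,\opar}$, $\pair{\opar,\oqar}$ and $\pair{\oqar,\oqar}$, i.e.\ in effect to Theorem \ref{leararakelov} and hence to \cite[Theorem~1.3]{djfaltings}. What your route buys is self-containedness within the paper's toolkit and an explicit curvature formula; what the paper's route buys is generality and economy: the Hodge-theoretic theorems apply directly to admissible normal functions over a base of arbitrary dimension with normal crossings boundary and carry no semistability hypothesis, whereas Theorem \ref{lear_exists} itself assumes no semistable model, so your step ``after possibly performing a semistable reduction'' requires a ramified base change near the boundary followed by descent of the resulting $\qq$-line bundle, which is a real (if standard) piece of work when $\dim Y\geq 2$. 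One further caveat: since both (\ref{deligne_poincare}) and Theorem \ref{leararakelov} are imported from \cite{djfaltings}, you should verify that \cite[Theorem~1.3]{djfaltings} is not itself proved by invoking the Lear extension of the biextension metric; within the logical structure of the present paper your derivation is sound, but at the level of the literature it could otherwise be circular.
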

\begin{proof} We observe that $\delta \colon Y \to J$ is an admissible normal function of the family of (intermediate) jacobians $J \to Y$ in the sense of asymptotic Hodge theory. The first statement then follows from \cite[Theorem~13.1]{hain_normal} or \cite[Theorem~8.2]{pp}. The second statement follows from \cite[Theorem~6.6]{lear}, \cite[Theorem~6.1]{hain_normal} or \cite[Theorem~5.19]{pearldiff}. 
\end{proof}

\section{Proof of Theorem \ref{integrability}} \label{sec:proof_integr}

Actually we prove a more general version of Theorem \ref{integrability}, see Theorem \ref{admissibility} below.

Let $S$ be a smooth connected complex algebraic variety and let $T \to S$ be a smooth proper curve of positive genus. Let $P, Q$ be sections of $T \to S$. Let $S\hookrightarrow \xx$ be an open immersion into a smooth complex variety $\xx$ with boundary a normal crossings divisor $B$.  Denote by $B^\sing$ the set of singular points of $B$.
\begin{thm}  \label{leararakelov}
Assume that $T \to S$ extends into a semistable curve $\mathcal{T} \to \xx$. Then for each of the smooth hermitian line bundles $\pair{ \opar, \omar }$, $\pair{\opar,\oqar}$ and $\pair{\omar,\omar}$ on $S$ it holds that some positive tensor power has a continuous extension over $\xx \setminus B^\sing$. In particular each of $\pair{ \opar, \omar }$, $\pair{\opar,\oqar}$ and $\pair{\omar,\omar}$ has a Mumford-Lear extension over $\xx$.
\end{thm}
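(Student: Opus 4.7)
The plan is to reduce the three Deligne pairings to pullbacks of $\omar$ and $\oqar$ along sections, and then carry out a direct analytic study of those pullbacks near the simple boundary $B \setminus B^\sing$. The fundamental reduction is the adjunction formula for the Deligne pairing: for any section $R\colon S\to T$ and any smooth hermitian line bundle $\bar L$ on $T$ one has a canonical isometry $\pair{\oo(R)_\Ar,\bar L}\isom R^*\bar L$. Combined with the Poincar\'e residue identification $\opar|_P\isom \omar^{\otimes-1}$, this yields
\begin{equation*}
\pair{\opar,\omar}\isom P^*\omar,\qquad \pair{\opar,\oqar}\isom P^*\oqar,\qquad \pair{\opar,\opar}\isom (P^*\omar)^{\otimes-1}.
\end{equation*}

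The technical heart of the argument is then to show that for any section $R$ of $T\to S$ the pullback $R^*\omar$, and similarly $P^*\oqar$ for distinct sections $P,Q$, admits a continuous extension across $B\setminus B^\sing$ after passing to a positive tensor power. By properness, $R$ extends to a map $\bar R\colon\xx\to\tt$. At a point of $B\setminus B^\sing$ the special fiber of $\tt$ is a (generally reducible) semistable curve, and I would carry out a local model computation using the standard description of the fibrewise Arakelov-Green's function through a single-node degeneration. The outcome is that $\bar R^*\|\cdot\|_\Ar$ differs from a continuous metric by a finite product of factors $|z_i|^{\alpha_i}$, where $z_i$ are local equations for the boundary components meeting at the point and $\alpha_i\in\qq$. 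A common positive tensor power then absorbs these twists and yields the required continuous extension of an appropriate rational twist of $R^*\omar$. The analogous computation for $P^*\oqar$ uses $g_\Ar(\bar P,\bar Q)$ directly in place of the residual metric.

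As an independent cross-check that ties everything to the available general machinery, biadditivity of the Deligne pairing combined with the biextension identity (\ref{deligne_poincare}) gives
\begin{equation*}
(P^*\omar)^{\otimes-1}\otimes(P^*\oqar)^{\otimes-2}\otimes(Q^*\omar)^{\otimes-1}\isom\pair{\oo(P-Q)_\Ar,\oo(P-Q)_\Ar}\isom\delta^*\bb^{\otimes-1},
\end{equation*}
and Theorem \ref{lear_exists} shows that the right-hand side has a Mumford-Lear extension on $\xx$. Hence once the continuous extension claim for $P^*\omar$ and $Q^*\omar$ has been established, a Mumford-Lear extension for $P^*\oqar$ follows purely formally, although the stronger continuous statement still requires the direct degeneration computation.

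For the remaining case $\pair{\omar,\omar}$ I would appeal to the Noether-Faltings formula, which writes this Deligne pairing as a tensor combination of the Hodge bundle $\lambda$ equipped with the Faltings metric and a correction given by the Faltings $\delta$-invariant. The Hodge bundle with Faltings metric is good in Mumford's sense on any semistable compactification (being pulled back from the tautological bundle on $\overline{\mm}_g$), while the $\delta$-invariant contributes a boundary term with the same rational-logarithmic structure along $B\setminus B^\sing$ analyzed in the second paragraph. Once continuous extensions of all three pairings over $\xx\setminus B^\sing$ are in hand, Hartogs's theorem extends the underlying rationally twisted line bundles across the codimension-two set $B^\sing$, yielding the claimed Mumford-Lear extensions. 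The main obstacle is the local degeneration analysis for the fibrewise Arakelov-Green's function near a node: this is precisely where the semistability hypothesis is essential, and where the rationality of the exponents $\alpha_i$ must be verified.
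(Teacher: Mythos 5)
The paper's entire proof of this theorem is a citation of \cite[Theorem~1.3]{djfaltings}, and the substance of your proposal is essentially a sketch of how that cited theorem is organized rather than a proof. Your reductions are sound: the isometries $\pair{\opar,\omar}\isom P^*\omar$ and $\pair{\opar,\oqar}\isom P^*\oqar$ do hold, though not for the reason you state --- the adjunction $\pair{\oo(R)_\Ar,\bar L}\isom R^*\bar L$ is \emph{not} an isometry for arbitrary $\bar L$, because the Deligne metric carries the extra term $\int_{T/S}\log\|1_R\|_{\Ar}\,c_1(\bar L)$; it vanishes here only because $c_1(\omar)$ and $c_1(\oqar)$ are fiberwise proportional to the Arakelov volume form and $g_\Ar(R,\cdot)$ has zero $\mu$-mean. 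The biextension cross-check via (\ref{deligne_poincare}) and Theorem \ref{lear_exists} is also correct and is in fact how the paper itself handles $\pair{\opar,\oqar}$ later, in the semipositivity lemma of Section \ref{sec:proofcw}.

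The genuine gap is that the step you defer to ``a local model computation using the standard description of the fibrewise Arakelov-Green's function through a single-node degeneration'' is the entire theorem. What must be shown is that, along a semistable degeneration, $g_{\Ar}(P_t,Q_t)$ and the residual Arakelov metric on $\omega$ equal a \emph{rational} multiple of $-\log|t|$ (given by Zhang's Green's function on the reduction graph) plus a remainder that extends \emph{continuously} up to the boundary, uniformly in the remaining base directions near $B\setminus B^{\sing}$ --- and likewise for the Faltings $\delta$-invariant in your Noether--Faltings treatment of $\pair{\omar,\omar}$. Continuity of the remainder (not merely boundedness) and rationality of the exponents are exactly the content of \cite[Theorem~1.3]{djfaltings}; they do not follow from any ``standard description'' and are nontrivial even for a one-parameter family with a single node. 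Your outline correctly identifies this as the main obstacle, but leaving it unexecuted means the proof of the theorem is missing rather than merely routed differently. If you intend a self-contained argument you must either carry out these asymptotics or cite them; if you cite them, your first two paragraphs collapse to the paper's one-line proof.
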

\begin{proof} This follows from \cite[Theorem~1.3]{djfaltings}.
\end{proof}
Let $Y$ be a smooth complex variety and let $Y \hookrightarrow Z$ be an open immersion into a smooth complex variety $Z$ with boundary a normal crossings divisor $D$. Let $\pi \colon S \to Y$ be a smooth proper curve of positive genus. Let $P \colon Y \to S$ be a section of $S \to Y$. Let $\llbar$ be either the smooth hermitian line bundle $\opar$ or $\omar$ on $S$. Let $\mmbar$ be a smooth hermitian line bundle on $Y$ with logarithmic growth on $Z$. 

 \begin{thm} \label{admissibility} Let $\xx \to Z$ be a regular \emph{nc}-model of $\pi \colon S \to Y$ and denote by $B$ the reduced boundary divisor of $S$ in $\xx$. Assume that $\pi$ extends into a semistable curve $\xx_0 \to Z$.
Then some positive tensor power of the smooth hermitian line bundle $\llbar \otimes \pi^*\mmbar$ has logarithmic growth on $\xx \setminus B^\sing$. In particular $\llbar \otimes \pi^*\mmbar$ has a Mumford-Lear extension over $\xx$. 
\end{thm}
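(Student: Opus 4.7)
The plan is to use the tensor decomposition $\llbar \otimes \pi^*\mmbar$ and handle the two factors separately, since Mumford's logarithmic-growth condition is stable under tensor products of smooth hermitian line bundles.

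For the pullback factor $\pi^*\mmbar$, the strategy is pullback functoriality of the log-growth condition under morphisms of normal-crossings pairs. Given $p \in \xx$ with $q := \pi(p) \in D$, I would choose local coordinates $z_1, \ldots, z_m$ on $Z$ around $q$ with $D = \{z_1 \cdots z_k = 0\}$, and coordinates $w_1, \ldots, w_n$ on $\xx$ around $p$ with $B = \{w_1 \cdots w_\ell = 0\}$. Since $B = \pi^{-1}(D)$ set-theoretically, each $z_i \circ \pi$ is a monomial in $w_1, \ldots, w_\ell$ times a non-vanishing holomorphic unit, so $-\log|z_i \circ \pi|$ is a non-negative integer combination of the $-\log|w_j|$ up to a bounded error. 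Substituting these relations into the estimate \eqref{estimate} for a frame of the Mumford extension of $\mmbar$ yields an estimate of exactly the same shape in the $w$-coordinates, with modified constants and enlarged exponent. Consequently $\pi^*\mmbar$ has logarithmic growth on all of $\xx$, and the theorem reduces to showing that some positive tensor power of $\llbar$ itself has logarithmic growth on $\xx \setminus B^\sing$.

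For this residual statement, the plan is to invoke the same local analytic input used in the proof of Theorem~\ref{leararakelov}, namely \cite[Theorem~1.3]{djfaltings}. Working at a smooth boundary point $p \in B \setminus B^\sing$, so that $B$ has a single local irreducible component $V$ through $p$ with some multiplicity $a \in \zz_{>0}$ in its fiber over $Z$, the semistable model $\xx_0 \to Z$ allows one to compare the local structure of $\xx$ near $p$ with a semistable degeneration over a disk around $\pi(p) \in Z$. Faltings' explicit asymptotic expansion of the family Arakelov Green's function near a semistable node \cite{fa}, combined with this comparison, yields a two-sided estimate of the form $C_1 (-\log|w|)^{-N} \leq \|\sigma\| \leq C_2 (-\log|w|)^N$ for a local generating section $\sigma$ of a sufficiently high tensor power of the Mumford extension of $\opar$, respectively $\omar$, with $w$ a local equation of $V$. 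This is the log-growth condition of Definition~\ref{loggrowth}; together with $\codim_\xx B^\sing \geq 2$ and Hartogs, it produces the claimed Mumford-Lear extension over all of $\xx$.

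The main obstacle lies in this last step: transferring Faltings' asymptotics from $\xx_0$, where they are most naturally formulated, to the possibly non-semistable $\xx$. A common regular \emph{nc}-refinement of $\xx$ and $\xx_0$ will typically require blow-ups with centers at smooth points of $B$, at which the map down to $\xx$ is not an isomorphism, so one cannot simply pull a log-growth estimate back from the refinement to $\xx$. The resolution is to perform the local analysis directly in analytic coordinates on $\xx$ and to absorb the multiplicity $a$ of $V$ into the exponent $N$ of the log-growth bound; this is the technical heart of \cite[Theorem~1.3]{djfaltings}.
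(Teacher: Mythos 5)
Your treatment of the factor $\pi^*\mmbar$ is fine and matches the paper, which disposes of it in one line by functoriality of logarithmic growth under morphisms of \emph{nc}-pairs. The gap is in the main step, the logarithmic growth of (a power of) $\llbar$ itself on $\xx \setminus B^\sing$. You correctly sense that the difficulty is that $\xx \to Z$ need not be semistable while the asymptotic input is only available in the semistable setting, but your proposed resolution --- do the local analysis at a smooth boundary point of $\xx$, invoke ``Faltings' explicit asymptotic expansion of the family Arakelov Green's function near a semistable node'' from \cite{fa}, and ``absorb the multiplicity into the exponent $N$, this being the technical heart of \cite[Theorem~1.3]{djfaltings}'' --- is an assertion rather than an argument, and it miscasts what the cited theorem actually provides. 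As recorded in Theorem~\ref{leararakelov}, \cite[Theorem~1.3]{djfaltings} is a statement about continuous extensions of the \emph{Deligne pairings} $\pair{\opar,\omar}$, $\pair{\opar,\oqar}$, $\pair{\omar,\omar}$ on the \emph{base} of a family of curves, and its hypothesis is precisely that the family extends to a \emph{semistable} curve over the compactified base; it does not directly yield two-sided estimates for the metrics on $\omar$ or $\opar$ on the total space of an arbitrary regular \emph{nc}-model, which is what you need. Your plan therefore defers the hard point to a citation whose hypotheses you have not arranged to hold.

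The paper closes exactly this gap with a change of perspective that your proposal is missing. Set $T = S \times_Y S$ with projections $p_1, p_2$ and diagonal $\Delta$; then one has canonical isometries $\omar \isom \pair{\oo(\Delta)_{\Ar}, p_2^*\omar}$ and $\opar \isom \pair{\oo(\Delta)_{\Ar}, p_2^*\oo(P)_{\Ar}}$, the Deligne pairings being taken along $p_1$. This exhibits $\llbar$ as a Deligne pairing on the \emph{base} $S$ of the family $p_1 \colon T \to S$. Now take $\mathcal{T} = \xx \times_Z \xx_0 \to \xx$: this extends $p_1$ and is semistable over $\xx$, because semistability of $\xx_0 \to Z$ is preserved under the base change along $\xx \to Z$ --- no semistability of $\xx$ itself is ever needed. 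Theorem~\ref{leararakelov} then applies verbatim with $\xx$ in the role of the base and gives the continuous (hence logarithmic-growth) extension of a power of $\llbar$ over $\xx \setminus B^\sing$, after which Hartogs produces the Mumford--Lear extension as you describe. Without this Deligne-pairing reduction, the comparison between $\xx$ and $\xx_0$ that you flag as the ``main obstacle'' remains genuinely open in your write-up; note also that for higher-dimensional $Z$ (which the theorem allows) your one-disk transversal analysis would in any case not yield the uniform two-sided estimates required by Definition~\ref{loggrowth}.
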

\begin{proof} Put $T = S \times_Y S$, and let $p_1, p_2 \colon T \to S$ denote the projections on the first and second coordinate, respectively. Let $\Delta \colon S \to T$ denote the diagonal map. We then have canonical isometries $ \omar \isom \pair{\oo(\Delta)_{\mathrm{Ar}},p_2^*\omar} $ and $\opar \isom \pair{\oo(\Delta)_{\mathrm{Ar}}, p_2^*\oo(P)_{\mathrm{Ar}}} $
of smooth hermitian line bundles on $S$, where both Deligne pairings are taken along $p_1$ and where we consider $\Delta$ as a section of $p_1$. We apply Theorem \ref{leararakelov} with $\mathcal{T} = \xx \times_Z \xx_0$. Note that the projection $\mathcal{T} \to \xx$ is semistable and extends the map $p_1 \colon T \to S$. We deduce  that some positive tensor power of both $\omar$ and $\opar$ has a continuous extension over $\xx \setminus B^\sing$. By functoriality $\pi^*\mmbar$ has logarithmic growth on $\xx$, and the theorem follows.
\end{proof}
Applying Proposition \ref{create-b-line} we obtain the following corollary.
\begin{cor} \label{bmlexists} Let $\I$ be any cofiltered category of regular \emph{nc}-models of $S \to Y$. Then $ \llbar \otimes \pi^*\mmbar$ admits all Mumford-Lear extensions in $\I$. In particular, the b-Mumford-Lear extension $\left[ \llbar \otimes \pi^*\mmbar, \I \right]$ of $\llbar \otimes \pi^*\mmbar$ exists in $\bPic_\qq(\I)$.
\end{cor}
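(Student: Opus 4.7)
The plan is essentially to combine Theorem~\ref{admissibility}, applied model by model, with Proposition~\ref{create-b-line}. There is no real obstacle here; the statement is a direct harvest of the machinery developed in Sections~\ref{sec:ml} and~\ref{sec:proof_integr}, and the author's proof should be little more than a line or two of citation.

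First, I would fix an arbitrary object $\xx \in \I$. Since by assumption $\I$ consists of regular \emph{nc}-models of $\pi \colon S \to Y$, the hypotheses of Theorem~\ref{admissibility} are satisfied for $\xx$: the semistability assumption concerns the existence of a semistable extension $\xx_0 \to Z$ of $\pi$, which is a condition on $\pi$ itself and is independent of the choice of $\xx \in \I$, so it is inherited from the ambient setup. Theorem~\ref{admissibility} then produces a Mumford-Lear extension $[\llbar \otimes \pi^*\mmbar, \xx] \in \Pic_\qq(\xx)$ in the sense of Definition~\ref{MLextension}; indeed, some positive tensor power of $\llbar \otimes \pi^*\mmbar$ has logarithmic growth on $\xx \setminus B^\sing$, and $B^\sing$ has codimension at least two in $\xx$, so Hartogs's theorem delivers the desired extension as a $\qq$-line bundle on all of $\xx$.

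Since $\xx \in \I$ was arbitrary, the smooth hermitian line bundle $\llbar \otimes \pi^*\mmbar$ admits a Mumford-Lear extension on every object of $\I$, which is precisely the hypothesis required by Proposition~\ref{create-b-line}. Applying that proposition, we conclude that the tuple $\bigl([\llbar \otimes \pi^*\mmbar, \xx]\bigr)_{\xx \in \I}$ is compatible under the pushforward transition maps $\varphi_* \colon \Pic_\qq(\xx') \to \Pic_\qq(\xx)$ associated to morphisms $\varphi \colon \xx' \to \xx$ in $\I$, and therefore defines an element of $\bPic_\qq(\I) = \varprojlim_{\xx \in \I} \Pic_\qq(\xx)$. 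By construction this element is the b-Mumford-Lear extension $[\llbar \otimes \pi^*\mmbar, \I]$, and this completes the proof.
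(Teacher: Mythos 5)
Your proposal is correct and follows exactly the paper's route: the paper derives this corollary by applying Theorem~\ref{admissibility} on each model and then invoking Proposition~\ref{create-b-line}, precisely as you do. Your version merely spells out the one-line citation in more detail, including the (correct) observation that the semistability hypothesis is a condition on $\pi$ alone and so holds uniformly over all $\xx \in \I$.
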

Note that Corollary \ref{bmlexists} has Theorem \ref{integrability} as a special case.

\section{Reduction to the essential category} \label{sec:reduction}

Let again $C^0$ be a smooth connected complex algebraic curve, and let $C$ denote its smooth completion. Let $\pi \colon S \to C^0$ be a smooth proper curve over $C^0$ of positive genus. Let $\I_0$ be the category of essential blow-ups determined by $\pi \colon S \to C^0$. Recall that the objects of $\I_0$ are precisely the regular \emph{nc}-models of $S$ over $C$ where the unique map to the minimal regular model $\xx_0$ is a composition of point blow-ups $\varphi \colon \xx'' \to \xx'$ where each $\varphi$ is the blow-up of a singular point of the boundary divisor $B(\xx')$ of $S$ in $\xx'$. The purpose of  this section is to show that in order to prove Theorem \ref{chernweil}, we may as well restrict to the category $\I_0$ and compute the intersection product on the left hand side of (\ref{inters_is_integral}) on $\I_0$. 

As in Theorem \ref{chernweil}, let $P, Q \colon C^0 \to S$ be sections of the map $\pi \colon S \to C^0$. Let $\llbar_1$, $\llbar_2$ be any of $\opar$, $\oqar$ or $\omar$ on $S$, and let $\mmbar_1, \mmbar_2$ be smooth hermitian line bundles on $C^0$ that are good on $C$ in the sense of Mumford. We put $\bar{L}_1=\llbar_1 \otimes \pi^*\mmbar_1$ and $\bar{L}_2=\llbar_2 \otimes \pi^*\mmbar_2$. 

\begin{prop} \label{can_restrict} Let $\I$ be the category of all regular \emph{nc}-models of $S$ over $C$. The identity
\[ \left[ \bar{L}_1, \I  \right] \cdot \left[ \bar{L}_2, \I  \right] = \left[ \bar{L}_1, \I_0  \right] \cdot \left[ \bar{L}_2, \I_0  \right] \]
holds in $\rr$. More precisely, assume that the intersection product on one side exists in $\rr$. Then so does the intersection product on the other side, and both intersection products are equal. 
\end{prop}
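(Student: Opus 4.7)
The plan is to use a pair of observations to reduce the net limit over $\I$ to the net limit over $\I_0$. The first is a stability property, already implicit in the proof of Theorem \ref{admissibility}: the Mumford-Lear extensions of $\bar L_1$ and $\bar L_2$ are functorial under blow-ups centered outside the singular locus of the boundary divisor, so that the intersection number $D_{1,\xx}\cdot D_{2,\xx}$ is constant along compositions of such \emph{non-essential} blow-ups. The second is a combinatorial assignment sending an arbitrary $\xx\in\I$ to an essential model $\xx_e\in\I_0$ satisfying $D_{1,\xx}\cdot D_{2,\xx} = D_{1,\xx_e}\cdot D_{2,\xx_e}$; this identifies the value of the net on $\I$ with a function that factors through $\I_0$.

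For the first observation, let $\varphi\colon\xx'\to\xx$ be a blow-up in $\I$ with center $p$ lying in the smooth locus $B(\xx)\setminus B^\sing(\xx)$ of the boundary divisor. By Theorem \ref{admissibility}, each $\bar L_k$ has logarithmic growth on an open neighborhood of $p$ in $\xx$. Since $\varphi^{-1}(B^\sing(\xx))\subset B^\sing(\xx')$ (the center $p$ being outside $B^\sing(\xx)$), the functoriality of the Mumford extension recorded just after Definition \ref{loggrowth}, combined with the uniqueness of the Hartogs extension across the codimension-two bad locus $B^\sing(\xx')$, gives $[\bar L_k,\xx']=\varphi^*[\bar L_k,\xx]$ in $\Pic_\qq(\xx')$, hence $D_{k,\xx'}=\varphi^*D_{k,\xx}$ in $\Div_\qq(\xx')$ for $k=1,2$. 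The projection formula then yields
\[ D_{1,\xx'}\cdot D_{2,\xx'} \;=\; D_{1,\xx}\cdot D_{2,\xx}. \]
By induction on length, this invariance propagates through arbitrary towers $\tilde\xx\to\xx$ whose successive centers all avoid the evolving singular locus of the boundary.

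For the second observation, I would attach to every $\xx\in\I$ an essential model $\xx_e\in\I_0$ together with a common refinement $\tilde\xx\in\I$ dominating both $\xx$ and $\xx_e$, in such a way that the two maps $\tilde\xx\to\xx$ and $\tilde\xx\to\xx_e$ are compositions of non-essential blow-ups. The idea is to perform on top of $\xx$ all the essential blow-ups needed to mirror the history of $\xx\to\xx_0$ by a sequence of essential blow-ups starting from $\xx_0$, producing $\xx_e$ in parallel with $\tilde\xx$. Combined with the first observation this yields $D_{1,\xx}\cdot D_{2,\xx} = D_{1,\xx_e}\cdot D_{2,\xx_e}$, and the assignment $\xx\mapsto\xx_e$ is compatible with further dominance in the sense that if $\xx\in\I$ already dominates some $Z\in\I_0$, then $\xx_e$ dominates $Z$ in $\I_0$.

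With these two ingredients in hand, the equivalence of the two net limits and the equality of their values becomes a formal matter. Suppose for instance that $\lim_{Z\in\I_0}D_{1,Z}\cdot D_{2,Z}=L$ exists in $\rr$, and choose $Z_\varepsilon\in\I_0$ such that all $Z\in\I_0$ dominating $Z_\varepsilon$ satisfy $|D_{1,Z}\cdot D_{2,Z}-L|<\varepsilon$. For any $\xx\in\I$ dominating $Z_\varepsilon$, its essentialization $\xx_e$ dominates $Z_\varepsilon$ in $\I_0$, so $|D_{1,\xx}\cdot D_{2,\xx}-L|=|D_{1,\xx_e}\cdot D_{2,\xx_e}-L|<\varepsilon$; the converse implication follows directly from the inclusion $\I_0\subset\I$. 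The main obstacle is the combinatorial construction of the essential model $\xx_e$ and the common refinement $\tilde\xx$ attached to $\xx$: essential and non-essential blow-ups do not commute in general, since a non-essential blow-up at a smooth boundary point creates new singular points that are then themselves available for essential blow-up. The construction will therefore proceed by a careful induction on the length of the sequence of blow-ups composing $\xx\to\xx_0$, inserting each essential step as an essential blow-up downstairs in $\xx_0$ and accommodating the discrepancy by further non-essential blow-ups upstairs.
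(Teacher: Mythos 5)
Your first observation, together with its proof (functoriality of the Mumford--Lear extension across a blow-up whose center avoids $B^\sing$, followed by the projection formula), is precisely the content of the paper's own proof of this proposition; the paper compresses everything else into the single sentence that ``it suffices to verify'' this invariance. Your attempt to make that reduction explicit is therefore the right instinct, and you correctly locate the real combinatorial difficulty: essential and non-essential blow-ups do not commute, so the essentialization $\xx\mapsto\xx_e$ and the common refinement $\tilde\xx$ must be built by induction on the blow-up sequence. That construction is only sketched in your proposal, but since the paper does not spell it out either, the substantive content of your argument matches the paper's.

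There is, however, one claim that is wrong as stated: that the implication ``$\lim_{\I}$ exists $\Rightarrow$ $\lim_{\I_0}$ exists'' follows \emph{directly} from the inclusion $\I_0\subset\I$. Restricting a convergent net to a directed subcategory preserves convergence only when that subcategory is cofinal, and $\I_0$ is \emph{not} cofinal in $\I$: no model in $\I_0$ dominates, say, the blow-up of $\xx_0$ at a smooth point of the boundary divisor, because every morphism $Z\to\xx_0$ with $Z\in\I_0$ has zero-dimensional fibers over the smooth locus of $B(\xx_0)$ and hence cannot factor through such a blow-up. What the inclusion together with your two observations gives for free is only that the limit over $\I$ is a \emph{cluster} point of the net over $\I_0$: for any $Z\in\I_0$ one can choose a common refinement $\xx'$ of $Z$ and $\xx_\varepsilon$, and then $\xx'_e$ dominates $Z$ and its intersection number lies within $\varepsilon$ of the limit; but this controls the net over $\I_0$ only frequently, not eventually. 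To close this direction you need a slightly stronger form of your second observation, namely that every $Z\in\I_0$ dominating $(\xx_\varepsilon)_e$ receives a tower of non-essential blow-ups from some $\xx'\in\I$ dominating $\xx_\varepsilon$; this is of the same nature as, and should be proved together with, the inductive construction of $\xx_e$ and $\tilde\xx$ that you have already flagged as the main outstanding step.
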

\begin{proof} Corollary \ref{bmlexists} yields that all b-Mumford-Lear extensions for $\bar{L}_1$ and $\bar{L}_2$ exist on both $\I$ and $\I_0$. We need to show the identity
\[ \lim_{\xx \in \I}  \left[ \bar{L}_1,\xx \right] \cdot \left[ \bar{L}_2,\xx \right] = \lim_{\xx \in \I_0}  \left[ \bar{L}_1,\xx \right] \cdot \left[ \bar{L}_2,\xx \right] \]
in $\rr$, where both limits are taken in the sense of nets. More precisely, assuming the limit on one side exists in $\rr$, we need to show that the limit on the other side exists in $\rr$, and that the two limits are equal.
It suffices to verify the following: if $\varphi \colon \xx' \to \xx$ is a simple blow-up in a point not in $B(\xx)$, then 
$\left[ \bar{L}_1,\xx' \right] \cdot \left[ \bar{L}_2,\xx' \right] = \left[ \bar{L}_1,\xx \right] \cdot \left[ \bar{L}_2,\xx \right]$. By Theorem \ref{admissibility} we know that both $\left[ \bar{L}_1,\xx \right]$ and $\left[ \bar{L}_2,\xx \right]$ have logarithmic growth on $\xx \setminus B^\sing$, up to taking a positive tensor power. This implies that $\left[ \bar{L}_1,\xx' \right]=\varphi^*\left[ \bar{L}_1,\xx \right]$ and $\left[ \bar{L}_2,\xx' \right]=\varphi^*\left[ \bar{L}_2,\xx \right]$ under our assumption on $\varphi$. We deduce that 
\[ \begin{split} \left[ \bar{L}_1,\xx' \right] \cdot \left[ \bar{L}_2,\xx' \right]  & = \varphi^*\left[ \bar{L}_1,\xx \right] \cdot \varphi^*\left[ \bar{L}_2,\xx \right] \\
& = \left[ \bar{L}_1,\xx \right] \cdot \left[ \bar{L}_2,\xx \right]
\end{split} \]
by the projection formula.
\end{proof}

\section{Computing on the essential category} \label{sec:comp_ess}

We continue with the setting of the previous section, and work on the essential category $\I_0$. We assume that $S \to C^0$ has semistable reduction over $C$. Let $P \colon C^0 \to S$ be a section of the map $\pi \colon S \to C^0$. Let $\llbar$ be one of the smooth hermitian line bundles $\opar$ or $\omar$ on the complex surface $S$. Let $\mmbar$ be a smooth hermitian line bundle on $C^0$ which is good on $C$ in the sense of Mumford. We prove a result that characterizes the b-Mumford-Lear extension $\left[ \llbar \otimes \pi^*\mmbar \right] $ in $\bPic_\qq(\I_0)$ as an admissible line bundle in $\bPic_\qq(\I_0)$ (see Section~\ref{sec:admis}). 

Let $Q \colon C^0 \to S$ be another section of the map $\pi \colon S \to C^0$. Let $\bar{N}$ be any of the smooth hermitian line bundles $\pair{ \opar, \omar }$, $\pair{\opar,\oqar}$ or $\pair{\omar,\omar}$ on $C^0$, where the Deligne pairings are taken along the map $\pi$. Then by Theorem~\ref{leararakelov} each $\bar{N}$ has a Mumford-Lear extension $\left[ \bar{N},C \right]$ over $C$. Results from \cite{djfaltings} imply that one can give a precise description of these Mumford-Lear extensions in terms of the Deligne pairing on the group $ \overline{\Pic}_a(\I_0)$ of admissible b-line bundles.

Let $\Gamma$ be the essential skeleton of $S$ over $C$, endowed with its canonical structure of metrized graph. Let $\oo(\bar{P}) \in \cPic_\qq(\I_0)$ resp.\ $\omega \in \cPic_\qq(\I_0)$ be the canonical classes determined by the section $P$ of $S \to C^0$ resp.\ the relative dualizing sheaf $\omega$ of the minimal regular model $\xx_0$ over $C$, and put $x=R(\oo(\bar{P}))$ and $K=R(\omega)$  in $\Div_\qq(\Gamma_\qq)$. We then have the admissible b-line bundles 
\[  \oo(P)_a = \oo(\bar{P}) \otimes \oo( g_\mu(x,y)) \, , \quad \omega_a =\omega \otimes \oo(-g_\mu(y,y)) \]
in $\bPic_\qq(\I_0)$, where $g_\mu$ is Zhang's Arakelov-Green's function with respect to the admissible measure $\mu=\mu_K$ on $\Gamma$. Any Deligne pairing of two b-line bundles among $  \oo(P)_a$, $\oo(Q)_a$ and $ \omega_a$ exists and takes values in $\Pic_\qq(C)$, by Corollary \ref{productexists}.
\begin{thm} \label{maindj}  
Assume that $S \to C^0$ has semistable reduction over $C$. Then the equalities 
\[ \begin{split}  [\pair{ \opar, \omar },C] &= \pair{\oo(P)_\a,\omega_\a} \, ,\\
  [\pair{\opar,\oqar},C] &= \pair{\oo(P)_\a,\oo(Q)_\a} \, , \\
  [\pair{ \omar, \omar },C] &= \pair{\omega_\a,\omega_\a} \end{split} \]
hold in $\Pic_\qq(C)$.
\end{thm}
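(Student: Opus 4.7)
Strategy. All three asserted identities live in $\Pic_\qq(C)$, and I would prove them in parallel since the arguments are structurally identical; I outline the case of $\pair{\opar,\omar}$. The plan is to reduce both sides to explicit $\qq$-divisors supported on the set $\mathfrak C$ of cusps of $C^0$ in $C$ and identify them cusp by cusp, with $\pair{\oo(\bar P),\omega}$ taken along $\xx_0 \to C$ as a common ``naive'' part.

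For the right-hand side, I would apply Corollary \ref{productexists} to $\pair{\oo(P)_\a,\omega_\a}$. Writing $\oo(P)_\a = \oo(\bar P)\otimes\oo(g_\mu(x,y))$ and $\omega_\a = \omega\otimes\oo(-g_\mu(y,y))$, with $x=R(\oo(\bar P))$ and $K=R(\omega)$, the formula of Corollary \ref{productexists} together with Proposition \ref{pairing_with_laplacian} yields
\[ \pair{\oo(P)_\a,\omega_\a} \;=\; \pair{\oo(\bar P),\omega}\otimes\oo\!\left(\sum_{\mathfrak c\in\mathfrak C}\alpha_{\mathfrak c}\cdot\mathfrak c\right), \]
where $\alpha_{\mathfrak c}$ is a combinatorial pairing on the component $\Gamma_{\mathfrak c}$ of $\Gamma$ above $\mathfrak c$. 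Using $c_1(\oo(P)_\a)=\mu$, $c_1(\omega_\a)=\deg(\omega)\mu$, and the defining properties (\ref{def_green}) and (\ref{constant}) of $g_\mu$, I can rewrite $\alpha_{\mathfrak c}$ in closed form as an explicit expression in $g_\mu$ evaluated at $x$ and $K$ on $\Gamma_{\mathfrak c}$ (up to the constant $c$).

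For the left-hand side, the classical Deligne pairing $\pair{\oo(\bar P),\omega}$ computed on the minimal regular model $\xx_0$ is a line bundle on $C$ whose restriction to $C^0$ underlies the smooth hermitian bundle $\pair{\opar,\omar}$. By Theorem \ref{leararakelov}, $\pair{\opar,\omar}$ has a Mumford-Lear extension $[\pair{\opar,\omar},C]$, which therefore differs from $\pair{\oo(\bar P),\omega}$ by a $\qq$-divisor on $C$ supported on $\mathfrak C$; the multiplicity at each $\mathfrak c$ records the rate of growth of the Deligne--Arakelov metric at $\mathfrak c$. The key analytical input is the asymptotic formula of \cite{djfaltings} which, under the semistability hypothesis, identifies this multiplicity with a value of Zhang's admissible Green's function on the metrized dual graph of the singular fibre at $\mathfrak c$, i.e.\ with $\alpha_{\mathfrak c}$ computed above. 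This gives the first identity; the identities for $\pair{\opar,\oqar}$ and $\pair{\omar,\omar}$ then follow by the same route, using the isometry (\ref{deligne_poincare}) and the adjunction (\ref{adjunction}) to reduce to the corresponding asymptotic formulas of \cite{djfaltings}.

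The principal obstacle is bookkeeping of conventions: matching the normalization of Zhang's admissible pairing used in \cite{djfaltings} with the present choices of $K=R(\omega)$, of the constant $c$ of (\ref{constant}), and of the signs in the definitions of $\oo(P)_\a$ and $\omega_\a$. The genuinely analytic content---that the Deligne--Arakelov metric degenerates in the admissible way on the essential skeleton---is already encoded in \cite{djfaltings} and in Theorem \ref{leararakelov}; once the conventions are aligned, the three identities reduce to a translation between two equivalent presentations of the same admissible data on $\Gamma$.
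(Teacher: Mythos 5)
Your outline is consistent with the paper, but the paper's entire proof of this theorem is the single citation \cite[Theorem~1.4]{djfaltings}: the statement is imported verbatim from that reference, whose proof proceeds essentially as you describe (reducing both sides to cusp-supported $\qq$-divisors and matching the growth rates of the Deligne--Arakelov metric at each cusp against values of Zhang's admissible Green's function on the dual graph). Since the analytic heart of your sketch is itself delegated to the asymptotic formulas of \cite{djfaltings}, your argument and the paper's rest on the same external input and are essentially the same.
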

\begin{proof} This is \cite[Theorem~1.4]{djfaltings}.
\end{proof}
In fact, we can identify the b-Mumford-Lear extensions $\left[ \opar \right]$ resp.\ $\left[ \omar \right]$ on $\I_0$ themselves with $\oo(P)_a$ resp.\ $ \omega_a$ in $\bPic_\qq(\I_0)$.  We will show this now. Note that we have a map $\Pic_\qq(C) \to \cPic_\qq(\I_0)$ given by pullback, and hence a natural map $\Pic_\qq(C) \to \bPic_\qq(\I_0)$. We have a Mumford extension $[\mmbar,C]\in \Pic_\qq(C)$ of $\mmbar$ over $C$, and by a slight abuse of notation we will also denote by $[\mmbar,C]$ its image in $\bPic_\qq(\I)$. 
\begin{thm} \label{characterize} Let $\llbar$ be either the smooth hermitian line bundle $\opar$ or $\omar$ on the surface $S$ over $C^0$. Let $\mmbar$ be a smooth hermitian line bundle on $C^0$ which is good on $C$ in the sense of Mumford. Assume that $S \to C^0$ has semistable reduction over $C$. Then the b-Mumford-Lear extension $ \left[ \llbar \otimes \pi^*\mmbar \right] \in \bPic_\qq(\I_0) $ on $\I_0$  is an admissible b-line bundle on $S$. More precisely, the identities 
\[ \left[\omar \otimes \pi^*\mmbar \right] = \omega_a \otimes [\mmbar,C] \quad \textrm{and} \quad \left[ \opar  \otimes \pi^*\mmbar \right] = \oo(P)_a   \otimes [\mmbar,C] \]
hold in $\bPic_\qq(\I_0)$. 
\end{thm}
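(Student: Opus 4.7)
First, I would reduce to the case $\mmbar = \oo$. Since $\mmbar$ is good on $C$, its Mumford-Lear extension $[\mmbar, C] \in \Pic_\qq(C)$ is an honest Mumford extension; pulling back along the structure morphisms $\xx \to C$, the line bundle $\pi^*\mmbar$ has Mumford extension $\pi_\xx^*[\mmbar, C]$ on every $\xx \in \I_0$. By multiplicativity of Mumford-Lear extensions with respect to tensor products on each fixed model, followed by passage to the inverse limit over $\I_0$, one obtains the decomposition
\[ [\llbar \otimes \pi^*\mmbar] \,=\, [\llbar] \otimes [\mmbar, C] \qquad \text{in } \bPic_\qq(\I_0), \]
where $[\mmbar, C]$ on the right is viewed in $\bPic_\qq(\I_0)$ via pullback. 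This reduces the theorem to proving the two identifications $[\opar] = \oo(P)_a$ and $[\omar] = \omega_a$ in $\bPic_\qq(\I_0)$.

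Next I would focus on $[\opar] = \oo(P)_a$; the case of $\omega_a$ is structurally identical. Via the isomorphism $\delta_0 \colon \bDiv_\qq(\I_0) \isom \Div_\qq(S) \oplus \Map(\Gamma_\qq, \qq)$, it suffices to compare the b-divisors of a common rational section. For the canonical section $1_P$ of $\oo(P)$, the b-divisor of $\oo(P)_a = \oo(\bar{P}) \otimes \oo(g_\mu(x, \cdot))$ corresponds under $\delta_0$ to the pair $(P, g_\mu(x, \cdot))$, while the b-divisor $\bdivisor(1_P)$ attached to $[\opar]$ by Lemma~\ref{compatibleII} has restriction $P$ to $S$ and is of the form $(P, h)$. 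By the normalization of $\gamma_\xx$ introduced in Section~\ref{sec:model_skel}, the function $h \in \Map(\Gamma_\qq, \qq)$ satisfies $h(y) = a_\xx(y)/v_\xx(y)$ for every $\xx \in \I_0$ and every vertex $y \in V(\xx)$, where $a_\xx(y)$ is the boundary multiplicity of $y$ in the Mumford-Lear extension $[\opar, \xx]$.

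The heart of the argument is then the pointwise identification $h = g_\mu(x, \cdot)$ on $\Gamma_\qq$, which I would extract from the analytic results of \cite{djfaltings} recalled above in Theorem~\ref{maindj}. The asymptotic analysis of the family Arakelov metric in \cite{djfaltings} relates the degeneration of the metric on $\oo(P)$ near a boundary divisor of fiber multiplicity $v_\xx(y)$ to Zhang's Arakelov-Green's function $g_\mu$ at the corresponding skeleton vertex, and in particular identifies $a_\xx(y)$ with $v_\xx(y)\,g_\mu(x, y)$; the case of $\omar$ is analogous, with $-g_\mu(y, y)$ in place of $g_\mu(x, \cdot)$ and $K = R(\omega)$ playing the role of the canonical divisor on $\Gamma$ used to define the admissible measure $\mu$. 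The main obstacle is the passage from the Deligne-pairing form of Theorem~\ref{maindj} to this vertex-by-vertex multiplicity identification: while the local analytic computation in \cite{djfaltings} supplies it directly, extracting it from Theorem~\ref{maindj} alone would require an independent rigidity argument for admissible b-line bundles with prescribed Deligne pairings against sufficiently many test sections $\oqar$.
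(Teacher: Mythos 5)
Your reduction to the case $\mmbar=\oo$ and your reformulation of the problem as the pointwise identity $h=g_\mu(x,\cdot)$ on $\Gamma_\qq$ (equivalently, that the boundary multiplicity of $[\opar,\xx]$ along $y\in V(\xx)$ equals $v_\xx(y)\,g_\mu(x,y)$) both match the paper. But the step you yourself flag as ``the main obstacle'' is exactly the content of the theorem, and neither of the two routes you offer closes the gap. Appealing to ``the local analytic computation in \cite{djfaltings}'' is not available as stated: the results of \cite{djfaltings} that the paper imports (Theorems \ref{leararakelov} and \ref{maindj}) are statements about Mumford--Lear extensions of Deligne pairings \emph{on the base curve} $C$, not about the multiplicities of boundary components in the extension of $\opar$ or $\omar$ on a surface model $\xx$. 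And the alternative ``rigidity argument against sufficiently many test sections $\oqar$'' cannot work in general, because a smooth proper curve of positive genus over $C^0$ may have very few sections (possibly only $P$ itself), so there is no adequate supply of test objects.

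The paper's actual bridge from Theorem \ref{maindj} to the vertex-by-vertex multiplicities is the idea your proposal is missing. One writes $\opar$ and $\omar$ themselves as Deligne pairings along the first projection $p_1\colon T=S\times_{C^0}S\to S$, namely $\opar\isom\pair{\oo(\Delta)_{\Ar},p_2^*\opar}$ and $\omar\isom\pair{\oo(\Delta)_{\Ar},p_2^*\omar}$. To compute the multiplicity at $y\in V(\xx)$ one then restricts to a test curve $f\colon(Z,0)\to\xx$ meeting $y$ transversally at a point of $y\setminus B^{\sing}$; since $f(Z)$ avoids $B^{\sing}$, the formation of the Mumford--Lear extension commutes with pullback along $f$, and Theorem \ref{maindj} applied to the one-parameter family $T\times_S Z^*\to Z^*$ over the completion $Z$ returns exactly $n(y)=g_\mu(x,y)\,v_\xx(y)$ and $m(y)=-g_\mu(y,y)\,v_\xx(y)$. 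Density of $\Gamma_\qq$ in $\Gamma$ then finishes the argument. Without this device, or a genuine substitute for it, your proof is incomplete at its central step.
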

\begin{proof}  We recall that by Corollary \ref{bmlexists} the b-Mumford-Lear extension $\left[ \llbar \otimes \pi^*\mmbar \right] $ exists in $\bPic_\qq(\I_0)$. Pick a regular $\emph{nc}$-model $\xx \in \I_0$. Let  $B=\xx \setminus S$ be its reduced boundary divisor and let $G$ denote the dual graph of $B$, with vertex set $V(G)$ and edge set $E(G)$. 
We let  $m, n \in \qq^{V(G)}$ be determined by the conditions
\[ [\omar,\xx]= \omega \otimes \oo( \sum_{y \in V(G)} m(y) y) \, , \quad 
[\opar,\xx] = \oo(\bar{P}) \otimes \oo( \sum_{y \in V(G)} n(y) y )\, .  \]
Let $y \in V(G)$. In order to compute the coefficients $m(y), n(y)$ it suffices to study the behavior of the metric on $\omar$ and $\opar$ locally in the complex topology near any point $p$ of $y \setminus B^\sing$. Here, the actual choice of $p \in y \setminus B^\sing$ is irrelevant since $y \setminus B^\sing$ is connected in the complex topology. Thus, let $(Z,0)$ be a pointed smooth connected complex curve and let $f \colon Z \to \xx$ be a morphism with $f(0)=p \in y \setminus B^\sing$, intersecting $y$ transversally, and such that $Z^*=Z \setminus \{0\}$ is mapped into $S$.  
Let $T = S \times_{C^0} S$, and let $p_1, p_2 \colon T \to S$ denote the projections on the first and second coordinate, respectively. Let $\Delta \colon S \to T$ denote the diagonal map. As we saw earlier in the proof of Theorem \ref{admissibility} we then have canonical isometries
\[ \omar \isom \pair{\oo(\Delta)_{\mathrm{Ar}},p_2^*\omar} \, , \quad 
\opar \isom \pair{\oo(\Delta)_{\mathrm{Ar}}, p_2^*\oo(P)_{\mathrm{Ar}}} \]
of smooth hermitian line bundles on $S$, where both Deligne pairings are taken along $p_1$ and where we consider $\Delta$ as a section of $p_1$.
We apply Theorem \ref{maindj} above to the base change $T \times_S Z^* \to Z^*$ along $f|_{Z^*} \colon Z^* \to S$ and the smooth completion $Z$ of $Z^*$. As the image $f(Z)\subset \xx$ avoids $B^\sing$ we find that the formation of the Mumford-Lear extension and pulling back along $f$ commute. This yields the equalities
\[  m(y) = -g_\mu(y,y)\,v_\xx(y) \quad \textrm{and} \quad n(y) = g_\mu(x,y)\,v_\xx(y)   \]
in $\qq$, where $x \in V(G)$ is the unique irreducible component of $B$ that lies in the same fiber over $C$ as $y$ and intersects $\bar{P}$. We thus obtain
\[ \label{finitelevel} [\omar,\xx] = \omega \otimes \oo(-\sum_{y \in V(G)} g_\mu(y,y)\,v_\xx(y) \,y )\]
and
\[ [\opar,\xx] = \oo(\bar{P}) \otimes \oo( \sum_{y \in V(G)} g_\mu(x,y)\,v_\xx(y)\, y ) \]
in $\Pic_\qq(\xx)$. We can now vary $\xx$ through $\I_0$. As $\Gamma_\qq \subset \Gamma$ is dense we deduce that 
\[ \left[\omar \right] =\omega \otimes \oo(-g_\mu(y,y)) =  \omega_a  \]
and similarly 
\[   \left[ \opar \right]  = \oo(\bar{P}) \otimes \oo(g_\mu(x,y))=\oo(P)_a \]
in $\bPic_\qq(\I_0)$. Finally it is straightforward to check that the identity $[\mmbar,C]=\left[ \pi^*\mmbar,\I \right]$ holds in $\bPic_\qq(\I_0)$.
The theorem follows. 
\end{proof}

We continue to let $\pi \colon S \to C^0$ denote a smooth proper curve of positive genus over the curve $C^0$, and $P, Q \colon C^0 \to S$ two sections of $\pi$. Let $\bar{\ll}_1$, $\bar{\ll}_2$ be any of the smooth hermitian line bundles $\opar$, $\oqar$, $\omar$ on $S$, and let $\bar{\mm}_1$, $\bar{\mm}_2$ be smooth hermitian line bundles on $C^0$ which are good on $C$ in the sense of Mumford. 
\begin{thm} \label{pairing_is_extension} Assume that $S$ has semistable reduction over $C$. The Deligne pairing $\pair{ \left[ \bar{\ll}_1 \otimes \pi^*\bar{\mm}_1 \right], \left[ \bar{\ll}_2 \otimes \pi^*\bar{\mm}_2 \right]}$ of b-Mumford-Lear extensions exists in $\Pic_\qq(C)$. More precisely, the equality
\begin{equation} \label{Deligne_is_ML}  \pair{ \left[\bar{\ll}_1 \otimes \pi^*\bar{\mm}_1 \right],\left[ \bar{\ll}_2 \otimes \pi^*\bar{\mm}_2 \right]} = \left[
\pair{ \bar{\ll}_1 \otimes \pi^*\bar{\mm}_1 , \bar{\ll}_2 \otimes \pi^*\bar{\mm}_2 } , C\right] 
\end{equation}
holds in $\Pic_\qq(C)$.
\end{thm}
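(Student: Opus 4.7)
The strategy is to reduce both sides of (\ref{Deligne_is_ML}) to the same expression using the explicit description of the b-Mumford-Lear extensions given by Theorem \ref{characterize}, and then to apply Theorem \ref{maindj} as the main analytic input.

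First I would extend Proposition \ref{can_restrict} from the intersection number to the Deligne pairing itself: for any simple blow-up $\varphi \colon \xx' \to \xx$ whose center lies outside $B(\xx)^\sing$, Theorem \ref{admissibility} gives $[\bar{\ll}_i \otimes \pi^*\bar{\mm}_i, \xx'] = \varphi^*[\bar{\ll}_i \otimes \pi^*\bar{\mm}_i, \xx]$, and the classical projection formula for the Deligne pairing then yields that the Deligne pairing on $\xx'$ coincides with that on $\xx$. Consequently we may replace $\I$ by $\I_0$ throughout the left-hand side of (\ref{Deligne_is_ML}).

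By Theorem \ref{characterize}, on $\I_0$ we have $\bigl[\bar{\ll}_i \otimes \pi^*\bar{\mm}_i\bigr] = \bigl[\bar{\ll}_i\bigr]_\a \otimes [\bar{\mm}_i, C]$ in $\bPic_\qq(\I_0)$, where $\bigl[\bar{\ll}_i\bigr]_\a$ is one of $\omega_\a, \oo(P)_\a, \oo(Q)_\a$, and where $[\bar{\mm}_i, C]$ is identified with its pullback from $C$. The Deligne pairing on compactified b-line bundles constructed at the end of Section \ref{bpairing} is bilinear, and satisfies the projection formula $\pair{\mathbb{L}, \pi^*N} = N^{\otimes \deg_\pi \mathbb{L}}$ for any compactified b-line bundle $\mathbb{L}$ and any $N \in \Pic_\qq(C)$; both properties follow from the corresponding statements on each model $\xx$ together with the observation that the combinatorial correction term $i$ vanishes whenever one of its arguments comes from a pullback. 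Combining bilinearity, the projection formula, and the triviality of $\pair{[\bar{\mm}_1, C], [\bar{\mm}_2, C]}$ (which has fiberwise degree zero on both sides), we obtain
\[
\pair{\bigl[\bar{\ll}_1 \otimes \pi^*\bar{\mm}_1\bigr], \bigl[\bar{\ll}_2 \otimes \pi^*\bar{\mm}_2\bigr]} = \pair{\bigl[\bar{\ll}_1\bigr]_\a, \bigl[\bar{\ll}_2\bigr]_\a} \otimes [\bar{\mm}_1, C]^{\otimes \deg_\pi \bar{\ll}_2} \otimes [\bar{\mm}_2, C]^{\otimes \deg_\pi \bar{\ll}_1}.
\]
Existence of the left-hand side in $\Pic_\qq(C)$ is guaranteed by Corollary \ref{productexists} applied to the admissible compactified b-line bundles $\bigl[\bar{\ll}_i\bigr]_\a$. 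By Theorem \ref{maindj}, the first factor on the right equals $\bigl[\pair{\bar{\ll}_1, \bar{\ll}_2}, C\bigr]$.

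For the right-hand side of (\ref{Deligne_is_ML}), the analogous bilinearity and projection formula for the classical Deligne pairing on $C^0$ yield the isometry
\[
\pair{\bar{\ll}_1 \otimes \pi^*\bar{\mm}_1, \bar{\ll}_2 \otimes \pi^*\bar{\mm}_2} \cong \pair{\bar{\ll}_1, \bar{\ll}_2} \otimes \bar{\mm}_1^{\otimes \deg_\pi \bar{\ll}_2} \otimes \bar{\mm}_2^{\otimes \deg_\pi \bar{\ll}_1}
\]
of smooth hermitian line bundles on $C^0$. Each factor on the right admits a Mumford-Lear extension over $C$: the first by Theorem \ref{leararakelov}, the others by Mumford goodness of the $\bar{\mm}_i$. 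Since Mumford-Lear extensions commute with tensor products when both factors admit extensions, the Mumford-Lear extension of the Deligne pairing over $C$ agrees with the expression obtained in the previous paragraph, which proves (\ref{Deligne_is_ML}). The main obstacle is the careful verification of the formal properties of the Deligne pairing on compactified b-line bundles—namely, bilinearity, the projection formula for pullbacks from $C$, and compatibility of relative degrees—which, while natural from the formalism of Section \ref{bpairing}, require tracking both the model-level Deligne pairing and the combinatorial correction at each cusp in $\mathfrak{C}$.
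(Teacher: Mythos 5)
Your proposal is correct and follows essentially the same route as the paper: the core of both arguments is to combine Theorem \ref{characterize} (identifying $\left[\bar{\ll}_i \otimes \pi^*\bar{\mm}_i\right]$ with $\left[\bar{\ll}_i\right]\otimes[\bar{\mm}_i,C]$ as admissible b-line bundles) with Theorem \ref{maindj} (giving $\pair{\left[\bar{\ll}_1\right],\left[\bar{\ll}_2\right]} = \left[\pair{\bar{\ll}_1,\bar{\ll}_2},C\right]$), and then to expand both sides of (\ref{Deligne_is_ML}) by bilinearity and the projection formula for the Deligne pairing. Your opening reduction from $\I$ to $\I_0$ is superfluous here since the theorem is already posed on the essential category, and your closing remarks merely spell out formal properties the paper uses implicitly; neither changes the substance of the argument.
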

\begin{proof}  Combining Theorem \ref{maindj}  and Theorem \ref{characterize}  we find the equalities
\[  \pair{ \left[ \omar \right], \left[ \omar \right] }  = \pair{ \omega_a,\omega_a} = \left[ \pair{\omar,\omar} , C\right] \, ,  \]
and similarly
\[  \pair{ \left[ \opar \right], \left[ \omar\right] } = \pair{\oo(P)_a,\omega_a} = \left[ \pair{\opar,\omar  }, C\right]  \]
and
\[ \pair{  \left[ \opar \right], \left[ \oqar\right] }  = \pair{ \oo(P)_a,\oo(Q)_a}  = \left[ \pair{\opar, \oqar }, C \right]   \]
in $\Pic_\qq(C)$. We conclude that 
\[ \begin{split} \pair{ \left[\bar{\ll}_1 \otimes \pi^*\bar{\mm}_1 \right],\left[ \bar{\ll}_2 \otimes \pi^*\bar{\mm}_2 \right]}  & =
 \pair{ \left[\bar{\ll}_1 \right]\otimes \left[\bar{\mm}_1 , C\right],\left[ \bar{\ll}_2 \right] \otimes \left[\bar{\mm}_2 , C\right]}  \\
 & = \pair{ \left[ \llbar_1 \right ], \left[ \llbar_2 \right]} \otimes \left[\mmbar_1,C\right]^{\deg \ll_2} \otimes \left[\mmbar_2,C\right]^{\deg \ll_1} \\
 & = \left[ \pair{\llbar_1,\llbar_2} \right] \otimes \left[\mmbar_1,C\right]^{\deg \ll_2} \otimes \left[\mmbar_2,C\right]^{\deg \ll_1} \\
 & = \left[\pair{ \bar{\ll}_1 \otimes \pi^*\bar{\mm}_1 , \bar{\ll}_2 \otimes \pi^*\bar{\mm}_2 } , C\right] 
\end{split} \]
in $\Pic_\qq(C)$. This proves the theorem.
\end{proof}

\section{Proof of Theorem \ref{chernweil}} \label{sec:proofcw}

We can now prove Theorem \ref{chernweil}, which we reformulate for convenience.
\begin{thm} \label{chernweil_bis}  Assume that $S$ has semistable reduction over $C$. Let $\I$ denote the category of all regular \emph{nc}-models of $S$ over $C$. The intersection number
\[ \left[ \bar{\ll}_1 \otimes \pi^*\bar{\mm}_1 ,\I\right] \cdot 
 \left[ \bar{\ll}_2 \otimes \pi^*\bar{\mm}_2 , \I  \right] \]
of b-Mumford-Lear extensions in $\bPic_\qq(\I)$ exists and is an element of $\qq$. Moreover, the equality
\[ \left[ \bar{\ll}_1 \otimes \pi^*\bar{\mm}_1 ,\I\right] \cdot 
 \left[ \bar{\ll}_2 \otimes \pi^*\bar{\mm}_2 , \I \right] = 
\int_S c_1(\bar{\ll}_1 \otimes \pi^*\bar{\mm}_1 ) \wedge c_1(\bar{\ll}_2 \otimes \pi^*\bar{\mm}_2) 
\]
holds in $\qq$.
\end{thm}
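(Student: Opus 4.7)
My plan is to prove the identity by showing that both sides equal the common rational number $\deg [\pair{\bar{L}_1,\bar{L}_2},C]$, where I write $\bar{L}_i=\bar{\ll}_i\otimes \pi^*\bar{\mm}_i$ for brevity and where the Mumford-Lear extension in $\Pic_\qq(C)$ exists by Theorem~\ref{pairing_is_extension}.

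For the left-hand side, I would first invoke Proposition~\ref{can_restrict} to replace the category $\I$ by the cofinal essential subcategory $\I_0$. By Theorem~\ref{characterize} each factor $\left[\bar{L}_i,\I_0\right]$ is an admissible b-line bundle in the sense of Section~\ref{sec:admis}, so Corollary~\ref{productexists} guarantees that the intersection product exists in $\rr$ and equals the degree over $C$ of the Deligne pairing $\pair{\left[\bar{L}_1\right],\left[\bar{L}_2\right]}\in\Pic_\rr(C)$. Theorem~\ref{pairing_is_extension} then identifies this Deligne pairing with $[\pair{\bar{L}_1,\bar{L}_2},C]$, so the left-hand side equals $\deg [\pair{\bar{L}_1,\bar{L}_2},C]$ and in particular lies in $\qq$.

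For the right-hand side I would use the standard pushforward formula for the curvature of a Deligne pairing (cf.~\cite[Section~XIII.5]{acg} and~\cite{de}), namely the identity $\pi_*\bigl(c_1(\bar{L}_1)\wedge c_1(\bar{L}_2)\bigr)=c_1(\pair{\bar{L}_1,\bar{L}_2})$ of smooth $(1,1)$-forms on $C^0$, and combine it with Fubini to rewrite the double integral over $S$ as $\int_{C^0} c_1(\pair{\bar{L}_1,\bar{L}_2})$. It then suffices to establish the one-dimensional Chern-Weil identity
\[
\int_{C^0} c_1(\pair{\bar{L}_1,\bar{L}_2}) \; = \; \deg [\pair{\bar{L}_1,\bar{L}_2},C]
\]
on the curve $C$.

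This last identity is the main obstacle. The tool I would apply is the mild generalization of the lemma mentioned in the introduction: any smooth hermitian line bundle on $C^0$ admitting a Mumford-Lear extension on $C$, which can moreover be written as a $\qq$-linear combination of smooth hermitian line bundles each being semipositive on $C^0$ and having logarithmic growth on $C$, satisfies Chern-Weil theory on $C$. To verify the hypothesis for $\pair{\bar{L}_1,\bar{L}_2}$ I would expand via the multilinearity of the Deligne pairing. Cross-terms involving $\pi^*\bar{\mm}_j$ reduce, using $\pair{\pi^*\bar{\mm}_j,\bar{\ll}_i}\isom \bar{\mm}_j^{\otimes \deg \ll_i}$ and $\pair{\pi^*\bar{\mm}_1,\pi^*\bar{\mm}_2}=0$ on a family of curves, to tensor powers of the $\bar{\mm}_j$, which are good on $C$ by hypothesis and hence handled directly by Mumford's theorem~\cite{hi}. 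The genuinely new content is concentrated in the Arakelov Deligne pairings $\pair{\opar,\omar}$, $\pair{\opar,\oqar}$ and $\pair{\omar,\omar}$ on $C^0$. I would attack these case by case, using the adjunction isomorphism $\omar\isom \pair{\oo(\Delta)_\Ar,p_2^*\omar}^{\otimes -1}$ and the key identity~(\ref{deligne_poincare}), $\delta^*\bb \isom \pair{\oo(P-Q)_\Ar,\oo(P-Q)_\Ar}^{\otimes -1}$, to express each as a $\qq$-linear combination of hermitian line bundles of the form $\delta^*\bb$ pulled back along suitable Abel-Jacobi sections $\delta\colon C^0 \to J$. Theorem~\ref{lear_exists} supplies exactly what the generalized lemma requires, namely semipositivity on $C^0$ and existence of a Mumford-Lear extension over $C$ for each such piece. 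Carefully bookkeeping these linear combinations, so that signs and degrees match up in a way compatible with the Mumford-Lear extension, is the delicate step; once this is done, the lemma applies and the chain of equalities identifying both sides with $\deg [\pair{\bar{L}_1,\bar{L}_2},C]$ closes up, yielding the theorem.
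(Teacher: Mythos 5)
Your overall architecture matches the paper's proof closely: reduce to $\I_0$ via Proposition~\ref{can_restrict}, identify the intersection number with $\deg\left[\pair{\bar{L}_1,\bar{L}_2},C\right]$ via Theorems~\ref{characterize} and~\ref{pairing_is_extension}, push the integral down to $C^0$ using the curvature formula for the Deligne pairing, strip off the $\bar{\mm}_j$ factors with Mumford's theorem, and finish with the one-dimensional Chern--Weil lemma for semipositive bundles with logarithmic growth. All of that is exactly what the paper does.

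The gap is in the last step, where you must verify the semipositivity hypothesis for the three Arakelov Deligne pairings. Your plan is to express \emph{each} of $\pair{\opar,\omar}$, $\pair{\opar,\oqar}$ and $\pair{\omar,\omar}$ as a $\qq$-linear combination of bundles of the form $\delta^*\bb$ for Abel--Jacobi sections $\delta\colon C^0\to J$. This works for $\pair{\opar,\oqar}$ (it is precisely the paper's argument: $\pair{\opar,\oqar}^{\otimes 2}\isom \delta^*\bb\otimes\pair{\opar,\omar}^{\otimes -1}\otimes\pair{\oqar,\omar}^{\otimes -1}$), but it cannot work for $\pair{\opar,\omar}$ and $\pair{\omar,\omar}$. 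The only normal functions at your disposal are built from the given sections and the canonical class, e.g.\ $z\mapsto [P_z-Q_z]$ or $z\mapsto[(2g-2)P_z-K_z]$; the latter yields, after adjunction $\pair{\opar,\opar}\isom\pair{\opar,\omar}^{\otimes-1}$, only the single relation $\delta^*\bb\isom\pair{\opar,\omar}^{\otimes 2g(2g-2)}\otimes\pair{\omar,\omar}^{\otimes -1}$, i.e.\ one semipositivity \emph{inequality} relating the two pairings, which does not let you conclude that either one individually is a difference of semipositive pieces. The semipositivity of $\pair{\opar,\omar}$ and $\pair{\omar,\omar}$ on $C^0$ is a genuine external input: the paper imports it from \cite[Proposition~4.7 and Corollary~4.8]{bhdj}, and without some such result your reduction is circular (you would need the semipositivity of $\pair{\opar,\omar}$ to extract it from the relation above). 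A minor further slip: your adjunction isometry $\omar\isom\pair{\oo(\Delta)_\Ar,p_2^*\omar}^{\otimes -1}$ carries a spurious inverse; the relevant consequence of (\ref{adjunction}) used here is $\pair{\opar,\opar}\isom\pair{\opar,\omar}^{\otimes -1}$.
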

First of all, by Lemma \ref{can_restrict} we can compute the intersection product of b-Mumford-Lear extensions by restricting to the category $\I_0$. Theorem \ref{pairing_is_extension} shows then by taking degrees on left and right hand side in (\ref{Deligne_is_ML}) that the equality
\[   \left[ \bar{\ll}_1 \otimes \pi^*\bar{\mm}_1,\I \right] \cdot  \left[ \bar{\ll}_2 \otimes \pi^*\bar{\mm}_2 , \I \right] =  \deg \left[
\pair{ \bar{\ll}_1 \otimes \pi^*\bar{\mm}_1 , \bar{\ll}_2 \otimes \pi^*\bar{\mm}_2 } , C\right]  \]
holds in $\qq$, from which the first statement of the theorem follows. The task we are left with is therefore to show that the equality
\[   \deg \left[ \pair{ \bar{\ll}_1 \otimes \pi^*\bar{\mm}_1 , \bar{\ll}_2 \otimes \pi^*\bar{\mm}_2 } , C\right]  = \int_S c_1(\bar{\ll}_1 \otimes \pi^*\bar{\mm}_1 ) \wedge c_1(\bar{\ll}_2 \otimes \pi^*\bar{\mm}_2)  \]
holds in $\qq$. This is now a purely analytic question.

First of all we have a chain of equalities
\[ \begin{split} \int_S c_1(\bar{\ll}_1 \otimes \pi^*\bar{\mm}_1 ) \wedge c_1(\bar{\ll}_2 \otimes \pi^*\bar{\mm}_2) & = \int_{C^0} \int_\pi c_1(\bar{\ll}_1 \otimes \pi^*\bar{\mm}_1 ) \wedge c_1(\bar{\ll}_2 \otimes \pi^*\bar{\mm}_2) \\
& = \int_{C^0} c_1(  \pair{\bar{\ll}_1 \otimes \pi^*\bar{\mm}_1, \bar{\ll}_2 \otimes \pi^*\bar{\mm}_2 } ) \\
 & = \int_{C^0} c_1(\pair{\bar{\ll}_1,\bar{\ll}_2}) + \deg \ll_2 \int_{C^0} c_1(\bar{\mm}_1) \\ & \hspace{1cm} +  \deg \ll_1 \int_{C^0} c_1(\bar{\mm}_2) \, . \end{split} \]
The second equality holds by \cite[Proposition~6.6]{de}.
By Mumford's results \cite{hi} as discussed in the Introduction the good hermitian line bundles $\mmbar_1, \mmbar_2$ satisfy Chern-Weil theory on $C$, in particular the equalities
\[ \int_{C^0} c_1(\bar{\mm}_1) = \deg \, [\bar{\mm}_1,C] \, , \quad 
\int_{C^0} c_1(\bar{\mm}_2) = \deg \, [\bar{\mm}_2,C]  \]
hold in $\zz$. It follows that we have
\[ \begin{split}  \int_S c_1(\bar{\ll}_1 \otimes \pi^*\bar{\mm}_1 ) \wedge c_1(\bar{\ll}_2 \otimes \pi^*\bar{\mm}_2) & = \int_{C^0} c_1(\pair{\bar{\ll}_1,\bar{\ll}_2}) + \deg \ll_2  \deg \, [\bar{\mm}_1,C]  \\ & \hspace{1cm} + \deg \ll_1   \deg \, [\bar{\mm}_2,C]  \, . \end{split} \]
On the other hand we have
\[ \begin{split} \deg \,[ \pair{ \bar{\ll}_{1} \otimes \pi^*\bar{\mm}_1, \bar{\ll}_2 \otimes \pi^*\bar{\mm}_2},C] & = \deg \,[ \pair{\bar{\ll}_{1},\bar{\ll}_{2}},C] + \deg \ll_2 \deg \, [\bar{\mm}_1 , C] \\ & \hspace{1cm} + \deg \ll_1 \deg \, [\bar{\mm}_2 , C]  \, .  \end{split} \]
This means that our task is reduced to showing the equality
\begin{equation} \label{int=deg_forL} \deg \,[ \pair{\bar{\ll}_{1},\bar{\ll}_{2}},C] = \int_{C^0} 
c_1(\pair{\bar{\ll}_1,\bar{\ll}_2}) 
\end{equation}
in $\qq$. But this equality is guaranteed by the next two lemmas.
\begin{lem} \label{acg_lemma} Let $\bar{N}=(N,\|\cdot\|)$ be a smooth hermitian line bundle on the smooth curve $C^0$. Assume that the first Chern form $c_1(\bar{N})$ is semipositive on $C^0$, and that $\bar{N}$ has a Mumford-Lear extension $[\bar{N},C] \in \Pic_\qq(C)$.  Then the equality
\begin{equation} \label{integral=degree} \int_{C^0} c_1(\bar{N}) = \deg \, [\bar{N},C] 
\end{equation}
holds in $\qq$. More generally, by additivity one has equation (\ref{integral=degree}) in $\qq$ if merely some positive tensor power of $\bar{N}$ can be written as the quotient of two smooth hermitian line bundles on $C^0$ that both have semipositive first Chern form, and that both have a Mumford-Lear extension over $C$.
\end{lem}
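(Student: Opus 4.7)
My plan is to derive (\ref{integral=degree}) via a boundary version of the Poincar\'e--Lelong formula, with the semipositivity hypothesis playing the role of an integrability guarantee. After replacing $\bar N$ by $\bar N^{\otimes e}$ for a suitable positive integer $e$, I may assume that $\bar N$ has logarithmic growth on $C$ in the sense of Definition \ref{loggrowth}, with an integral Mumford extension $M = [\bar N, C] \in \Pic(C)$ (the codimension-two set of Definition \ref{MLextension} is necessarily empty here, since $C$ is one-dimensional). I then pick a non-zero rational section $s$ of $M$ and split $\divisor(s) = D^0 + D^\infty$ according to whether the points of support lie in $C^0$ or on the boundary $C \setminus C^0$; observe that $\deg M = \deg D^0 + \deg D^\infty$.

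The core step is a local computation near each boundary point $p \in C \setminus C^0$. Fixing a local coordinate $z$ at $p$ and a local generator $\sigma$ of $M$ near $p$, I write $s = f \sigma$ where $f$ is a rational function with $\mathrm{ord}_p(f) = m_p$, the multiplicity of $p$ in $D^\infty$. The logarithmic growth hypothesis (\ref{estimate}) translates into the pointwise estimate
\[ \log \|s\|^2 = 2 m_p \log|z| + h(z), \qquad h(z) = O\bigl(\log(-\log|z|)\bigr), \]
on a punctured disc around $p$. Let $C^0_\epsilon \subset C$ be the complement of $\epsilon$-discs around the points of $\mathrm{supp}(D^0)$ and around the boundary points, and apply Stokes's theorem to the Poincar\'e--Lelong identity $c_1(\bar N) = -dd^c \log \|s\|^2$, valid on $C^0 \setminus \mathrm{supp}(D^0)$. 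The boundary circles around points of $D^0$ contribute $\deg D^0$ in the limit, while the boundary circle around each cusp $p$ contributes $m_p$ coming from the $2 m_p \log|z|$ piece; the $h(z)$ piece contributes $O(1/|\log \epsilon|) \to 0$ because $d^c h$ has size $O(1/|z \log|z||)$ on circles of radius $\epsilon$. Summing up and letting $\epsilon \to 0$ yields
\[ \lim_{\epsilon \to 0} \int_{C^0_\epsilon} c_1(\bar N) = \deg D^0 + \deg D^\infty = \deg M. \]

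Semipositivity of $c_1(\bar N)$ ensures that $\int_{C^0_\epsilon} c_1(\bar N)$ is monotone increasing as $\epsilon \to 0$, so by monotone convergence $\int_{C^0} c_1(\bar N) \in [0,+\infty]$ is well-defined and equals the limit above, hence is finite; dividing by $e$ then gives (\ref{integral=degree}). The main technical obstacle I anticipate is the rigorous application of Stokes's theorem to the non-smooth function $\log \|s\|^2$, which I would handle by regularizing with $\log(\|s\|^2 + \delta)$ for $\delta > 0$ and passing to the limit $\delta \to 0$, controlling the error via the logarithmic-growth bounds on $\log \|s\|^2$. Finally, the ``more generally'' statement follows from bi-additivity: if $\bar N^{\otimes e'}$ is isomorphic to a quotient $\bar N_1 \otimes \bar N_2^{\otimes -1}$ with $\bar N_1, \bar N_2$ both satisfying the hypotheses of the main assertion, then both sides of (\ref{integral=degree}) are additive with respect to such a tensor decomposition, and subtracting the identities for $\bar N_1$ and $\bar N_2$ yields the identity for $\bar N$.
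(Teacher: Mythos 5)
Your overall strategy is sound and amounts to a from-scratch proof of the result the paper simply cites for this lemma: the paper's own ``proof'' is the one-line reference to \cite[Lemma~XI.9.17]{acg}, and a direct Poincar\'e--Lelong/Stokes computation of the kind you set up is exactly what lies behind that citation. However, there is one genuine gap in your local computation at the cusps. You claim that the correction term $h(z)=\log\|s\|^2-2m_p\log|z|$ satisfies $d^c h=O(1/|z\log|z||)$ on the circle $|z|=\epsilon$, and you justify this by the pointwise bound $h(z)=O(\log(-\log|z|))$ coming from the logarithmic-growth estimate (\ref{estimate}). This inference is invalid: a pointwise bound on a smooth function on a punctured disc gives no control whatsoever on its derivative along small circles, so the flux $\int_{|z|=\epsilon}d^c h$ could a priori fail to tend to zero. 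This boundary term is precisely the delicate point of the lemma, and it is also where the semipositivity hypothesis must enter a second time --- not only to justify monotone convergence of the area integral, as in your final paragraph.

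The standard repair is as follows. On a small punctured disc around the cusp avoiding the zeros of $s$ one has $dd^c h=dd^c\log\|s\|^2=-c_1(\bar{N})\leq 0$, so $h$ is superharmonic there. Writing $\lambda(r)$ for the average of $h$ over the circle $|z|=r$, the flux $\phi(r)=\int_{|z|=r}d^c h$ equals $r\lambda'(r)$ up to a universal constant, and superharmonicity makes $\phi$ monotone in $r$; if its limit as $r\to 0$ were a nonzero $L$ then $\lambda(r)$ would grow like $L\log r$, contradicting $\lambda(r)=O(\log(-\log r))$. Hence $\phi(r)\to 0$, which is the statement you need. (Equivalently one can select good radii in each dyadic annulus by a mean-value argument, once integrability of $c_1(\bar{N})$ near the cusp is secured from semipositivity.) With this replacement your proof goes through; note that the regularization $\log(\|s\|^2+\delta)$ you propose only addresses the harmless singularities of $s$ inside $C^0$, not the boundary issue. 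The ``more generally'' step by bi-additivity is fine as stated.
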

\begin{proof} This follows immediately from \cite[Lemma~XI.9.17]{acg}.
\end{proof}
\begin{remark} Note that, since $C^0$ is a curve, the hermitian line bundle $\bar{N}$ has a Mumford-Lear extension over $C$ if and only if $\bar{N}$ has logarithmic growth on $C$.
\end{remark}
\begin{lem} As above let $\bar{\ll}_1$, $\bar{\ll}_2$ be any of the smooth hermitian line bundles $\opar$, $\oqar$, $\omar$ on $S$. Then some positive tensor power of the Deligne pairing $\pair{\llbar_1,\llbar_2}$ on $C^0$ can be written as the quotient of two smooth hermitian line bundles on $C^0$ that both have semipositive first Chern form, and that both have a Mumford-Lear extension over $C$.
\end{lem}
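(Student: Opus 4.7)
The plan is first to reduce via the Arakelov adjunction isometry in families: since $P^*\omar \isom P^*\opar^{\otimes -1}$ comes from the Poincar\'e residue used to define the Arakelov metric on $\omar$, we get isometries $\pair{\opar,\opar} \isom \pair{\opar,\omar}^{\otimes -1}$ and $\pair{\oqar,\oqar} \isom \pair{\oqar,\omar}^{\otimes -1}$ of smooth hermitian line bundles on $C^0$. Hence it suffices to write a positive tensor power of each of the four Deligne pairings $\pair{\opar,\oqar}$, $\pair{\opar,\omar}$, $\pair{\oqar,\omar}$ and $\pair{\omar,\omar}$ as a quotient of two semipositive hermitian line bundles on $C^0$ with Mumford-Lear extensions over $C$.

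The first three I would handle via Theorem~\ref{lear_exists} combined with a mild generalization of~\eqref{deligne_poincare}: for any relative degree-zero $\qq$-divisor class $D$ on $S$ that is a $\qq$-linear combination of $P$, $Q$ and the canonical class $K$, the associated Abel-Jacobi map $\alpha_D \colon C^0 \to J$ gives, by the same argument that yields~\eqref{deligne_poincare}, a canonical isometry $\alpha_D^*\bb \isom \pair{\oo(D)_\Ar,\oo(D)_\Ar}^{\otimes -1}$, which by Theorem~\ref{lear_exists} is a semipositive hermitian line bundle with Mumford-Lear extension over $C$. Applying this to the three classes $D=P-Q$, $D=K-(2g-2)P$ and $D=K-(2g-2)Q$, and expanding by bilinearity and adjunction, produces three semipositive Mumford-Lear-extendable hermitian line bundles $\bar S_1, \bar S_2, \bar S_3$. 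For $g \geq 2$ the coefficient $2g(2g-2)$ appearing in the expansion is non-zero, and a direct linear-algebra computation expresses each of $\pair{\opar,\oqar}$, $\pair{\opar,\omar}$, $\pair{\oqar,\omar}$ as a $\qq$-linear combination of $\bar S_1, \bar S_2, \bar S_3$ and $\pair{\omar,\omar}$; consequently some positive tensor power of each is a quotient of semipositive+ML line bundles, provided $\pair{\omar,\omar}$ itself admits such a presentation. When $g=1$, $\omar$ has relative degree zero and itself defines a normal function, so Theorem~\ref{lear_exists} directly exhibits $\pair{\omar,\omar}^{\otimes -1}$ as semipositive with Mumford-Lear extension.

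For $g \geq 2$ the remaining task is $\pair{\omar,\omar}$ itself. The plan is to compare with the Hodge bundle $\lambda = \pi_* \omar$ equipped with its $L^2$ Hodge metric: this is semipositive on $C^0$ by Griffiths's curvature formula for variations of Hodge structure, and under our semistable hypothesis it admits a Mumford-Lear extension over $C$ by Schmid's nilpotent orbit theorem (equivalently, by Deligne's canonical extension with the unipotent monodromy coming from semistability). The metric form of Mumford's isomorphism identifies $\pair{\omar,\omar}$ with $\lambda^{\otimes 12}$ on $C^0$ up to a tensor correction by the trivial hermitian line bundle $\bar\Delta_F = (\oo_{C^0}, \exp(-\delta_F/c))$ for a universal rational constant $c$, where $\delta_F$ denotes the Faltings invariant; under semistable reduction $\delta_F$ is smooth on $C^0$ with logarithmic growth at each cusp, so $\bar\Delta_F$ has a Mumford-Lear extension over $C$. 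The hard part I anticipate is the semipositivity, since the curvature $dd^c\delta_F$ need not be of definite sign: one must therefore present $\bar\Delta_F$ as a quotient of two semipositive+ML bundles, and the plan for this is to split $\delta_F = \phi_1 - \phi_2$ on $C^0$ as a difference of two smooth subharmonic functions with logarithmic growth, by extracting the explicit leading $\sum_i c_i \log|t_i|^{-1}$ behaviour near each cusp of $C$ and absorbing the smooth bounded remainder into a sufficiently positive auxiliary hermitian line bundle pulled back from $C$. Combined with the semipositivity and Mumford-Lear-extendability of $\lambda^{\otimes 12}$, this exhibits $\pair{\omar,\omar}$ as the required quotient, completing the case analysis.
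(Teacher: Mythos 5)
Your reduction of all six pairings to $\pair{\omar,\omar}$ is internally consistent and is a genuinely different organization from the paper's. The adjunction isometries, the extension of (\ref{deligne_poincare}) to the normal functions attached to the degree-zero classes $P-Q$, $K-(2g-2)P$ and $K-(2g-2)Q$ (legitimate, since Theorem \ref{lear_exists} applies to any admissible normal function and the required isometry is available from \cite[Section~7]{djfaltings}), and the elimination using the non-vanishing of $2g(2g-2)$ all check out. The paper proceeds quite differently: it imports the semipositivity of $\pair{\omar,\omar}$ and $\pair{\opar,\omar}$ directly from \cite[Proposition~4.7]{bhdj} and \cite[Corollary~4.8]{bhdj}, gets $\pair{\opar,\opar}$ by adjunction, and uses the Poincar\'e bundle only for $\pair{\opar,\oqar}$. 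Your route would trade those two citations for normal-function positivity, at the cost of funnelling every case through $\pair{\omar,\omar}$.

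That is exactly where the gap is, and for $g\geq 2$ it is fatal, since each of $\pair{\opar,\omar}$, $\pair{\oqar,\omar}$ and $\pair{\opar,\oqar}$ in your scheme carries a contribution from $\pair{\omar,\omar}$ with non-zero coefficient. The metrized Mumford isomorphism gives, up to normalization, $dd^c\delta_F = 12\,c_1(\bar{\lambda}_{L^2}) - c_1(\pair{\omar,\omar})$ on $C^0$, a difference of two $(1,1)$-forms which are in general both unbounded near the cusps. Your proposed splitting of $\delta_F$ as a difference of subharmonic functions with logarithmic growth requires the curvature of the remainder, after subtracting the leading $\sum_i c_i\log|t_i|^{-1}$ terms, to be bounded below by a smooth form on $C$; boundedness of that remainder as a function gives no control on its $dd^c$, and the subleading asymptotics of $\delta_F$ near the boundary of moduli contain terms of $\log\log|t|^{-1}$ type whose curvature is unbounded and whose sign is precisely the delicate point. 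The one natural global splitting of $dd^c\delta_F$ into semipositive pieces takes for the subtracted part $-\log\|s\|$ with $s$ a rational section of $\pair{\omega,\omega}$, and the subharmonicity of that function is exactly the semipositivity of $\pair{\omar,\omar}$ you are trying to prove, so the obvious completion of your sketch is circular. Some positivity input equivalent to \cite[Proposition~4.7]{bhdj} is genuinely needed here and is not supplied. (A minor further point: for $g=1$ the class $K$ is trivial in the Jacobian, so the associated normal function is the zero section and Theorem \ref{lear_exists} gives nothing; that case should instead be handled by observing that $\omar$ is isometrically a pullback from $C^0$, whence $\pair{\omar,\omar}$ is canonically trivial as a metrized line bundle.)
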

\begin{proof} We recall that by Theorem~\ref{leararakelov} or Theorem~\ref{maindj} each of the Deligne pairings $\pair{ \opar, \omar }$, $\pair{\opar,\oqar}$ and $\pair{\omar,\omar}$ has a Mumford-Lear extension on $C$. 
Now it is shown in \cite[Proposition~4.7]{bhdj} and \cite[Corollary~4.8]{bhdj} that $ \pair{\omar,\omar }$ and $\pair{\opar,\omar} $
are semipositive on $C^0$. We therefore immediately obtain the statement of the lemma for these two Deligne pairings. As we have a canonical isometry $ \pair{\opar,\opar} \isom \pair{\opar,\omar}^{\otimes -1} $ by adjunction (\ref{adjunction}), the lemma also follows for the pairing $ \pair{\opar,\opar}$. 

It remains to deal with the Deligne pairing $\pair{\opar,\oqar}$. Following the end of Section \ref{sec:metric}, let $J \to C^0$ be the jacobian fibration associated to the family of curves $S \to C^0$. Let $\bb$ be the Poincar\'e bundle on $J$, equipped with its canonical translation-invariant metric, and let $\delta \colon C^0 \to J$ denote the Abel-Jacobi map determined by the sections $P, Q$. By (\ref{deligne_poincare}) we have a canonical isometry
\[  \delta^*\bb \isom \pair{ \oo(P-Q), \oo(P-Q) }^{\otimes -1}  \]
of smooth hermitian line bundles on $C^0$. By expanding the Deligne brackets on the right hand side and using again adjunction (\ref{adjunction}) we find a canonical isometry
\[ \pair{\opar,\oqar}^{\otimes 2} \isom \delta^*\bb \otimes \pair{\opar,\omar}^{\otimes -1} \otimes \pair{\oqar,\omar}^{\otimes -1} \, . \]
Now by Theorem \ref{lear_exists}, the smooth hermitian line bundle $\delta^*\bb$ is semipositive, and has a Mumford-Lear extension over $C$. As we saw earlier the same properties hold for the smooth hermitian line bundle $ \pair{\opar,\omar} \otimes \pair{\oqar,\omar}   $.
We conclude that $\pair{\opar,\oqar}^{\otimes 2}$ can be written as a quotient of two smooth hermitian line bundles, both of which are semipositive and have a Mumford-Lear extension over $C$.
\end{proof}

\section{Infinite Mumford extensions} \label{sec:infinite}

The purpose of this section is to discuss an interpretation of Theorem \ref{chernweil} as a Chern-Weil type result for line bundles with logarithmic growth on a certain ``non-quasicompact compactification'' $V$ of the surface $S$.  Let $\I_0$ be the cofiltered category of all essential blow-ups of the minimal regular model of $S$. For all $\xx \in \I_0$ we write $U(\xx)=\xx \setminus B(\xx)^\sing$ for the complement of the singular locus of the boundary divisor. Then we note that for all maps $\varphi \colon \xx' \to \xx$ in $\I_0$ we have $U(\xx') \supset \varphi^{-1}U(\xx)$. Indeed, by definition each $\varphi \colon \xx' \to \xx$ in $\I_0$ is a finite composition of blow-ups in boundary singular points, and thus maps boundary singular points into boundary singular points.  For $\varphi \colon \xx' \to \xx$ a map in $\I_0$ we denote by $i \colon U(\xx) \hookrightarrow U(\xx')$ the induced map in the opposite direction. As $B(\xx')^\sing$ is either empty or has codimension two Lemma~\ref{compatibleI} implies that the inverse system $\Pic_\qq(\xx)_{\xx \in \I_0}$ with transition maps $\varphi_*$ and the inverse system $\Pic_\qq(U(\xx))_{\xx \in \I_0}$ with transition maps $i^*$ are canonically isomorphic. In particular, we have a canonical isomorphism
\[  \psi \colon \varprojlim_{\xx \in \I_0} \Pic_\qq(U(\xx))  \isom \bPic_\qq(\I_0)   \]
of $\qq$-vector spaces, where on the left hand side the transition maps are given by pullback of $\qq$-line bundles. Now let $V = \varinjlim_{\xx \in \I_0} U(\xx)$, the direct limit taken in the category of locally ringed spaces. Then it is readily verified that $V$ is a $\cc$-scheme, containing $S$ naturally as an open dense subscheme, with smooth boundary $V \setminus S$, and with the set of irreducible components of the boundary $V \setminus S$ canonically in bijection with the countably infinite set $\Gamma_\qq=\varinjlim V(\xx)$. It is natural to define a $\qq$-line bundle on $V$ to be the datum of an open cover $\{ V_i \}_{i \in I}$ of $V$ with quasicompact schemes $V_i$ plus $\qq$-line bundles on all $V_i$, together with isomorphisms of $\qq$-line bundles on all overlaps, satisfying the cocycle condition on triple intersections. Write $\Pic_\qq(V)$ for the group of $\qq$-line bundles on $V$. 
We have a natural map
 \[  \rho \colon \Pic_\qq(V) \to \varprojlim_{\xx \in \I_0} \Pic_\qq(U(\xx)) \]
of $\qq$-vector spaces, given by restriction of $\qq$-line bundles along the inclusions $U(\xx) \subset V$. We contend that the map $\rho$ is actually an isomorphism. First of all, a $\qq$-line bundle on $V$ is determined by its restrictions to all $U(\xx)$ which shows that $\rho$ is injective. Next, to give an element of $\varprojlim \Pic_\qq(U(\xx))$ is to give $\qq$-line bundles $\ll_\xx$ on all $U(\xx)$ compatible with all inclusions $U(\xx) \hookrightarrow U(\xx')$ coming from maps $\xx' \to \xx$ in $\I_0$. For  maps $\xx'' \to \xx$ and $\xx'' \to \xx'$ in $\I_0$ the overlap $U(\xx) \cap U(\xx')$ can be viewed inside $U(\xx'')$ and we see that for $\ll_\xx$ and $\ll_{\xx'}$ on $U(\xx)$ and $U(\xx')$ coming from $\ll_{\xx''}$ on $U(\xx'')$ the restrictions to  $U(\xx) \cap U(\xx')$ coincide. Now via the composed isomorphism $\psi \circ \rho$ the partial intersection pairing $\cdot$ on $\bPic_\qq(\I_0)$ can be viewed as a partial intersection pairing $\cdot$ on $\Pic_\qq(V)$.

We continue with the notation introduced in Theorem \ref{chernweil}. Theorem~\ref{admissibility} implies that for each $\xx \in \I_0$  the smooth hermitian line bundles $\bar{\ll}_1 \otimes \pi^*\bar{\mm}_1$ and $\bar{\ll}_2 \otimes \pi^*\bar{\mm}_2$ both have logarithmic growth on $U(\xx)$.   Then $\bar{\ll}_1 \otimes \pi^*\bar{\mm}_1$ and $\bar{\ll}_2 \otimes \pi^*\bar{\mm}_2$ both have logarithmic growth on $V= \varinjlim U(\xx)$, and their Mumford canonical extensions over all $U(\xx)$ glue together to give $\qq$-line bundles $\left[\bar{\ll}_1 \otimes \pi^*\bar{\mm}_1,V\right]$ and $\left[\bar{\ll}_2 \otimes \pi^*\bar{\mm}_2 , V\right]$ on the scheme $V$.  We can then rewrite equality  (\ref{inters_is_integral}) in Theorem~\ref{chernweil} more conceptually as
\begin{equation} \label{proposal} \left[\bar{\ll}_1 \otimes \pi^*\bar{\mm}_1,V\right] \cdot \left[\bar{\ll}_2 \otimes \pi^*\bar{\mm}_2 , V\right] = 
\int_V c_1(\bar{\ll}_1 \otimes \pi^*\bar{\mm}_1 ) \wedge c_1(\bar{\ll}_2 \otimes \pi^*\bar{\mm}_2) \, . 
\end{equation}
Thus the scheme $V$ should be viewed as a ``compactification'' of $S$ on which Chern-Weil theory holds naturally for the line bundles $\bar{\ll}_1 \otimes \pi^*\bar{\mm}_1$ and $\bar{\ll}_2 \otimes \pi^*\bar{\mm}_2$ with logarithmic growth.

\section{Modular elliptic surfaces} \label{sec:modular}

Let $\Gamma \subset \SL_2(\zz)$  be a subgroup of finite index, and let $Y=Y(\Gamma)$ be the associated modular curve. We assume that $\Gamma$ acts without fixed points on the upper half plane~$\HH$ and that $Y$ has no cusps of the second kind. In this final section we make the result of Theorem \ref{chernweil} explicit in the case that $S$ is the total space $E=E(\Gamma)$ of the universal elliptic curve $\pi \colon E \to Y$ over $Y$. Our assumptions on $\Gamma$ imply that  $E$ has semistable reduction over the complete modular curve $X=X(\Gamma)$. 

Let $O \colon Y \to E$ denote the zero-section of $\pi \colon E \to Y$. Following \cite{bkk} let $\Lbar $ denote the line bundle $L=\oo_{E}(8 O )$ on $E$ equipped with the hermitian metric that gives the global section $\vartheta_{1,1}^8$ of $L$ its Petersson metric $\|\cdot\|_{\mathrm{Pet}}$. Our assumptions on $\Gamma$ imply that the metric on $\Lbar$ is smooth. Let  $\llbar = \oo_{E}(8O)_{\mathrm{Ar}} $ denote the line bundle $L$ equipped with the canonical Arakelov metric, let $\bar{\lambda}_{Y}$ denote the Hodge bundle on $Y$ equipped with the Petersson metric, and let  $\mmbar=\bar{\lambda}_{Y}^{\otimes 4}$. By Mumford's results in \cite{hi} we have that $\bar{\lambda}_{Y}$ is a good hermitian line bundle on $X$.  
\begin{prop} \label{isom} There exists an isometry
\[ \Lbar \isom \llbar \otimes \pi^*\mmbar  \]
of smooth hermitian line bundles on $E$.
\end{prop}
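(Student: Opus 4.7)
My plan is to prove the isometry by computing both metrics on a common rational section. Via the Kodaira--Spencer identification $\omega_{E/Y} \isom \pi^*\lambda_Y$ I would identify the underlying line bundles of $\Lbar$ and of $\llbar \otimes \pi^*\mmbar$ by an appropriate (possibly multi-valued) trivialization of $\lambda_Y^{\otimes 4}$ using a power of the Dedekind eta function, so that $\vartheta_{1,1}^8$ becomes a rational section of both. The isometry then reduces to the pointwise equality of smooth positive functions on $E$,
\[
\|\vartheta_{1,1}^8(z,\tau)\|^2_{\mathrm{Pet}} = \|\vartheta_{1,1}^8(z,\tau)\|^2_{\Ar} \cdot \|\eta(\tau)^8(dz)^{\otimes 4}\|^2_{\mathrm{Pet}}.
\]

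First I would note that, restricted to any fibre $E_\tau$, both sides give smooth hermitian metrics on $\oo_E(8O)|_{E_\tau}$ with the same first Chern form $8\mu_\tau$: for the Arakelov side this is the defining property of $\oo(P)_{\Ar}$, and for the Petersson side the modular weight factor $(\Im\tau)^4$ is constant along $E_\tau$. Hence the ratio $f(z,\tau):=\|\vartheta_{1,1}^8\|^2_{\mathrm{Pet}}/\|\vartheta_{1,1}^8\|^2_{\Ar}$ is constant on each fibre and so equals a function $f(\tau)$ pulled back from $Y$. To compute $f(\tau)$ I would use the curvature equation $\partial\bar\partial g_{\Ar}(z,0) = \pi i(\mu-\delta_0)$ together with the quasi-periodicity of $\vartheta_{1,1}$ to force the ansatz
\[
g_{\Ar}(z,0) = \log|\vartheta_{1,1}(z,\tau)| - \pi(\Im z)^2/\Im\tau + C(\tau),
\]
and determine $C(\tau)$ from $\int_{E_\tau} g_{\Ar}(z,0)\,\mu(z)=0$. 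Expanding $\vartheta_{1,1}$ by the Jacobi triple product and applying Jensen's formula to each integral of the form $\int_0^1 \log|1-ae^{2\pi is}|\,ds$, one obtains $\int_{E_\tau}\log|\vartheta_{1,1}|\,\mu = \pi\Im\tau/3 + \log|\eta(\tau)|$, and therefore $C(\tau) = -\log|\eta(\tau)|$. Raising the resulting formula $\|\vartheta_{1,1}\|^2_{\Ar} = |\vartheta_{1,1}|^2 e^{-2\pi(\Im z)^2/\Im\tau}/|\eta(\tau)|^2$ to the eighth power and comparing with the Petersson formula $\|\vartheta_{1,1}^8\|^2_{\mathrm{Pet}} = (\Im\tau)^4|\vartheta_{1,1}|^{16}e^{-16\pi(\Im z)^2/\Im\tau}$ yields $f(\tau) = |\eta(\tau)|^{16}(\Im\tau)^4$, which is exactly the Petersson norm squared of $\eta^8(dz)^{\otimes 4}$ as a section of $\mmbar$.

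The only substantive step is the classical evaluation $C(\tau) = -\log|\eta(\tau)|$, equivalently the formula for $\int_{E_\tau}\log|\vartheta_{1,1}|\,\mu$; the remainder is bookkeeping of $2\pi$'s and of powers of $\Im\tau$. Since the claim is an isometry of smooth hermitian line bundles on the surface $E$, the auxiliary $\eta^8$-trivialization used to identify underlying bundles is immaterial provided the cocycle conditions are checked, which is routine under the standing hypotheses on $\Gamma$.
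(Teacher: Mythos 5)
Your argument is correct and follows the same route as the paper: both reduce the isometry to the classical fiberwise identity $\log \|\vartheta_{1,1}\|_{\mathrm{Pet}}(z,\tau) = g_\tau(0,z) + \log\|\eta\|_{\mathrm{Pet}}(\tau)$, combined with the observation that a suitable power of $\eta$ trivializes the relevant power of the Hodge bundle so that the underlying line bundles can be identified. The only difference is that the paper obtains this identity by citing \cite[Section~7]{fa}, whereas you derive it from scratch: the curvature comparison correctly shows that the ratio of the two metrics is constant along each fibre (the singular parts at $O$ and the fibrewise Chern forms agree), your ansatz for $g_\tau(0,z)$ does satisfy the defining $\deldelbar$-equation and is doubly periodic, and the Jensen/triple-product evaluation $\int_{E_\tau}\log|\vartheta_{1,1}|\,\mu = \pi\Im\tau/3+\log|\eta(\tau)|$ checks out, giving $C(\tau)=-\log|\eta(\tau)|$ and hence $f(\tau)=|\eta(\tau)|^{16}(\Im\tau)^4=\|\eta^8(dz)^{\otimes 4}\|^2_{\mathrm{Pet}}$. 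So your proof is a self-contained unpacking of the paper's citation; nothing is gained or lost mathematically, though your version has the merit of making the normalization computation explicit.
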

\begin{proof} Let $\eta(\tau)$ for $\tau \in \HH$ be Dedekind's eta-function, and let $g_\tau$ denote the Arakelov-Green's function associated to the origin on the elliptic curve $\cc/(\zz+\tau \zz)$. By \cite[Section~7]{fa} we have that for $(z,\tau) \in \cc \times \mathbb{H}$ the equality
\[ \log \|\vartheta_{1,1}\|_{\mathrm{Pet}}(z,\tau)  = g_\tau(0,z)  + \log \|\eta\|_{\mathrm{Pet}}(\tau)  \]
is satisfied. The proposition follows by observing that $\eta(\tau)^{24}=\Delta(\tau)$ is a global section of the line bundle $\lambda_{Y}^{\otimes 12}$. 
\end{proof}
We find the following result as a special case of Theorems \ref{integrability} and \ref{chernweil}. Let $\I$ denote the category of all regular \emph{nc}-models of $E$ over $X$.
\begin{thm} \label{modular} The smooth hermitian line bundle $\Lbar$ admits all Mumford-Lear extensions in the category $\I$. The resulting b-Mumford-Lear extension $ \left[ \Lbar \right] \in  \bPic_\qq(\I) $ is integrable, and the equality
\begin{equation} \label{cwth_ell_general} \left[ \Lbar \right] \cdot \left[ \Lbar \right] = \int_{E(\Gamma)} c_1(\Lbar) \wedge c_1(\Lbar)  
\end{equation}
holds in $\qq$.
\end{thm}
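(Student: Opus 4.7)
The plan is to deduce Theorem \ref{modular} as a direct consequence of Theorems \ref{integrability} and \ref{chernweil}, using Proposition \ref{isom} to recast $\Lbar$ in the form covered by those theorems.

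First I would verify that the hypotheses of Theorems \ref{integrability} and \ref{chernweil} are satisfied in this setting. The section $O\colon Y \to E$ is a section of $\pi$, so $\oo(O)_{\Ar}$ and hence $\llbar = \oo_E(8O)_{\Ar} = \oo(O)_{\Ar}^{\otimes 8}$ fall within the class of hermitian line bundles (in fact tensor powers of $\opar$ for $P=O$) treated by the two theorems. The Hodge bundle $\bar{\lambda}_Y$ equipped with its Petersson metric is, by Mumford's theorem in \cite{hi}, a good hermitian line bundle on $X$ in the sense of Mumford, hence so is its fourth tensor power $\mmbar = \bar{\lambda}_Y^{\otimes 4}$. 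Finally, our standing assumptions on $\Gamma$ (no fixed points on $\HH$, no cusps of the second kind) imply that $\pi\colon E \to Y$ has semistable reduction over $X$.

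Next I would invoke Proposition \ref{isom}, which provides the isometry $\Lbar \isom \llbar \otimes \pi^*\mmbar$ of smooth hermitian line bundles on $E$. Because the construction of Mumford-Lear extensions and b-Mumford-Lear extensions is intrinsic to the smooth hermitian line bundle (and in particular preserved under isometry), this isometry identifies the existence of all Mumford-Lear extensions of $\Lbar$ in $\I$ with that of $\llbar \otimes \pi^*\mmbar$, and identifies the b-Mumford-Lear extension $[\Lbar]$ in $\bPic_\qq(\I)$ with $[\llbar \otimes \pi^*\mmbar]$. Similarly, the first Chern forms coincide, so the integral on the right-hand side of (\ref{cwth_ell_general}) equals $\int_E c_1(\llbar \otimes \pi^*\mmbar)^{\wedge 2}$.

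Applying Theorem \ref{integrability} then gives the existence of all Mumford-Lear extensions of $\Lbar$ and hence the b-Mumford-Lear extension $[\Lbar] \in \bPic_\qq(\I)$. Applying Theorem \ref{chernweil} (with $\bar{\ll}_1 = \bar{\ll}_2 = \llbar$ and $\bar{\mm}_1 = \bar{\mm}_2 = \mmbar$) yields integrability of the pair $([\Lbar],[\Lbar])$ and the desired equality (\ref{cwth_ell_general}). No genuine obstacle remains; the only point requiring care is the bookkeeping check that the isometry of Proposition \ref{isom} genuinely transports the constructions of Sections~\ref{sec:ml}--\ref{sec:comp_ess} (Mumford-Lear extension, associated b-divisor, intersection pairing) from one line bundle to the other, which is immediate since these constructions depend only on the underlying metric data.
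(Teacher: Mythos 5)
Your proposal is correct and matches the paper's own treatment: the paper presents Theorem \ref{modular} precisely as a special case of Theorems \ref{integrability} and \ref{chernweil} via the isometry of Proposition \ref{isom}, after noting that $\bar{\lambda}_Y$ is good on $X$ by Mumford's results and that the assumptions on $\Gamma$ give semistable reduction. The only bookkeeping point, which you flag correctly, is that $\llbar=\oo(O)_{\Ar}^{\otimes 8}$ is a tensor power of $\opar$ rather than $\opar$ itself, which is handled by the multilinearity of all the constructions involved.
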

It is instructive to compute the intersection product  $\left[ \Lbar \right] \cdot \left[ \Lbar \right]$ explicitly and thus, via  (\ref{cwth_ell_general}), to generalize formula (\ref{integral}) due to Burgos Gil, Kramer and K\"uhn to the case of general $\Gamma$. Let $\ee$ denote the minimal regular model of $E$ over $X$.
Let $\Delta$ be the discriminant divisor (i.e., divisor of cusps) on $X$ and put $ d_\Gamma = \deg \Delta $. We then have $d_\Gamma=12 \deg \lambda_X$ where $\lambda_X$ is the Hodge bundle of $\ee$ over $X$.
The following result was proved by the first named author by a different method in his master thesis \cite{je}. 
\begin{thm} The equality
\[  \left[ \Lbar \right] \cdot \left[ \Lbar \right] =  \frac{16}{3}d_\Gamma \]
holds in $\qq$.
\end{thm}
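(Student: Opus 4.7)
The plan is to combine Proposition~\ref{isom} with the main results of Section~\ref{sec:comp_ess} to reduce $[\bar{L}]\cdot[\bar{L}]$ to the degree of a Mumford--Lear extension on $X$, and then evaluate the two resulting contributions explicitly via the admissible formalism of Section~\ref{sec:admis}.

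First, by Proposition~\ref{isom} I write $\bar{L} \cong \bar{\ll} \otimes \pi^*\bar{\mm}$ with $\bar{\ll}=\oo_E(8O)_{\mathrm{Ar}}$ and $\bar{\mm}=\bar{\lambda}_Y^{\otimes 4}$. Taking degrees in Theorem~\ref{pairing_is_extension} and using bilinearity of the Deligne pairing (with $\pair{\pi^*\bar{\mm},\pi^*\bar{\mm}}$ trivial) yields
\[
[\bar{L}]\cdot[\bar{L}]
= \deg\bigl[\pair{\bar{\ll},\bar{\ll}},X\bigr] + 2\,(\deg \ll)\,\deg\bigl[\bar{\mm},X\bigr].
\]
Here $\deg\ll=8$ is the relative degree of $\oo_E(8O)$. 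Since $\bar{\lambda}_Y$ is good on $X$ by \cite{hi}, one has $[\bar{\mm},X]=\lambda_X^{\otimes 4}$, and the classical identity $d_\Gamma = 12\deg\lambda_X$ gives $\deg[\bar{\mm},X] = d_\Gamma/3$. The second summand is therefore $16 d_\Gamma/3$.

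Next I would show that $\deg[\pair{\bar{\ll},\bar{\ll}},X] = 0$. By multilinearity, $\pair{\bar{\ll},\bar{\ll}} \cong \pair{\oo(O)_{\mathrm{Ar}},\oo(O)_{\mathrm{Ar}}}^{\otimes 64}$, and Theorems~\ref{characterize} and~\ref{pairing_is_extension} identify its degree with $64\,(\oo(O)_a\cdot\oo(O)_a)$ computed in $\bPic_\qq(\I_0)$. Equation~(\ref{inters_adm}) with $P=Q=O$ then gives
\[
\oo(O)_a\cdot\oo(O)_a = \bar{O}\cdot\bar{O} + g_\mu(x_O,x_O),
\]
where $x_O$ is the reduction of the section $O$. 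Adjunction applied to $\bar{O}$ in the minimal regular model $\ee$, together with the standard identity $O^*\omega_{\ee/X}\cong\lambda_X$, yields $\bar{O}\cdot\bar{O} = -\deg\lambda_X = -d_\Gamma/12$.

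For the Green's-function term, I use that under the hypotheses on $\Gamma$ the fibre of $\ee\to X$ over each cusp $c$ is a N\'eron $N_c$-gon whose components all have multiplicity one, so each connected component $\Gamma_c$ of the essential skeleton is an isometric cycle of total length $N_c$. Since $K_{\Gamma_c}=0$ and the fibre genus is $h=1$, the admissible measure on $\Gamma_c$ equals the canonical measure, which on a cycle of length $N_c$ reduces to the uniform measure $dy/N_c$ on $\rr/N_c\zz$. A short ODE computation (solve $-g''(y)=\delta_0(y)-1/N_c$ periodically and normalise $\int g = 0$) then yields $g_{\mu_c}(x_O^c,x_O^c) = N_c/12$. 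Summing over cusps, $g_\mu(x_O,x_O) = \sum_c N_c/12 = d_\Gamma/12$, which cancels $\bar{O}\cdot\bar{O}$ exactly, giving $\oo(O)_a\cdot\oo(O)_a = 0$ and hence $[\bar{L}]\cdot[\bar{L}] = 16 d_\Gamma/3$. I expect the main technical point to be this Green's-function calculation on a cycle; the rest is bookkeeping with Proposition~\ref{isom}, Theorem~\ref{pairing_is_extension}, bilinearity of Deligne pairings, and the adjunction formula.
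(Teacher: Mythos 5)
Your proposal is correct and follows essentially the same route as the paper: split off the $16\deg[\bar{\mm},X]=\frac{16}{3}d_\Gamma$ contribution, then show $[\bar{\ll}]\cdot[\bar{\ll}]=0$ via the admissible formalism by cancelling $\oo(\bar{O})\cdot\oo(\bar{O})=-\deg\lambda_X$ (adjunction plus $\omega=\pi^*\lambda_X$) against $\sum_{\mathfrak{c}} g_{\mathfrak{c},\mu}(o,o)=\frac{1}{12}\deg\Delta=\deg\lambda_X$. The only cosmetic difference is that you derive $g_{\mu}(o,o)=\ell_{\mathfrak{c}}/12$ on the cycle by solving the Laplace equation directly, whereas the paper quotes Zhang's explicit formula for the Green's function on a circle.
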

\begin{proof}
First of all we have
\[    \left[ \Lbar \right] \cdot \left[ \Lbar \right] =  \left[ \llbar \right] \cdot \left[ \llbar \right] + 16 \deg  \left[ \mmbar,X \right]  \]
since $\llbar$ has relative degree~$8$ over $Y$.
Now we have $\left[ \mmbar,X \right]=4\,\lambda_X$ by  \cite[p.~225]{fc} and this gives
\[ \left[ \Lbar \right] \cdot \left[ \Lbar \right]  =  \left[ \llbar \right] \cdot \left[ \llbar \right] + 64 \deg \lambda_X = \left[ \llbar \right] \cdot \left[ \llbar \right]  + \frac{16}{3}d_\Gamma \, . \]
We finish by showing that $\left[ \llbar \right] \cdot \left[ \llbar \right] =0 $. First of all, by Theorem~\ref{characterize} we have
\[  \left[ \llbar \right]  = 8 \, \oo(O)_a \, . \]
Write $o=R(\bar{O})$. By definition of $\oo(O)_a$ we have
\[ \oo(O)_a = \oo(\bar{O}) \otimes \oo(\sum_{\mathfrak{c} \in \mathfrak{C}} g_{\mathfrak{c},\mu}(o,\cdot))\, , \]
where $\mathfrak{C}$ denotes the reduced divisor of cusps of $X$, and $g_{\mathfrak{c},\mu}$ is Zhang's Arakelov-Green's function on the connected metrized graph $\Gamma_{\mathfrak{c}}$ obtained from taking the direct limit, over all models in $\I_0$, of the dual graphs of the fiber over $\mathfrak{c} \in \mathfrak{C}$.  By equation (\ref{inters_adm})  we have
\[  \oo(O)_a \cdot \oo(O)_a = \oo(\bar{O}) \cdot \oo(\bar{O}) + \sum_{\mathfrak{c} \in \mathfrak{C}} g_{c,\mu}(o,o) \, . \]
Let $\mathfrak{c} \in \mathfrak{C}$ be a cusp. Then the metrized graph $\Gamma_\mathfrak{c}$ is a circle with origin~$o$ and of length $\ell_\mathfrak{c} = \ord_\mathfrak{c}(\Delta)$, the multiplicity of the discriminant divisor $\Delta$ at $c$.
We have by \cite[Section a.8]{zh} for $x, y \in \Gamma_\mathfrak{c}$ explicitly that
\[ g_{\mathfrak{c},\mu}(x,y) = \frac{1}{2\ell_\mathfrak{c}} (t(x)-t(y))^2 - \frac{1}{2}|t(x)-t(y)| + \frac{1}{12}\ell_\mathfrak{c} \, , \]
where $t$ is a euclidean coordinate on $\Gamma$ with $0 \leq t < \ell_\mathfrak{c}$. We find $g_{\mathfrak{c},\mu}(o,o)=\frac{1}{12}\ell_\mathfrak{c}$ and hence
\[ \sum_{\mathfrak{c} \in \mathfrak{C}} g_{\mathfrak{c},\mu}(o,o) = \frac{1}{12} \sum_{\mathfrak{c} \in \mathfrak{C}}  \ell_\mathfrak{c} = \frac{1}{12} \deg \Delta = \deg \lambda_X \, . \]
On the other hand, let $\omega$ denote the relative dualizing sheaf of the minimal regular model $\ee$ of $E$ over $X$. Then one has
\[  \oo(\bar{O}) \cdot \oo(\bar{O}) = \oo_{\ee}(O)\cdot\oo_{\ee}(O)  = - \oo_{\ee}(O) \cdot \omega= -\deg \lambda_X \, ,  \] 
where the middle equality holds by adjunction and where the last equality holds since $\omega = \pi^* \lambda_X$.
We conclude that $ \oo(O)_a \cdot \oo(O)_a=0$ and this completes the proof.
\end{proof}
\begin{cor}
The equality
\[ \int_{E(\Gamma)} c_1(\Lbar) \wedge c_1(\Lbar) = \frac{16}{3}d_\Gamma  \]
holds in $\qq$.
\end{cor}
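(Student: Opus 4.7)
The statement is an immediate consequence of two results already in hand, so my plan is simply to combine them. First I would invoke Theorem \ref{modular}, which is the special case of Theorem \ref{chernweil} applied via the isometry $\Lbar \isom \llbar \otimes \pi^*\mmbar$ of Proposition \ref{isom}; this yields the Chern--Weil type identity
\[
 \left[ \Lbar \right] \cdot \left[ \Lbar \right] = \int_{E(\Gamma)} c_1(\Lbar) \wedge c_1(\Lbar)
\]
in $\qq$, with the left hand side computed as an intersection of b-Mumford-Lear extensions in $\bPic_\qq(\I)$.

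Next I would substitute the explicit evaluation of the b-self-intersection provided by the preceding theorem, namely
\[
 \left[ \Lbar \right] \cdot \left[ \Lbar \right] = \frac{16}{3} d_\Gamma \, .
\]
Chaining these two equalities yields the asserted formula $\int_{E(\Gamma)} c_1(\Lbar) \wedge c_1(\Lbar) = \frac{16}{3} d_\Gamma$ in $\qq$. There is essentially no obstacle here; all of the analytic and combinatorial work (in particular the split $\left[ \Lbar \right] \cdot \left[ \Lbar \right] = \left[ \llbar \right]\cdot\left[ \llbar \right] + 16 \deg \left[ \mmbar, X\right]$, the vanishing of $\left[ \llbar \right] \cdot \left[ \llbar \right]$ via Zhang's admissible Green's function on a circle, and the identification $\left[ \mmbar, X\right] = 4\lambda_X$ from Faltings--Chai) has been carried out in the preceding proof, and the Chern-Weil identity is itself the content of Theorem \ref{chernweil}. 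The corollary therefore requires no further argument beyond citing these two results.
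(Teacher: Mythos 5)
Your proposal is correct and matches the paper's (implicit) argument exactly: the corollary is obtained by chaining the Chern--Weil identity of Theorem \ref{modular} with the explicit evaluation $\left[\Lbar\right]\cdot\left[\Lbar\right]=\frac{16}{3}d_\Gamma$ from the preceding theorem. Nothing further is needed.
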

For $\Gamma = \Gamma(N)$ we have $d_\Gamma = Np_N$ where $p_N$ is the number of cusps of $X(N)$ and thus we re-obtain (\ref{integral}).

\vspace{0.5cm}

\noindent Addresses of the authors:\\ \\

\noindent Michiel Jespers \\
Email: \verb+jespersc@xs4all.nl+ \\ \\

\noindent Robin de Jong \\
Mathematical Institute  \\
Leiden University \\
The Netherlands  \\
Email: \verb+rdejong@math.leidenuniv.nl+

\end{document}